\renewcommand{\theequation}{\arabic{equation}}
\newcommand{\ds}{\displaystyle}
\newcommand{\E}{\text{E}}
\newcommand{\X}{\mathsf{X}}
\newcommand{\real}{{\mathbb R}}
\newcommand\numberthis{\addtocounter{equation}{1}\tag{\theequation}}
\theoremstyle{plain}
\newtheorem{theorem}{Theorem}
\newtheorem{lemma}{Lemma}
\newtheorem{corollary}{Corollary}
\theoremstyle{remark}
\newtheorem{remark}{Remark}
\newtheorem{cond}{Condition}
\newtheorem{example}{Example}
 \author{ Dootika Vats\\ Department of Statistics\\ University of
  Warwick\\ \texttt{D.Vats@warwick.ac.uk} \and James M. Flegal
  \thanks{Research supported by the National Science
    Foundation.}\\ Department of Statistics\\ University of
  California, Riverside\\ \texttt{jflegal@ucr.edu} \and Galin L. Jones
  \thanks{Research supported by the National Institutes of Health and
    the National Science Foundation.}\\ School of
  Statistics\\ University of Minnesota\\ \texttt{galin@umn.edu} }
\title{Multivariate Output Analysis for Markov Chain Monte Carlo} \date{\today}
\begin{document}

\maketitle

\doublespacing
\begin{abstract}
Markov chain Monte Carlo (MCMC) produces a correlated sample for estimating expectations with respect to a target distribution. A fundamental question is when should sampling stop so that we have good estimates of the desired quantities? The key to answering this question lies in assessing the Monte Carlo error through a multivariate Markov chain central limit theorem (CLT).  The multivariate nature of this Monte Carlo error largely has been ignored in the MCMC literature. We present a multivariate framework for terminating simulation in MCMC. We define a multivariate effective sample size, estimating which requires strongly consistent estimators of the covariance matrix in the Markov chain CLT; a property we show for the multivariate batch means estimator. We then provide a lower bound on the number of minimum effective samples required for a desired level of precision. This lower bound depends on the problem only in the dimension of the expectation being estimated, and not on the underlying stochastic process. This result is obtained by drawing a connection between terminating simulation via effective sample size and terminating simulation using a relative standard deviation fixed-volume sequential stopping rule; which we demonstrate is an asymptotically valid procedure. The finite sample properties of the proposed method are demonstrated in a variety of examples.
\end{abstract}

\section{Introduction} 
\label{sec:introduction}

Markov chain Monte Carlo (MCMC) algorithms are used to estimate expectations with respect to a probability distribution when independent sampling is difficult. Typically, interest is in estimating a vector of quantities. However, analysis of MCMC output routinely focuses on inference about complicated joint distributions only through their marginals.  This, despite the fact that the assumption of independence across components holds only rarely in settings where MCMC is relevant. Thus standard univariate convergence diagnostics, sequential stopping rules for termination, effective sample size definitions, and confidence intervals all lead to an incomplete understanding of the estimation process.  We overcome the drawbacks of univariate analysis by developing a methodological framework for multivariate analysis of MCMC output.

Let $F$ be a distribution with support $\mathcal{X}$ and $g: \mathcal{X} \to \mathbb{R}^{p}$ be an $F$-integrable function such that $\theta := \E_F g$ is of interest. If $\{X_t\}$ is an $F$-invariant Harris recurrent Markov chain, set $\{Y_t\}=\{g(X_t)\}$ and  estimate $\theta$ with $\theta_n = n^{-1} \sum_{t=1}^{n} Y_t$ since $\theta_n \to \theta$, with probability 1, as $n \to \infty$.  Finite sampling leads to an unknown \textit{Monte Carlo error}, $\theta_n - \theta$, estimating which is essential to assessing the quality of estimation. If for $\delta > 0$, $g$ has $2 + \delta$ moments under $F$ and $\{X_t\}$ is polynomially ergodic of order $m > (2 + \delta)/\delta$, an approximate sampling distribution for the Monte Carlo error is available via a Markov chain central limit theorem (CLT). That is, there exists a $p \times p$ positive definite matrix, $\Sigma$,  such that as $n \to \infty$, 
\begin{equation} \label{eq:multi_clt}
  \sqrt{n}(\theta_n - \theta) \overset{d}{\to} N_p(0, \Sigma) \; .
\end{equation}

Thus the CLT describes asymptotic behavior of the Monte Carlo error and the strong law for $\theta_n$ ensures that large $n$ leads to a small Monte Carlo error. But, how large is large enough? This question has not been adequately addressed in the literature since current approaches are based on the univariate CLT 
\begin{equation} \label{eq:uni_clt}
  \sqrt{n}(\theta_{n,i} - \theta_{i}) \overset{d}{\to} N(0, \sigma_i^2) \; \text{ as } n \to \infty,
\end{equation}
where $\theta_{n,i}$ and $\theta_i$ are the $i$th components of $\theta_n$ and $\theta$ respectively and $\sigma_i^2$ is the $i$th diagonal element of $\Sigma$. Notice that a univariate approach ignores cross-correlation across components, leading to an inaccurate understanding of the estimation process.

Many output analysis tools that rely on \eqref{eq:uni_clt} have been developed for MCMC (see \cite{atch:2011}, \cite{atch:2016}, \cite{fleg:jone:2010}, \cite{fleg:gong:2015}, \cite{gelm:rubi:1992a}, \cite{gong:fleg:2015}, and \cite{jone:hara:caff:neat:2006}). To determine termination, \cite{jone:hara:caff:neat:2006} implemented the \textit{fixed-width sequential stopping rule} where simulation is terminated the first time the width of the confidence interval for each component is small. More formally, for a desired tolerance of $\epsilon_i$ for component $i$, the rule terminates simulation the first time after some $n^* \ge 0$ iterations, for all components
\begin{equation*} \label{eq:absolute fixed}
t_{*} \dfrac{\sigma_{n,i}}{\sqrt{n}}  +n^{-1} \le \epsilon_i,
\end{equation*}
where $\sigma^{2}_{n,i}$ is a strongly consistent estimator of $\sigma_i^2$, and $t_{*}$ is an appropriate $t$-distribution quantile. The role of $n^*$ is to ensure a minimum simulation effort (as defined by the user) so as to avoid poor initial estimates of $\sigma_i^2$. This rule laid the foundation for termination based on quality of estimation rather than convergence of the Markov chain. As a consequence, estimation is reliable in the sense that if the procedure is repeated again, the estimates will not be vastly different \citep{fleg:hara:jone:2008}. However, implementing the fixed-width sequential stopping rule can be challenging since (a) careful analysis is required for choosing $\epsilon_i$ for each $\theta_{n,i}$ which can be tedious or even impossible for large $p$; (b) to ensure the right coverage probability, $t_*$ is chosen to account for multiple confidence intervals (often by using a Bonferroni correction). Thus when $p$ is even moderately large, these termination rules can be aggressively conservative leading to delayed termination; (c) simulation stops when each component satisfies the termination criterion; therefore, all cross-correlations are ignored and termination is governed by the slowest mixing components; and (d) it ignores correlation in the target distribution.

To overcome the drawbacks of the fixed-width sequential stopping rule, we propose the \textit{relative standard deviation fixed-volume sequential stopping rule} that differs from the \cite{jone:hara:caff:neat:2006} procedure in two fundamental ways; (a) it is motivated by the multivariate CLT in \eqref{eq:multi_clt} and not by the univariate CLT in \eqref{eq:uni_clt}; and (b)  it terminates simulation not by the absolute size of the confidence region, but by its size relative to the inherent variability in the problem.  Specifically, simulation stops when the Monte Carlo standard error is small compared to the variability in the target distribution. Naturally, an estimate of the Monte Carlo standard error is required and for now, we assume that $\Sigma$ can be estimated consistently. Later we will discuss procedures for estimating $\Sigma$. The relative standard deviation fixed-volume sequential stopping rule terminates the first time after some user-specified $n^* \ge 0$  iterations
\begin{equation}
\label{eq:intro_rule}
\text{Volume of Confidence Region}^{1/p} + n^{-1} < \epsilon |\Lambda_n|^{1/2p} \; ,
\end{equation}
where $\Lambda_n$ is the sample covariance matrix, $| \cdot |$ denotes determinant, and $\epsilon$ is the tolerance level. As in the univariate setting, the role of $n^*$ is to avoid premature termination due to early bad estimates of $\Sigma$ or $\Lambda$; we will say more about how to choose $n^*$ in Section~\ref{sec:multivariate_effective_sample_size}. 

\cite{wilks:1932} defines the determinant of a covariance matrix as the \textit{generalized variance}. Thus, an equivalent interpretation of \eqref{eq:intro_rule} is that simulation is terminated when the generalized variance of the Monte Carlo error is small relative to the generalized variance of $g$ with respect to $F$; that is, a scaled estimate of $|\Sigma|$ is small compared to the estimate of $|\Lambda| = |\text{Var}_{F}(Y_1)|$. We call $|\Lambda|^{1/2p}$ the \textit{relative metric}.  For $p = 1$, our choice of the relative metric reduces \eqref{eq:intro_rule} to the relative standard deviation fixed-width sequential stopping rule of \cite{fleg:gong:2015}. 

We show that if the estimator for $\Sigma$ is strongly consistent, the stopping rule in \eqref{eq:intro_rule} is asymptotically valid, in that the confidence regions created at termination have the right coverage probability as $\epsilon \to 0$. Our result of asymptotic validity holds for a wide variety of relative metrics. A different choice of the relative metric, leads to a fundamentally different approach to termination. For example, if instead of choosing $|\Lambda|^{1/2p}$ as the relative metric, we choose a positive constant, then our work provides a multivariate generalization of the absolute-precision procedure considered by \cite{jone:hara:caff:neat:2006}. 

Another standard way of terminating simulation is to stop when the number of effective samples for each component reaches a pre-specified lower bound (see \cite{atk:gray:drum:2008}, \cite{drum:ho:phill:ramb:2006}, \cite{gior:brod:jord:2015},  \cite{gong:fleg:2015}, and \cite{krus:2014} for a few examples).  We focus on a multivariate study of effective sample size (ESS) since univariate treatment of ESS ignores cross-correlations across components, thus painting an inaccurate picture of the quality of the sample. To the best of our knowledge, a multivariate approach to ESS has not been studied in the literature. We define 
\[ \text{ESS} = n \left(\dfrac{|\Lambda|}{|\Sigma|} \right)^{1/p}.\]
When there is no correlation in the Markov chain, $\Sigma = \Lambda$ and ESS $ = n$. Notice that our definition of ESS involves the ratio of generalized variances. This ratio also occurs in \eqref{eq:intro_rule} which helps us arrive at a key result; terminating according to the relative standard deviation fixed-volume sequential stopping rule is asymptotically equivalent to terminating when the estimated ESS satisfies
\[\widehat{\text{ESS}} \geq W_{p, \alpha, \epsilon}, \]
where $W_{p, \alpha, \epsilon}$ can be calculated \textit{a priori} and is a function only of the dimension of the estimation problem, the level of confidence of the confidence regions, and the relative precision desired. Thus, not only do we show that terminating via ESS is a valid procedure, we also provide a theoretically valid, practical lower bound on the number of effective samples required. 

Recall that we require a strongly consistent estimator of $\Sigma$. Estimating $\Sigma$ is a difficult problem due to the serial correlation in the Markov chain. 
\cite{vats:fleg:jones:2017} demonstrated strong consistency for a class of multivariate spectral variance estimators while \cite{dai:jone:2017} introduced multivariate initial sequence estimators and established their asymptotic validity. However, both estimators are expensive to calculate and do not scale well with either $p$ or $n$. Instead, we use the  \textit{multivariate batch means} (mBM) estimator of $\Sigma$ which is significantly faster to compute (see Section \ref{sec:discussion}) and requires weaker moment conditions on $g$ for strong consistency. Our strong consistency result weakens the conditions required in \cite{jone:hara:caff:neat:2006} for the univariate batch means (uBM) estimator. In particular, we do not require a one-step minorization and only require polynomial ergodicity (as opposed to geometric ergodicity). The condition is fairly weak since often the existence of the Markov chain CLT itself is demonstrated via polynomial ergodicity or a stronger result (see \cite{jone:2004} for a review). Many Markov chains used in practice have been shown to be at least polynomially ergodic. See \cite{acos:hube:jone:2015}, \cite{doss:hobe:2010}, \cite{hobe:geye:1998}, \cite{jarn:robe:2002}, \cite{jarn:hans:2000}, \cite{jarn:robe:2002}, \cite{john:jone:neat:2013}, \cite{john:jone:2015}, \cite{jone:robe:rose:2014}, \cite{jone:hobe:2004}, \cite{khare:hobe:2013}, \cite{marc:hobe:2004} \cite{robe:pols:1994}, \cite{tan:jone:hobe:2013}, \cite{tan:hobe:2012}, \cite{vats:2016}, among many others.

The  multivariate stopping rules terminate earlier than univariate methods since (a) termination is dictated by the joint behavior of the components of the Markov chain and not by the component that mixes the slowest (b) using the inherent multivariate nature of the problem and acknowledging cross-correlations leads to a more realistic understanding of the estimation process, and (c) avoiding corrections for multiple testing give considerably smaller confidence regions even in moderate $p$ problems. There are also cases where univariate methods \emph{cannot} be implemented due to large memory requirements. On the other hand, the multivariate methods are inexpensive relative to the sampling time for the Markov chain and terminate significantly earlier. We present one such example in Section \ref{sec:spatio_example} through a Bayesian dynamic spatial-temporal model.

The rest of the paper is organized as follows. In the sequel we present a motivating Bayesian logistic regression model. In Section \ref{sec:termination_rules} we formally introduce a general class of relative fixed-volume sequential termination rules. In Section \ref{sec:multivariate_effective_sample_size} we define ESS and provide a lower bound on the number of effective samples required for simulation. Our theoretical results in these sections require a strongly consistent estimator for $\Sigma$; a result we show for the mBM estimator in Section \ref{sec:multivariate_batch_means}. In Section \ref{sec:examples} we continue our implementation of the Bayesian logistic regression model and consider additional examples. We choose a vector autoregressive process of order 1, where the convergence rate of the process can be manipulated. Specifically, we construct the process in such a way that one component mixes slowly, while the others are fairly well behaved. Such behavior is often seen in hierarchical models with priors on the variance components. The next example is 
that of a Bayesian lasso where the posterior is in 51 dimensions. We also implement our output analysis methods for a fairly complicated Bayesian dynamic spatial temporal model. For this example we do not know if our assumptions on the process hold, thus demonstrating the situation users often find themselves in. We conclude with a discussion in Section \ref{sec:discussion}. 

\subsection{An Illustrative Example} 
\label{sub:illust_example}

For $i = 1, \dots, K$, let $Y_i$ be a binary response variable and $X_i = (x_{i1}, x_{i2}, \dots, x_{i5})$ be the observed predictors for the $i$th observation. Assume $\tau^2$ is known,

\begin{equation}\label{eq:logistic model}
Y_i | X_i, \beta  \overset{ind}{\sim} \text{Bernoulli} \left( \dfrac{1}{1 + e^{-X_i \beta}}  \right)\, ,~~~\text{ and } ~~~\beta  \sim N_5(0, \tau^2 I_5)\; .
\end{equation}
This simple hierarchical model results in an intractable posterior, $F$ on $\mathbb{R}^5$. The dataset used is the \texttt{logit} dataset in the \texttt{mcmc} R package. The goal is to estimate the posterior mean of $\beta$, $\E_F\beta$. Thus $g$ here is the identity function mapping to $\mathbb{R}^5$. We implement a random walk Metropolis-Hastings algorithm with a multivariate normal proposal distribution $N_5( \;\cdot \;, 0.35^2 I_5)$ where $I_5$ is the $5 \times 5$ identity matrix and the $0.35$ scaling approximates the optimal acceptance probability suggested by \cite{rob:gel:gilks}.


We calculate the Monte Carlo estimate for $\E_F \beta$ from an MCMC sample of size $10^5$. The starting value for $\beta$ is a random draw from the prior distribution. We use the mBM estimator described in Section \ref{sec:multivariate_batch_means} to estimate $\Sigma$.  We also implement the uBM methods described in \cite{jone:hara:caff:neat:2006} to estimate $\sigma_i^2$, which captures the autocorrelation in each component while ignoring the cross-correlation.  This cross-correlation is often significant as seen in Figure \ref{fig:blog_trace_acf}, and can only be captured by multivariate methods like mBM. In Figure \ref{fig:ellipse_conf_region} we present $90\%$ confidence regions created using mBM and uBM estimators for $\beta_1$ and $\beta_3$ (for the purpose of this figure, we set $p = 2$). This figure illustrates why multivariate methods are likely to outperform univariate methods. The confidence ellipse is the smallest volume region for a particular level of confidence. Thus, these confidence ellipses are likely to be preferred over other confidence regions.  
\begin{figure}[h]
\begin{center}
 \subfloat[]{
\includegraphics[width = 2.5in]{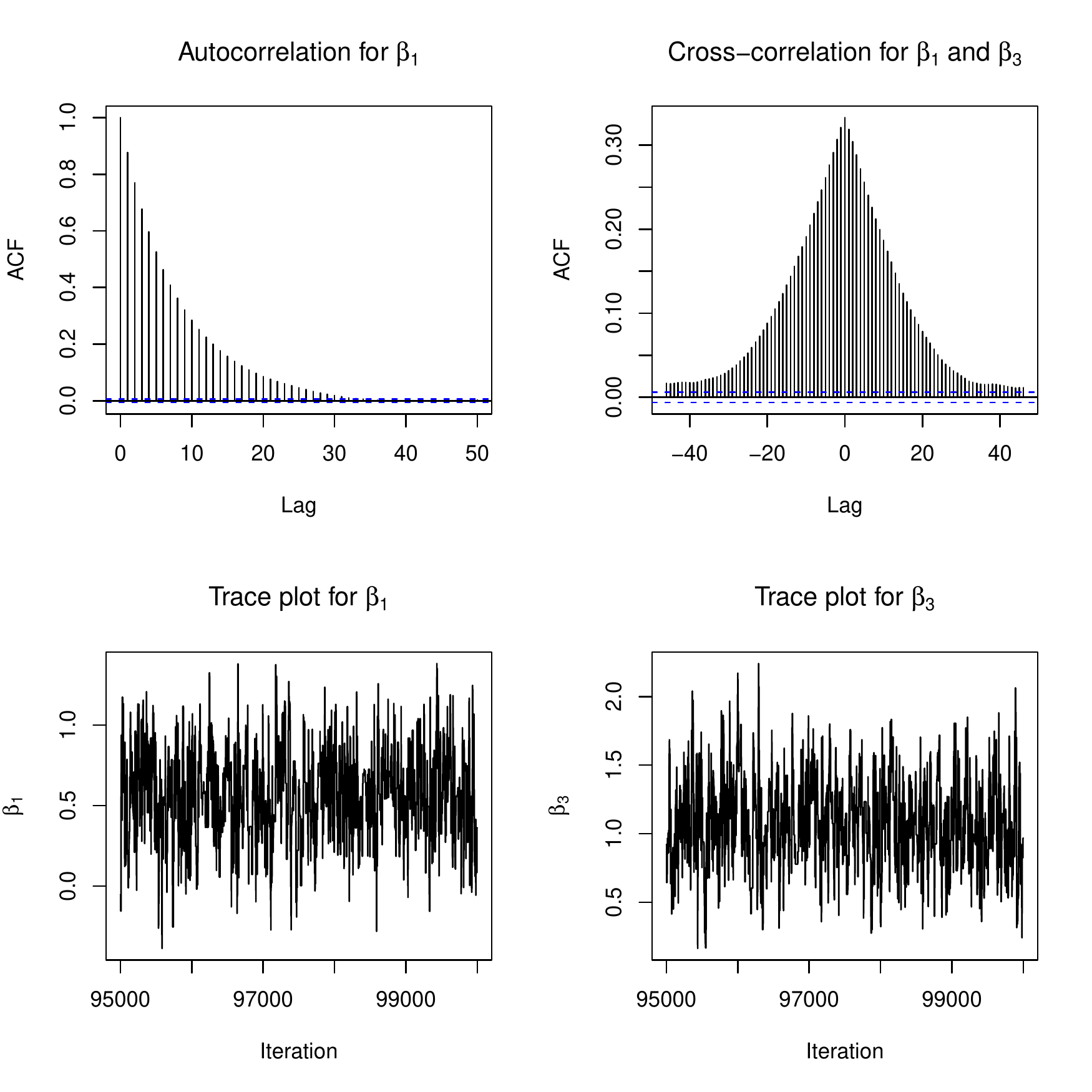} \label{fig:blog_trace_acf}}
 \subfloat[]{
 \includegraphics[width = 2.5in]{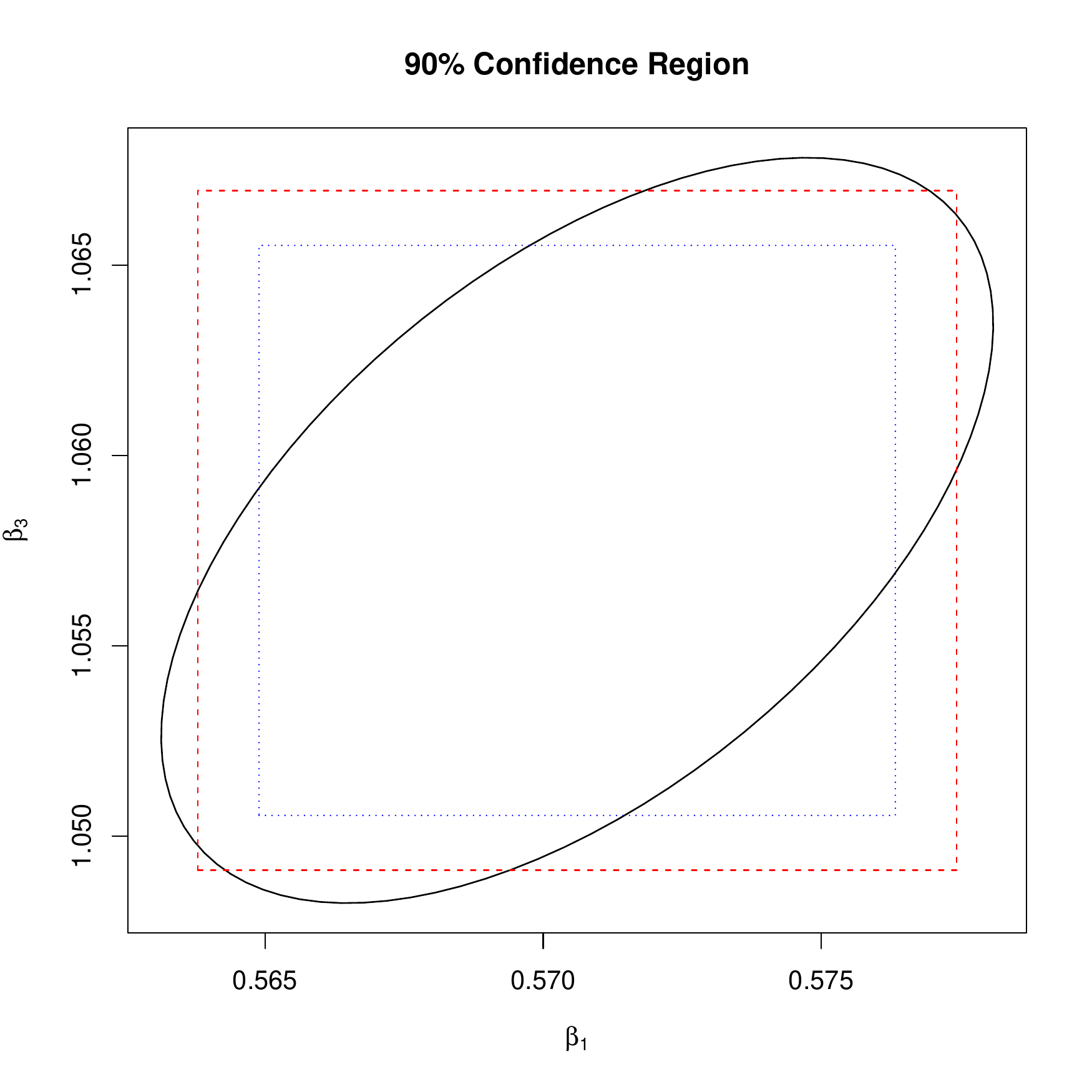} \label{fig:ellipse_conf_region}} 
 \caption{\footnotesize(a) ACF plot for $\beta_1$, cross-correlation plot between $\beta_1$ and $\beta_3$, and trace plots for $\beta_1$ and $\beta_3$. (b) Joint 90\% confidence region for $\beta_1$ and $\beta_3$. The ellipse is made using mBM, the dotted line using uncorrected uBM, and the dashed line using the uBM corrected by Bonferroni. The Monte Carlo sample size is $10^5$ for both plots.} 
\end{center}
 \end{figure}


To assess the confidence regions, we verify their coverage
probabilities over 1000 independent replications with Monte Carlo
sample sizes in $\{10^4, 10^5, 10^6\}$. Since the true posterior mean is unknown, we use $(0.5706, 0.7516, 1.0559, 0.4517, 0.6545)$ obtained by averaging over $10^9$ iterations as a proxy. For each of the 1000 replications, it was
noted whether the confidence region contained the true posterior mean. The volume of the confidence region to the $p$th
root was also observed. Table \ref{tab:blog_coverage} summarizes the
results. Note that though the uncorrected univariate methods produce
the smallest confidence regions, their coverage probabilities are far
from desirable. For a large enough Monte Carlo sample size, mBM
produces 90\% coverage probabilities with systematically lower
volume than uBM corrected with Bonferroni (uBM-Bonferroni).
\begin{table}[h]
\footnotesize 
  \caption{ \footnotesize \label{tab:blog_coverage}Volume to the $p$th ($p=5$) root and coverage probabilities for 90\% confidence regions constructed using mBM, uBM uncorrected, and uBM corrected by Bonferroni. Replications = 1000 and standard errors are indicated in parenthesis.}
\begin{center}
  \begin{tabular}{c|ccc}
  \hline
  $n$& mBM & uBM-Bonferroni & uBM \\
  \hline
  \multicolumn{4}{c}{Volume to the $p$th root} \\ 

  \hline
  1e4 & 0.062 \tiny{(7.94e-05)} &  0.066 \tiny{(9.23e-05)} & 0.046 \tiny{(6.48e-05)}\\
  1e5 & 0.020 \tiny{(1.20e-05)} &  0.021 \tiny{(1.42e-05)} & 0.015 \tiny{(1.00e-05)}\\
  1e6 & 0.006 \tiny{(1.70e-06)} &  0.007 \tiny{(2.30e-06)} & 0.005 \tiny{(1.60e-06)}\\
  \hline
  \multicolumn{4}{c}{Coverage Probabilities} \\ 
  \hline
  1e4 & 0.876 \tiny{(0.0104)} &  0.889 \tiny{(0.0099)} & 0.596 \tiny{(0.0155)}\\ 
  1e5 & 0.880 \tiny{(0.0103)} &  0.910 \tiny{(0.0090)} & 0.578 \tiny{(0.0156)}\\
  1e6 & 0.894 \tiny{(0.0097)} &  0.913 \tiny{(0.0094)} & 0.627 \tiny{(0.0153)}\\ \hline
  \end{tabular}
  \end{center}
\end{table}

Thus even simple MCMC problems produce complex
dependence structures within and across components of the
samples. Ignoring this structure leads to an incomplete
understanding of the estimation process. Not only do we gain more information about the Monte Carlo error using multivariate methods, but we also avoid using conservative Bonferroni methods.

\section{Termination Rules} 
\label{sec:termination_rules}

We consider multivariate sequential termination rules that lead to asymptotically valid confidence regions. Let $T^2_{1-\alpha, p, q} $ denote the $1-\alpha$ quantile  of a Hotelling's T-squared distribution with dimensionality parameter $p$ and degrees of freedom $q$. Throughout this section and the next, we assume $\Sigma_n$ is a strongly consistent estimator of $\Sigma$. A $100(1- \alpha) \%$ confidence region  for $\theta$ is the set
\[ 
C_\alpha(n) = \left\{ \theta \in \mathbb{R}^p: n(\theta_n - \theta)^T \Sigma^{-1}_{n} (\theta_n - \theta) < T^2_{1-\alpha, p, q} \right\}\, , 
\]
where $q$ is determined by the choice of $\Sigma_n$. Then $C_\alpha(n)$ forms an ellipsoid in $p$ dimensions oriented along the directions of the eigenvectors of $\Sigma_{n}$. The volume of $C_{\alpha}(n)$ is 
\begin{equation}
\label{eq:vol_conf}
\text{Vol} (C_\alpha(n)) = \dfrac{2\pi^{p/2} }{p \Gamma(p/2)} \left( \dfrac{T^2_{1-\alpha, p, q}}{n}  \right)^{p/2} | \Sigma_{n} |^{1/2}\, .
\end{equation}
Since $p$ is fixed and  $\Sigma_{n} \to \Sigma$ with probability 1, Vol($C_\alpha(n)$) $ \to 0$, with probability 1, as $n \to \infty$.  If $\epsilon > 0 $ and $s(n)$ is a positive real valued function defined on the positive integers, then a fixed-volume sequential stopping rule terminates the simulation at the random time 
\begin{equation}
\label{eq:glynn_whitt_rule}
  T(\epsilon) = \inf \left \{n \geq 0: \text{Vol}(C_\alpha(n))^{1/p} + s(n) \leq \epsilon   \right\}\, .
\end{equation}
\cite{glyn:whit:1992} provide conditions so that terminating at $T(\epsilon)$ yields
confidence regions that are asymptotically valid in that, as $\epsilon
\to 0, \Pr\left[\theta \in C_\alpha(T(\epsilon) ) \right] \to 1 -
\alpha$. In particular, they let $s(n)=\epsilon I(n < n^*) + n^{-1}$  which ensures simulation does not
terminate before $n^* \geq 0$ iterations. The sequential stopping rule \eqref{eq:glynn_whitt_rule} can be difficult to implement in practice since the choice of $\epsilon$ depends on the units of $\theta$, and has to be carefully chosen for every application.  We present an alternative to  \eqref{eq:glynn_whitt_rule}  which can be used more naturally and which we will show connects nicely to the idea of ESS.  

Let $\| \cdot \|$ denote the Euclidean norm. Let $K(Y,p)>0$ be an attribute of the estimation process and suppose $K_n(Y,p) > 0$ is an estimator of $K(Y,p)$; for example, take $\|\theta\| = K(Y,p)$ and $\|\theta_n\| = K_n(Y,p)$.  Set $s(n) = \epsilon K_n(Y,p)I(n < n^*) + n^{-1}$ and define 
\begin{equation*} \label{eq:universal_relative}
  T^*(\epsilon) = \inf \left\{n \geq 0: \text{Vol}(C_\alpha(n))^{1/p} + s(n) \leq \epsilon K_n(Y,p) \right\}.
\end{equation*}
We call $K(Y,p)$ the relative metric. The following result establishes asymptotic validity of this termination rule.  The proof is provided in the supplementary material.

\begin{theorem}
\label{thm:asymp_valid}
Let $g: \X \to \real^p$ be such that $\E_F\|g\|^{2 + \delta} < \infty$ for some $\delta > 0$ and let $X$ be an $F$-invariant polynomially ergodic Markov chain of order $m > (1+ \epsilon_1)(1+ 2/\delta)$ for some $\epsilon_1 > 0$. If $K_n(Y,p) \to K(Y,p)$  with probability 1 and $\Sigma_n \to \Sigma$ with probability 1, as $n \to \infty$, then, as  $\epsilon \to 0,\, T^*(\epsilon) \to \infty$ and $\Pr\left[\theta \in C_\alpha(T^*(\epsilon)) \right] \to 1 - \alpha$.
\end{theorem}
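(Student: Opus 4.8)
The plan is to treat this as a multivariate, relative-precision version of the sequential-validity theorem of \cite{glyn:whit:1992}, whose natural engine is a strong invariance principle (SIP). The moment hypothesis $\E_F\|g\|^{2+\delta}<\infty$ together with polynomial ergodicity of order $m>(1+\epsilon_1)(1+2/\delta)$ is exactly what guarantees, after enlarging the probability space, a $p\times p$ matrix $L$ with $LL^T=\Sigma$ and a standard $p$-dimensional Brownian motion $\{B(t)\}$ with
\begin{equation*}
\sum_{t=1}^{n}(Y_t-\theta) = L\,B(n) + o\!\left(\sqrt{n}\right)\quad\text{with probability }1.
\end{equation*}
I would take this SIP as the starting point, after which everything reduces to pathwise bookkeeping plus properties of Brownian motion.

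First I would settle the stopping time itself. From \eqref{eq:vol_conf},
\begin{equation*}
\text{Vol}(C_\alpha(n))^{1/p} = \left[\frac{2\pi^{p/2}}{p\,\Gamma(p/2)}\right]^{1/p}\left(\frac{T^2_{1-\alpha,p,q}}{n}\right)^{1/2}|\Sigma_n|^{1/(2p)},
\end{equation*}
which tends to $0$ with probability $1$ since $\Sigma_n\to\Sigma$, whereas the threshold $\epsilon K_n(Y,p)\to\epsilon K(Y,p)>0$ and the correction is eventually $s(n)=n^{-1}$; hence the defining infimum is attained and $T^*(\epsilon)<\infty$ almost surely for each $\epsilon$. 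For divergence as $\epsilon\to0$, note that for every fixed $n$ the left side of the defining inequality is bounded below by $s(n)\ge n^{-1}>0$, while the right side $\epsilon K_n(Y,p)\to0$ (and for $n<n^*$ the inequality fails outright because the same $\epsilon K_n$ term also appears in $s(n)$); thus the first $n$ satisfying the inequality is pushed past any fixed bound, giving $T^*(\epsilon)\to\infty$.

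Next I would pin down the order of $T^*(\epsilon)$. By minimality the defining inequality is asymptotically tight, the $n^{-1}$ correction being $o(n^{-1/2})$ and hence negligible against the volume term, so on termination
\begin{equation*}
\left[\frac{2\pi^{p/2}}{p\,\Gamma(p/2)}\right]^{1/p}\left(\frac{T^2_{1-\alpha,p,q}}{T^*(\epsilon)}\right)^{1/2}|\Sigma_{T^*(\epsilon)}|^{1/(2p)} = \epsilon K_{T^*(\epsilon)}(Y,p)\,(1+o(1)).
\end{equation*}
Using $\Sigma_{T^*}\to\Sigma$, $K_{T^*}\to K(Y,p)$, and $T^2_{1-\alpha,p,q}\to\chi^2_{1-\alpha,p}$ as the degrees of freedom diverge with $T^*$ (as they do for the mBM-type estimators), this yields $T^*(\epsilon)/a(\epsilon)\to1$ with probability $1$, where $a(\epsilon)=\kappa\,\epsilon^{-2}$ for an explicit $\kappa$ depending only on $p$, $\alpha$, $|\Sigma|$, and $K(Y,p)$. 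This deterministic index is the one to which the random stopping time is asymptotically equivalent.

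Finally I would establish coverage. Dividing the SIP by $\sqrt{n}$ gives $\sqrt{n}\,\Sigma^{-1/2}(\theta_n-\theta)=(\Sigma^{-1/2}L)\,B(n)/\sqrt{n}+o(1)$ with $\Sigma^{-1/2}L$ orthogonal, so the deterministic-index limit is $N_p(0,I)$. To replace $n$ by $T^*(\epsilon)$, I would combine $T^*(\epsilon)/a(\epsilon)\to1$ with the scaling identity $B(ct)/\sqrt{c}\overset{d}{=}B(t)$ and the almost-sure uniform continuity of Brownian motion to obtain
\begin{equation*}
\frac{B(T^*(\epsilon))}{\sqrt{T^*(\epsilon)}}-\frac{B(a(\epsilon))}{\sqrt{a(\epsilon)}}\to0\quad\text{with probability }1,
\end{equation*}
whence $\sqrt{T^*(\epsilon)}\,\Sigma^{-1/2}(\theta_{T^*(\epsilon)}-\theta)\overset{d}{\to}N_p(0,I)$. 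Feeding this through Slutsky's theorem with $\Sigma_{T^*}\to\Sigma$ and $T^2_{1-\alpha,p,q}\to\chi^2_{1-\alpha,p}$ gives
\begin{equation*}
T^*(\epsilon)\,(\theta_{T^*(\epsilon)}-\theta)^T\Sigma_{T^*(\epsilon)}^{-1}(\theta_{T^*(\epsilon)}-\theta)\overset{d}{\to}\chi^2_p,
\end{equation*}
so that $\Pr[\theta\in C_\alpha(T^*(\epsilon))]\to\Pr[\chi^2_p<\chi^2_{1-\alpha,p}]=1-\alpha$. I expect the last transfer step to be the main obstacle: controlling the Brownian increment over the random interval between $T^*(\epsilon)$ and $a(\epsilon)$, which requires upgrading the ratio convergence $T^*(\epsilon)/a(\epsilon)\to1$ to control of $|B(T^*(\epsilon))-B(a(\epsilon))|/\sqrt{a(\epsilon)}$ through the modulus of continuity. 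This is precisely where the moment and ergodicity hypotheses are used, since they are what make the SIP, and hence the entire pathwise argument, available.
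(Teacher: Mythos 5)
Your proposal is correct and follows essentially the same route as the paper's proof: both take as their engine the strong invariance principle guaranteed by the moment and polynomial-ergodicity hypotheses, both show $T^*(\epsilon) \to \infty$ by noting the threshold $\epsilon K_n$ collapses while the left side stays bounded below, both extract $\epsilon\, T^*(\epsilon)^{1/2} \to d_{\alpha,p}^{1/p}\,|\Sigma|^{1/2p}/K$ from near-tightness of the defining inequality at termination (the paper makes your ``asymptotically tight by minimality'' precise by sandwiching $V(T^*(\epsilon)-1) > \epsilon K_{T^*(\epsilon)-1}$ against $V(T^*(\epsilon)+Z(\epsilon)) \le \epsilon K_{T^*(\epsilon)+Z(\epsilon)}$ for a random $Z(\epsilon) \in [0,1]$), and both finish with a random-time-change step followed by a Slutsky-type argument. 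The one place you genuinely differ is that final step: the paper invokes the standard random-time-change argument of \cite{bill:1968} applied to the functional CLT implied by the SIP, whereas you re-derive the time change pathwise from the SIP via Brownian scaling and the modulus of continuity. That is a legitimate and more self-contained execution, but note that your claim that $B(T^*(\epsilon))/\sqrt{T^*(\epsilon)} - B(a(\epsilon))/\sqrt{a(\epsilon)} \to 0$ \emph{with probability 1} overreaches: the modulus-of-continuity bound over an interval of length $\delta a$ carries a $\sqrt{\delta \log\log a}$ factor, so almost-sure convergence would require a rate on $T^*(\epsilon)/a(\epsilon) - 1$, which mere strong consistency of $\Sigma_n$ and $K_n$ does not supply. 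The standard repair---and all that the distributional conclusion needs---is to run this step in probability: for fixed $\delta$, the increment is bounded in distribution by $\sup_{|u|\le\delta}\left|B(1+u)-B(1)\right|$ via scaling, and one then lets $\delta \to 0$; this is exactly the Anscombe-type argument packaged in the result the paper cites. With that weakening, your argument is complete and matches the paper's in every essential respect.
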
   
\begin{remark}
Theorem \ref{thm:asymp_valid} applies when  $K(Y,p) = K_n(Y,p) = 1$. This choice of the relative metric leads to the absolute-precision fixed-volume sequential stopping rule; a multivariate generalization of the procedure considered by \cite{jone:hara:caff:neat:2006}.
\end{remark}

Suppose $K(Y,p)=|\Lambda|^{1/2p} =|\text{Var}_{F}Y_1|^{1/2p}$ and if $\Lambda_n$ is the usual sample covariance matrix for $\{Y_t\}$,  set  $K_n(Y,p)=|\Lambda_n|^{1/2p}$. Note that $\Lambda_n$ is positive definite as long as $n > p$, so $|\Lambda_n|^{1/2p} > 0$.  Then $K_n(Y, p) \to K(Y,p)$, with probability 1, as $n\to \infty$ and $T^*(\epsilon)$ is the first time the variability in estimation (measured via the volume of the confidence region) is an $\epsilon$th fraction of the variability in the target distribution. The \textit{relative standard deviation fixed-volume sequential stopping rule} is  formalized as terminating at random time
\begin{equation}
\label{eq:rel_sd_rule}
  T_{SD}(\epsilon) = \inf \left\{n \geq 0: \text{Vol}(C_\alpha(n))^{1/p} + \epsilon |\Lambda_n|^{1/2p} I(n < n^*) + n^{-1}\leq \epsilon |\Lambda_n|^{1/2p} \right\} \, .
\end{equation}


\section{Effective Sample Size} 
\label{sec:multivariate_effective_sample_size}

\cite{gong:fleg:2015}, \cite{kass:carlin:gelman:neal:1998}, \cite{liu:2008}, and \cite{robe:case:2013} define ESS for the $i$th component of the process as
\[
\text{ESS}_i = \dfrac{n}{1 + 2\sum_{k=1}^{\infty} \rho(Y^{(i)}_1, Y^{(i)}_{1+k})} = n \dfrac{\lambda_i^2}{\sigma_i^2}, 
\]
where $\rho(Y^{(i)}_1, Y^{(i)}_{1+k})$ is the lag $k$ correlation for the $i$th component of $Y$, $\sigma_i^2$ is the $i$th diagonal element of $\Sigma$, and $\lambda_i^2$ is the $i$th diagonal element of $\Lambda$. 
A strongly consistent estimator of ESS$_i$ is obtained through strongly consistent estimators of $\lambda_i^2$ and $\sigma_i^2$ via the sample variance $(\lambda^2_{n,i})$ and univariate batch means estimators $(\sigma^2_{n,i})$, respectively.
ESS$_i$ is then estimated for each component separately, and a conservative estimate of the overall ESS is taken to be the minimum of all ESS$_i$.  This leads to the situation where the estimate of ESS is dictated by the components that mix the slowest, while ignoring all other components.

Instead of using the diagonals of $\Lambda$ and $\Sigma$ to define ESS, we use the matrices themselves. Let $S_p^+$ denote the set of all $p \times p$ positive definite matrices. Univariate quantification of the matrices requires a mapping $S_p^+ \to \real_+$ that captures the variability described by the covariance matrix. We use the determinant since for a random vector, the determinant of its covariance matrix is its generalized variance. The concept of generalized variance was first introduced by \cite{wilks:1932} as a univariate measure of spread for a multivariate distribution. \cite{wilks:1932} recommended the use of the $p$th root of the generalized variance. This was formalized by \cite{sengupta:1987} as the \textit{standardized generalized variance} in order to compare variability over different dimensions. We define 
\begin{equation*}
\label{eq:multi_ESS}
  \text{ESS} = n \left( \dfrac{|\Lambda|}{|\Sigma|} \right)^{1/p}\, .
\end{equation*}
When $p = 1$, the ESS reduces to the form of univariate ESS presented above. Let $\Lambda_n$ be the sample covariance matrix of $\{Y_t\}$ and $\Sigma_n$ be a strongly consistent estimator of $\Sigma$. Then a strongly consistent estimator of ESS is
\begin{equation*}
\label{eq:multi_ESS_est}
  \widehat{\text{ESS}} = n \left(\dfrac{|\Lambda_n|}{|\Sigma_n|}\right)^{1/p}.
\end{equation*}

\subsection{Lower Bound for Effective Sample Size}
\label{subsec:ess}

Rearranging the defining inequality in \eqref{eq:rel_sd_rule} yields that when $n \ge n^*$ 
\begin{align*}
  \widehat{\text{ESS}} & \geq  \left[ \left(\dfrac{2 \pi^{p/2}}{p \Gamma(p/2)} \right)^{1/p} \left(T^2_{1-\alpha, p, q}\right)^{1/2}  + \frac{ |\Sigma_n|^{-1/2p}}{n^{1/2}}\right]^{2} \dfrac{1}{\epsilon^2}  \approx \dfrac{2^{2/p} \pi}{(p \Gamma(p/2))^{2/p}} \left(T^2_{1-\alpha, p, q}\right) \dfrac{1}{\epsilon^2}\, .  
\end{align*} 
Thus, the relative standard deviation fixed-volume sequential stopping rule is equivalent to terminating the first time $\widehat{\text{ESS}}$ is larger than a lower bound. This lower bound is a function of $n$ through $q$ and thus is difficult to determine before starting the simulation. However, as $n \to \infty$, the scaled $T^2_{p,q}$ distribution converges to a $\chi^2_p$, leading to the following approximation 
\begin{equation}
\label{eq:lower_bound_ess}
\widehat{\text{ESS}} \geq \dfrac{2^{2/p} \pi}{(p \Gamma(p/2))^{2/p}} \dfrac{\chi^2_{1-\alpha, p}}{\epsilon^2}\, .   
\end{equation}
One can \textit{a priori} determine the number of effective samples required for their choice of $\epsilon$ and $\alpha$. As $p \to \infty$, the lower bound in \eqref{eq:lower_bound_ess} converges to $2\pi e/\epsilon^2$. Thus for large $p$, the lower bound is mainly determined by the choice of $\epsilon$. 
On the other hand, for a fixed $\alpha$, having obtained $W$ effective samples, the user can use the lower bound to determine the relative precision ($\epsilon$) in their estimation. In this way, \eqref{eq:lower_bound_ess} can be used to make informed decisions regarding termination.

\begin{example} Suppose $p=5$ (as in the logistic regression setting of Section~\ref{sub:illust_example}) and that we want a precision of $\epsilon = .05$ (so the Monte Carlo error is 5\% of the variability in the target distribution) for a $95\%$ confidence region.  This requires $\widehat{\text{ESS}} \geq 8605$.  On the other hand, if we simulate until $\widehat{\text{ESS}} = 10000$, we obtain a precision of $\epsilon = .0464$.  \end{example}

\begin{remark}\label{rem:nstar} Let $n_{\text{pos}}$ be the smallest integer such
   that $\Sigma_{n_{\text{pos}}}$ is positive definite; in the next section we will discuss how to choose $n_{\text{pos}}$ for the mBM estimator. In light of the lower bound in \eqref{eq:lower_bound_ess}, a natural choice of $n^*$ is
 \begin{equation} \label{eq:nstar_bound}
  n^* \geq \max \left\{ n_{\text{pos}}, \dfrac{2^{2/p} \pi}{(p \Gamma(p/2))^{2/p}} \dfrac{\chi^2_{1-\alpha, p}}{\epsilon^2} \right\}\,. 
\end{equation}
\end{remark}

\section{Strong Consistency of Multivariate Batch Means Estimator} 
\label{sec:multivariate_batch_means}

In Sections \ref{sec:termination_rules} and \ref{sec:multivariate_effective_sample_size} we assumed the existence of a strongly consistent estimator of $\Sigma$. A class of multivariate spectral variance estimators were shown to be strongly consistent by \cite{vats:fleg:jones:2017}. However, when $p$ is large, this class of estimators is expensive to calculate as we show in Section \ref{sec:discussion}.  Thus, we present the relatively inexpensive mBM estimator and provide conditions for strong consistency.

Let $n = a_n b_n$, where $a_n$ is the number of batches and $b_n$ is the batch size. For $k = 0, \dots, a_n-1$, define $\bar{Y}_k := b_n^{-1} \sum_{t = 1}^{b_n} Y_{kb_n + t}$. Then $\bar{Y}_k$ is the mean vector for batch $k$ and the mBM estimator of $\Sigma$ is given by \begin{equation} \label{eq:mbm}
  \Sigma_{n} = \dfrac{b_n}{a_n - 1} \ds \sum_{k=0}^{a_n - 1} \left(\bar{Y}_k - \theta_n \right) \left( \bar{Y}_k - \theta_n  \right)^T.
\end{equation}
For the mBM estimator, $q$ in \eqref{eq:vol_conf} is $a_n - p$. In addition, $\Sigma_n$ is singular if $a_n < p$, thus $n_{\text{pos}}$ is the smallest $n$ such that $a_{n} > p$. 

When $Y_t$ is univariate, the batch means estimator has been well studied for MCMC problems \citep{jone:hara:caff:neat:2006,fleg:jone:2010} and for steady state simulations \citep{dame:1994,glyn:igle:1990,glyn:whit:1991}. \cite{glyn:whit:1991} showed that the batch means estimator cannot be consistent for fixed batch size, $b_n$. \cite{dame:1994,damerdji:1995}, \cite{jone:hara:caff:neat:2006} and \cite{fleg:jone:2010} established its asymptotic properties including strong consistency and mean square consistency when \textit{both} the batch size and number of batches increases with $n$.   

The multivariate extension as in \eqref{eq:mbm} was first introduced by \cite{chen:seila:1987}. For steady-state simulation, \cite{charnes:1995} and \cite{muno:glyn:2001} studied confidence regions for $\theta$ based on the mBM estimator, however, its asymptotic properties remain unexplored. In Theorem \ref{thm:mbm}, we present conditions for strong consistency of $\Sigma_n$ in estimating $\Sigma$ for MCMC, but our results hold for more general processes. Our main assumption on the process is that of a \textit{strong invariance principle} (SIP).

\begin{cond}
  \label{cond:msip}
  Let $\|\cdot\|$ denote the Euclidean norm and $ \{B(t), t\geq 0\}$ be a $p$-dimensional multivariate Brownian motion. There exists a $p \times p$ lower triangular matrix $L$, a nonnegative increasing function $\gamma$ on the positive integers, a finite random variable $D$, and a sufficiently rich probability space such that, with probability 1, as $n \to \infty$,
  \begin{equation}\label{eq:sip}
    \| n(\theta_n - \theta) - LB(n)\| < D \gamma(n)\,.
  \end{equation}
\end{cond}



\begin{cond}
\label{cond:bn_conditions}
The batch size $b_n$ satisfies the following conditions,
  \begin{enumerate}
    \item \label{cond:bn_to_n}
  the batch size $b_n$ is an integer sequence such that $b_n \to \infty$ and $n/b_n \to \infty$ as $n\to \infty$ where, $b_n$ and $n/b_n$ are monotonically increasing,
\item \label{cond:bn_by_n_c}
  there exists a constant $c \geq 1$ such that $\sum_{n} \left({b_n}{n}^{-1}\right)^c < \infty$.
  \end{enumerate}
\end{cond}
In Theorem \ref{thm:mbm} we show strong consistency of $\Sigma_n$. The proof is given in the supplementary material 






\begin{theorem}
\label{thm:mbm}
Let $g$ be such that $E_F \|g\|^{2 + \delta} < \infty$ for some $\delta > 0$. Let $X$ be an $F$-invariant polynomially ergodic Markov chain of order $m > (1 + \epsilon_1)(1+2/\delta)$ for some $\epsilon_1 > 0$.  Then \eqref{eq:sip} holds with $\gamma(n) = n^{1/2 - \lambda}$ for some $\lambda >0 $. If Condition \ref{cond:bn_conditions} holds and $b_n^{-1/2} (\log n)^{1/2} n^{1/2 - \lambda} \to 0$ as $n \to \infty$, then $\Sigma_{n} \to \Sigma$, with probability 1, as $n \to \infty$.
\end{theorem}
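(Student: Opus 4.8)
The plan is to pass from the Markov chain to a Brownian motion via the strong invariance principle (SIP) and then establish almost sure convergence of the batch means estimator built from Brownian increments. First I would invoke a multivariate SIP: under $\E_F\|g\|^{2+\delta}<\infty$ and polynomial ergodicity of order $m>(1+\epsilon_1)(1+2/\delta)$, equation \eqref{eq:sip} holds with $\gamma(n)=n^{1/2-\lambda}$ for some $\lambda>0$ and with $\Sigma=LL^T$, as established for this setting in \cite{vats:fleg:jones:2017}. Writing $S_m=\sum_{t=1}^m Y_t$ and $R_m=S_m-m\theta-LB(m)$, this yields $\|R_m\|<Dm^{1/2-\lambda}$ with probability one for all large $m$.

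Next, since $\sum_{k=0}^{a_n-1}(\bar{Y}_k-\theta)=a_n(\theta_n-\theta)$, I would expand
\[
\Sigma_n=\frac{b_n}{a_n-1}\sum_{k=0}^{a_n-1}(\bar{Y}_k-\theta)(\bar{Y}_k-\theta)^T-\frac{n}{a_n-1}(\theta_n-\theta)(\theta_n-\theta)^T .
\]
The law of the iterated logarithm implied by the SIP gives $\|\theta_n-\theta\|=O(\sqrt{\log\log n/n})$, so the correction term is $O(\log\log n/a_n)\to 0$ almost surely. Setting $\xi_k=L(B((k+1)b_n)-B(kb_n))$ and $r_k=R_{(k+1)b_n}-R_{kb_n}$, we have $b_n(\bar{Y}_k-\theta)=\xi_k+r_k$ with $\|r_k\|\le 2Dn^{1/2-\lambda}$ uniformly in $k$. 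Expanding $(\xi_k+r_k)(\xi_k+r_k)^T$ and dividing by $b_n(a_n-1)$ leaves the main term $(b_n(a_n-1))^{-1}\sum_k\xi_k\xi_k^T$ together with cross and pure-error terms; the latter are $O(n^{1-2\lambda}/b_n)$ and $O(n^{1/2-\lambda}/\sqrt{b_n})$ (the cross term by Cauchy--Schwarz and the bound $\sum_k\|\xi_k\|^2=O(n)$ almost surely supplied by the main-term analysis below), both of which vanish under the hypothesis $b_n^{-1/2}(\log n)^{1/2}n^{1/2-\lambda}\to 0$.

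It then remains to show the main term $\frac{a_n}{a_n-1}\cdot\frac1n\sum_{k=0}^{a_n-1}\xi_k\xi_k^T\to\Sigma$ almost surely. Because $\frac1n\sum_k\xi_k\xi_k^T=L\big(\frac1n\sum_k\Delta_k B(\Delta_k B)^T\big)L^T$ with $\Delta_k B=B((k+1)b_n)-B(kb_n)$, it suffices to prove the empirical quadratic variation $\frac1n\sum_k\Delta_k B(\Delta_k B)^T\to I_p$ almost surely, entrywise. For fixed $n$ the increments are independent with $\E\,\Delta_k B(\Delta_k B)^T=b_nI_p$, so every entry has the correct mean; I would then bound the centered $2c$-th moment by a Rosenthal / Marcinkiewicz--Zygmund inequality, obtaining $\E\big|\frac1n\sum_k\big((\Delta_k B^{(i)})^2-b_n\big)\big|^{2c}\lesssim (b_n/n)^c$ for the diagonal entries and an analogous $(b_n/n)^c$ bound for the mean-zero off-diagonal entries. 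Chebyshev's inequality, the summability $\sum_n(b_n/n)^c<\infty$ from Condition \ref{cond:bn_conditions}, and Borel--Cantelli then upgrade this to almost sure convergence along the entire sequence. Combining the vanishing correction term, the vanishing cross and error terms, and the limit $\frac1n\sum_k\xi_k\xi_k^T\to LL^T=\Sigma$ gives $\Sigma_n\to\Sigma$ almost surely.

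I expect this triangular-array quadratic-variation step to be the crux: because the partition mesh $b_n$ itself grows with $n$, the summands depend on $n$ and no ordinary strong law applies, so the moment bound combined with Borel--Cantelli is essential. It is precisely here that both parts of Condition \ref{cond:bn_conditions} enter --- the monotone growth of $b_n$ and $n/b_n$ organizes the partition, while $\sum_n(b_n/n)^c<\infty$ drives the Borel--Cantelli argument --- whereas the rate condition $b_n^{-1/2}(\log n)^{1/2}n^{1/2-\lambda}\to 0$ is what renders the SIP remainder negligible.
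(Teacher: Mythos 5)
Your proposal is correct, and it reaches the theorem by a genuinely different route than the paper. The paper keeps the batch means centered at $\theta_n$, inserts the Brownian batch means $\bar{C}_k$ and the global Brownian mean $\bar{C}(n)$ with $C(t)=LB(t)$, and expands $\left|\Sigma_{n,ij}-\Sigma_{ij}\right|$ into thirteen terms: the leading term is the $(i,j)$ entry of $L\widetilde{\Sigma}L^T$, where $\widetilde{\Sigma}$ is the batch means matrix of a standard Brownian motion, shown to converge to $I_p$ by citing \cite{dame:1994} for the diagonals and, for the off-diagonals, by a chi-square difference trick plus the moment bound of \cite{kend:stua:1963} and Borel--Cantelli (this is where part 2 of Condition \ref{cond:bn_conditions} enters); the remaining twelve error terms are killed using the SIP remainder bound together with the \cite{csor:reve:1981} sup bounds on Brownian increments and the law of the iterated logarithm (this is where the monotonicity in part 1 of Condition \ref{cond:bn_conditions} is used). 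You instead recenter exactly at $\theta$ through the identity $\Sigma_n=\frac{b_n}{a_n-1}\sum_k(\bar{Y}_k-\theta)(\bar{Y}_k-\theta)^T-\frac{n}{a_n-1}(\theta_n-\theta)(\theta_n-\theta)^T$, dispose of the rank-one correction by the LIL, and bound the cross terms by Cauchy--Schwarz using $\sum_k\|\xi_k\|^2=O(n)$ a.s., which is a byproduct of your main quadratic-variation step; that main step (a centered moment bound of order $(b_n/n)^c$ plus Borel--Cantelli) is the same probabilistic engine as the paper's lemma on $\widetilde{\Sigma}$. The net effect is a leaner argument with three terms instead of thirteen that avoids the Brownian increment sup bounds altogether, so the monotonicity hypotheses are used only through $b_n\to\infty$ and $a_n\to\infty$; the paper's expansion, in exchange, yields explicit uniform bounds on every term and plugs directly into the existing univariate analysis of \cite{dame:1994}.

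Two small points you should make explicit. First, the correction term requires $\log\log n/a_n\to 0$, which does not follow from $a_n\to\infty$ alone; it does follow from part 2 of Condition \ref{cond:bn_conditions}, since summability of $a_n^{-c}$ for an increasing sequence forces $n\,a_n^{-c}\to 0$, hence $a_n\gg n^{1/c}$. Second, the uniform bound $\|r_k\|\le 2Dn^{1/2-\lambda}$ over all $k$ requires the SIP inequality to hold simultaneously for all $m\le n$, which is obtained by enlarging the a.s.\ finite random variable $D$ to absorb the finitely many indices below the (random) threshold where \eqref{eq:sip} takes effect; the paper makes the same implicit move.
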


\begin{remark}\label{rem:proof_general}
The theorem holds more generally outside the context of Markov chains for processes that satisfy Condition \ref{cond:msip}. This includes independent processes \citep{berk:phil:1979,einm:1989,zait:1998}, Martingale sequences \citep{eber:1986}, renewal processes \citep{horv:1984} and $\phi$-mixing and strongly mixing processes \citep{kuel:phil:1980,dehl:phil:1982}. The general statement of the theorem is provided in the supplementary material
\end{remark}

\begin{remark}
\label{rem:lambda}
Using Theorem 4 from \cite{kuel:phil:1980}, \cite{vats:fleg:jones:2017} established Condition 1 with $\gamma(n) = n^{1/2 - \lambda}$, for some $\lambda > 0$ for polynomially ergodic Markov chains.  We use their result directly. \cite{kuel:phil:1980} show that $\lambda$ only depends on $p$, $\epsilon$ and $\delta$, however the exact relationship remains an open problem. 
For slow mixing processes $\lambda$ is closer to $0$ while for fast mixing processes $\lambda$ is closer to $1/2$ \citep{dame:1991, dame:1994}.
\end{remark}

\begin{remark}\label{rem:requiring_reg}
It is natural to consider $b_n = \lfloor n^{\nu} \rfloor$ for $ 0 < \nu < 1 $. Then $\lambda$ in the SIP must satisfy $\lambda > (1 - \nu)/4$ so that $b_n^{-1/2} (\log n)^{1/2} n^{1/2 - \lambda} \to 0$ as $n \to \infty$. 
Since $\nu > 1 - 2\lambda$,  smaller batch sizes suffice for fast mixing processes and slow mixing processes require larger batch sizes. This reinforces our intuition that higher correlation calls for larger batch sizes. Calibrating $\nu$ in $b_n = \lfloor n^{\nu} \rfloor$ is essential to ensuring the mBM estimates perform well in finite samples.  Using mean square consistency of univariate batch means estimators, \cite{fleg:jone:2010} concluded that an asymptotically optimal batch size is proportional to $\lfloor n^{1/3} \rfloor$. 
\end{remark}

\begin{remark}\label{rem:weaker_conditions}
For $p = 1$, \cite{jone:hara:caff:neat:2006} proved strong consistency of the batch means estimator under the stronger assumption of geometric ergodicity and a one-step minorization, which we do not make. Thus, in Theorem \ref{thm:mbm} while extending the result of strong consistency to $p \geq 1$, we also weaken the conditions for the univariate case.
\end{remark}


\begin{remark}\label{rem:eigen_con}
By Theorem 3 in \cite{vats:fleg:jones:2017}, strong consistency of the mBM estimator  implies strong consistency of its eigenvalues. 
\end{remark}

\section{Examples} 
\label{sec:examples}

In each of the following examples we present a target distribution $F$, a Markov chain with $F$ as its invariant distribution, we specify $g$, and are interested in estimating $\E_Fg$.  We consider the finite sample performance (based on 1000 independent replications) of the relative standard deviation fixed-volume sequential stopping rules and compare them to the relative standard deviation fixed-width sequential stopping rules (see \cite{fleg:gong:2015} and the  supplementary material).  In each case we make 90\% confidence regions for various choices of $\epsilon$ and specify our choice of $n^*$ and $b_n$. The sequential stopping rules are checked at 10\% increments of the current Monte Carlo sample size.

\subsection{Bayesian Logistic Regression}
\label{sec:bayes_log}

We continue the Bayesian logistic regression example of 
Section \ref{sub:illust_example}. Recall that a random walk
Metropolis-Hastings algorithm was implemented to sample from the
intractable posterior.  We prove the chain is geometrically ergodic in
the supplementary material.

\begin{theorem}
\label{thm:geom_erg_logistic}
The random walk based Metropolis-Hastings algorithm with invariant distribution given by the posterior from \eqref{eq:logistic model} is geometrically ergodic.
\end{theorem}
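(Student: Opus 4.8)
The plan is to establish geometric ergodicity of the random walk Metropolis-Hastings (RWMH) chain by verifying a geometric drift condition together with an associated minorization, which by standard Markov chain theory (see \cite{jone:2004} for a review) implies geometric ergodicity. The strategy rests on the observation that the posterior density arising from \eqref{eq:logistic model} is, up to the normalizing constant, proportional to a product of logistic likelihood terms and a Gaussian $N_5(0,\tau^2 I_5)$ prior. Since each logistic factor $1/(1+e^{-X_i\beta})$ is bounded in $[0,1]$, the likelihood is a bounded function of $\beta$, so the tails of the posterior are controlled by the Gaussian prior. This is exactly the setting covered by the general results of \cite{rob:gel:gilks} and related work on geometric ergodicity of RWMH algorithms, which apply when the target has super-exponentially decaying tails with a regularity condition on the gradient direction.

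\textbf{First I would} verify the two structural hypotheses needed to invoke such a general theorem. One requires the target density $\pi(\beta)$ to be bounded away from $0$ and $\infty$ on compact sets, which is immediate here since the likelihood is continuous and bounded and the Gaussian prior is everywhere positive and smooth. The second and more substantive requirement is that $\pi$ be \emph{super-exponentially light}, meaning
\[
\lim_{\|\beta\| \to \infty} \frac{\beta}{\|\beta\|} \cdot \nabla \log \pi(\beta) = -\infty,
\]
together with the curvature/regularity condition that $\lim_{\|\beta\|\to\infty} \frac{\beta}{\|\beta\|}\cdot \frac{\nabla\pi(\beta)}{\|\nabla\pi(\beta)\|} < 0$. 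I would compute $\nabla \log \pi(\beta)$ explicitly, splitting it into the contribution from the log-likelihood and from the log-prior. The log-prior contributes $-\beta/\tau^2$, whose radial projection is $-\|\beta\|/\tau^2 \to -\infty$, while the log-likelihood gradient is uniformly bounded in $\beta$ (each term is a bounded logistic-type expression times a fixed covariate vector). Hence the prior term dominates and drives the radial projection to $-\infty$, giving super-exponential lightness.

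\textbf{Next I would} use these verified conditions to construct the geometric Lyapunov drift function, the natural candidate being $V(\beta) = e^{s\|\beta\|}$ or $V(\beta)=\pi(\beta)^{-1/2}$ for a suitable constant, and confirm the geometric drift inequality $PV \le \rho V + b\,\mathbf{1}_C$ for some $\rho<1$, finite $b$, and small set $C$; the minorization on $C$ follows because the proposal is a nondegenerate Gaussian whose density is bounded below on any compact set, making every compact set small. This assembles into geometric ergodicity via the standard drift-and-minorization theorem. \textbf{The main obstacle} I anticipate is the technical bookkeeping in verifying the regularity condition on the normalized gradient direction near infinity, since the likelihood gradient, though bounded, need not vanish and could in principle conspire with the prior gradient along particular rays; establishing that the prior term uniformly dominates in \emph{every} direction requires a careful uniform bound, and this is where most of the real work lies. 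The remaining drift computation is then routine once super-exponential lightness is in hand.
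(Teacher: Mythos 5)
Your proposal takes essentially the same route as the paper: the paper's proof verifies precisely the two conditions you identify --- super-exponential lightness, $\lim_{\|\beta\|\to\infty} n(\beta)\cdot\nabla\log f(\beta) = -\infty$, and the gradient-direction condition $\limsup_{\|\beta\|\to\infty} n(\beta)\cdot m(\beta) < 0$ --- and then concludes directly from the theorem of \cite{jarn:hans:2000}, which is the correct citation for this RWMH result (rather than \cite{rob:gel:gilks}, which concerns optimal scaling); note also that your final drift-and-minorization construction is redundant, since that general theorem already delivers geometric ergodicity once the two conditions are checked. The ``main obstacle'' you anticipate is in fact handled by the same observation that drives your first condition: the log-likelihood gradient is uniformly bounded, so $\nabla\log f(\beta) = -\beta/\tau^2 + O(1)$ uniformly in direction, and since $\nabla f = f\,\nabla\log f$ the radial projection of the normalized gradient satisfies
\[
n(\beta)\cdot m(\beta) \;=\; \frac{-\|\beta\|/\tau^2 + O(1)}{\|\beta\|/\tau^2 + O(1)} \;\longrightarrow\; -1 \;<\; 0,
\]
which is exactly the limit the paper computes.
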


As a consequence of Theorem \ref{thm:geom_erg_logistic} and that $F$ has a moment generating function, the conditions of Theorems~\ref{thm:asymp_valid} and \ref{thm:mbm} hold.  

Motivated by the ACF plot in Figure \ref{fig:blog_trace_acf}, $b_n$ was set to $\lfloor n^{1/2} \rfloor$ and $n^* = 1000$. For calculating coverage probabilities, we declare the ``truth'' as the posterior mean from an independent simulation of length $10^9$. The results are presented in Table \ref{tab:blog_all}. As before, the univariate uncorrected method has poor coverage probabilities. For $\epsilon = 0.02$ and $0.01$, the coverage probabilities for both the mBM and uBM-Bonferroni regions are at 90\%. However, termination for mBM is significantly earlier.
\begin{table}[h]
\footnotesize 
    \caption{\footnotesize \label{tab:blog_all}Bayesian Logistic Regression: Over 1000 replications, we present termination iterations, effective sample size at termination and coverage probabilities at termination for each corresponding method. Standard errors are in parenthesis.}

\begin{center}
  \begin{tabular}{c|ccc}
\hline
 & mBM & uBM-Bonferroni &  uBM\\ \hline

          \multicolumn{4}{c}{Termination Iteration} \\ 
          \hline
$\epsilon= 0.05$  & 133005  \tiny{(196)}& 201497 \tiny{(391)} &  100445    \tiny{(213)}\\
$\epsilon = 0.02$ & 844082 \tiny{(1158)} & 1262194 \tiny{(1880)} & 629898   \tiny{(1036)}\\
$\epsilon = 0.01$ & 3309526  \tiny{(1837)} &  5046449    \tiny{(7626)} & 2510673   \tiny{(3150)}\\
 \hline
  \multicolumn{4}{c}{Effective Sample Size} \\  \hline

$\epsilon = 0.05$ &  7712    \tiny{(9)}   &   9270 \tiny{(13)} &   4643 \tiny{(7)}\\
$\epsilon = 0.02$ & 47862    \tiny{(51)}   &  57341 \tiny{(65)} & 28768 \tiny{(36)}\\
$\epsilon = 0.01$ & 186103   \tiny{(110)}  &  228448 \tiny{(271)}&113831 \tiny{(116)}\\ \hline



  \multicolumn{4}{c}{Coverage Probabilities} \\ \hline

$\epsilon = 0.05$ &  0.889  \tiny{(0.0099)} &  0.909 \tiny{(0.0091)}  & 0.569  \tiny{(0.0157)}\\
$\epsilon = 0.02$ &  0.896  \tiny{(0.0097)} &  0.912 \tiny{(0.0090)} & 0.606  \tiny{(0.0155)}\\
$\epsilon = 0.01$ &  0.892  \tiny{(0.0098)} &  0.895 \tiny{(0.0097)}  &0.606  \tiny{(0.0155)}\\ \hline
  \end{tabular}
\end{center}
\end{table}

Recall from Theorem \ref{thm:asymp_valid}, as $\epsilon$ decreases to zero, the coverage probability of confidence regions created at termination using the relative standard deviation fixed-volume sequential stopping rule converges to the nominal level. This is demonstrated in Figure \ref{fig:blog_asymp_validity} where we present the coverage probability over 1000 replications as $-\epsilon$ increases (or $\epsilon$ decreases). Notice that the increase in coverage probabilities need not be monotonic due to the underlying randomness.
\begin{figure}[h]
  \centering
\includegraphics[width = 4in]{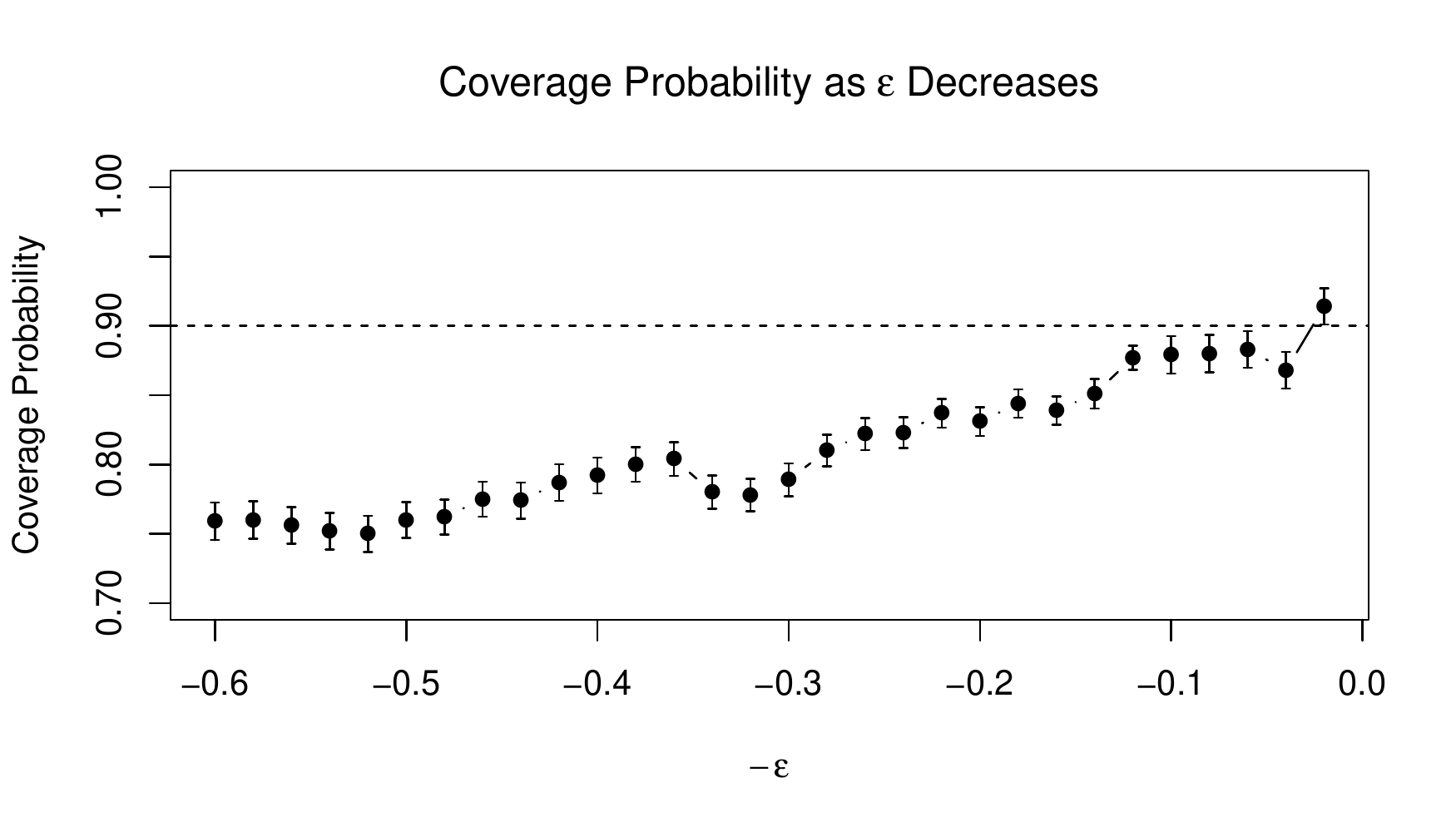}
  \caption{\footnotesize Bayesian Logistic: Plot of coverage probability with confidence bands as $\epsilon$ decreases at 90\% nominal rate. Replications = 1000.}
  \label{fig:blog_asymp_validity}
\end{figure}


\subsection{Vector Autoregressive Process}
\label{sec:var}

Consider the vector autoregressive process of order 1 (VAR(1)). For $ t = 1, 2, \dots $,
\[ Y_t = \Phi Y_{t-1} + \epsilon_t,  \]
where $Y_t \in \mathbb{R}^p$, $\Phi$ is a $p \times p$ matrix, $\epsilon_t \overset{iid}{\sim} N_p(0, \Omega)$, and $\Omega$ is a $p \times p$ positive definite matrix. The matrix $\Phi$ determines the nature of the autocorrelation.  This Markov chain has invariant  distribution $F = N_p(0, V)$ where  $vec(V) = (I_{p^2} - \Phi \otimes \Phi)^{-1} vec(\Omega)$, $\otimes$ denotes the Kronecker product, and is geometrically ergodic when the spectral radius of $\Phi$ is less than 1 \citep{tjos:1990} .

Consider the goal of estimating the mean of $F$, $\E_FY = 0$ with $\bar{Y}_n$. Then 
\[ \Sigma = (I_p - \Phi)^{-1}V + V (I_p - \Phi)^{-1} -V. \] Let $p = 5$, $\Phi = \text{diag}(.9, .5, .1, .1, .1)$, and $\Omega$ be the AR(1) covariance matrix with autocorrelation $0.9$. Since the first eigenvalue of $\Phi$ is large, the first component mixes slowest. We sample the process for $10^5$ iterations and in Figure \ref{fig:var_acf_trace} present the ACF plot for $Y^{(1)}$ and $Y^{(3)}$ and the cross-correlation (CCF) plot between $Y^{(1)}$ and $Y^{(3)}$ in addition to the trace plot for $Y^{(1)}$. Notice that $Y^{(1)}$ exhibits higher autocorrelation than $Y^{(3)}$ and there is significant cross-correlation between $Y^{(1)}$ and $Y^{(3)}$.

\begin{figure}[h]
\centering 
\subfloat[]{\includegraphics[width = 2.5in]{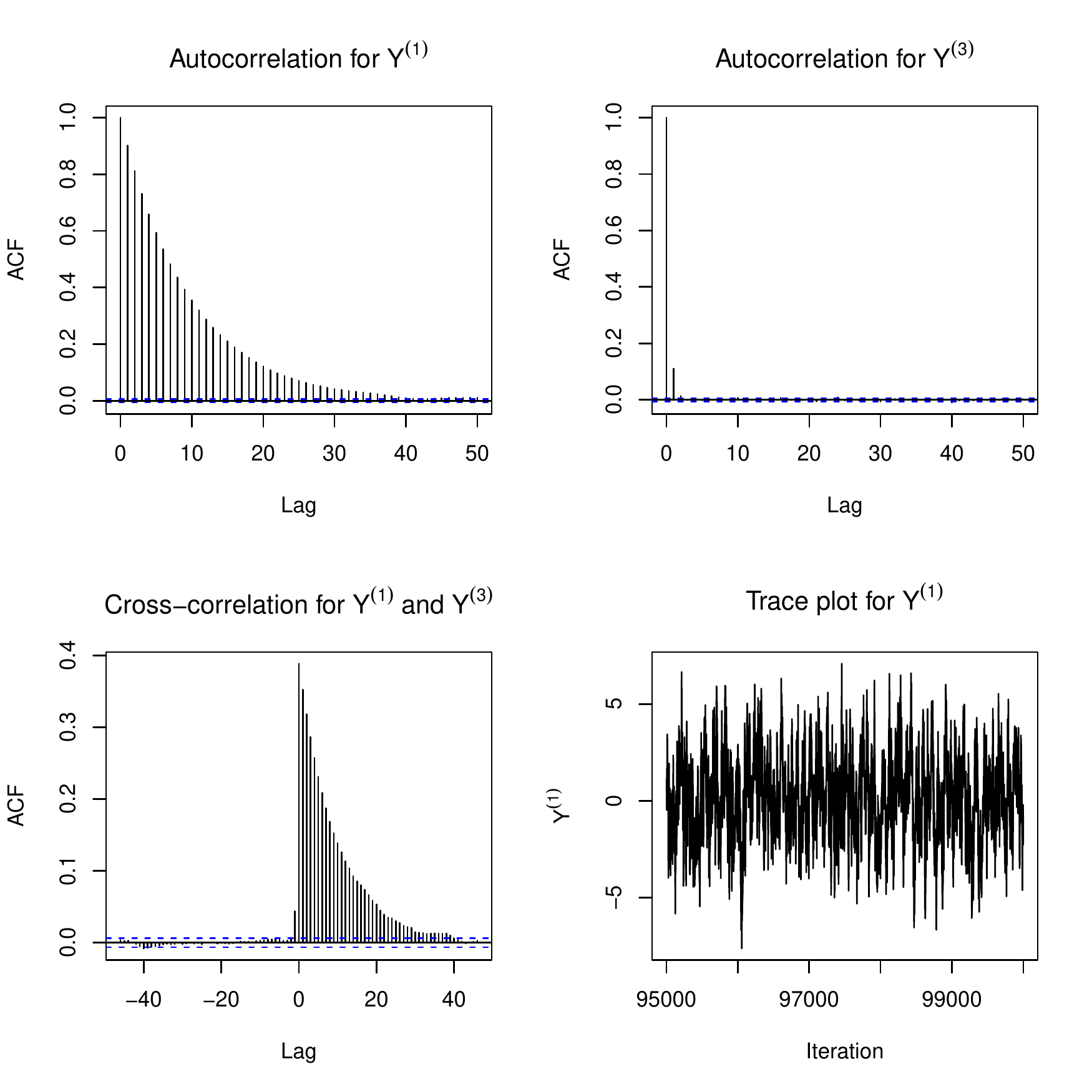}   \label{fig:var_acf_trace}}
\subfloat[]{\includegraphics[width = 2.5in]{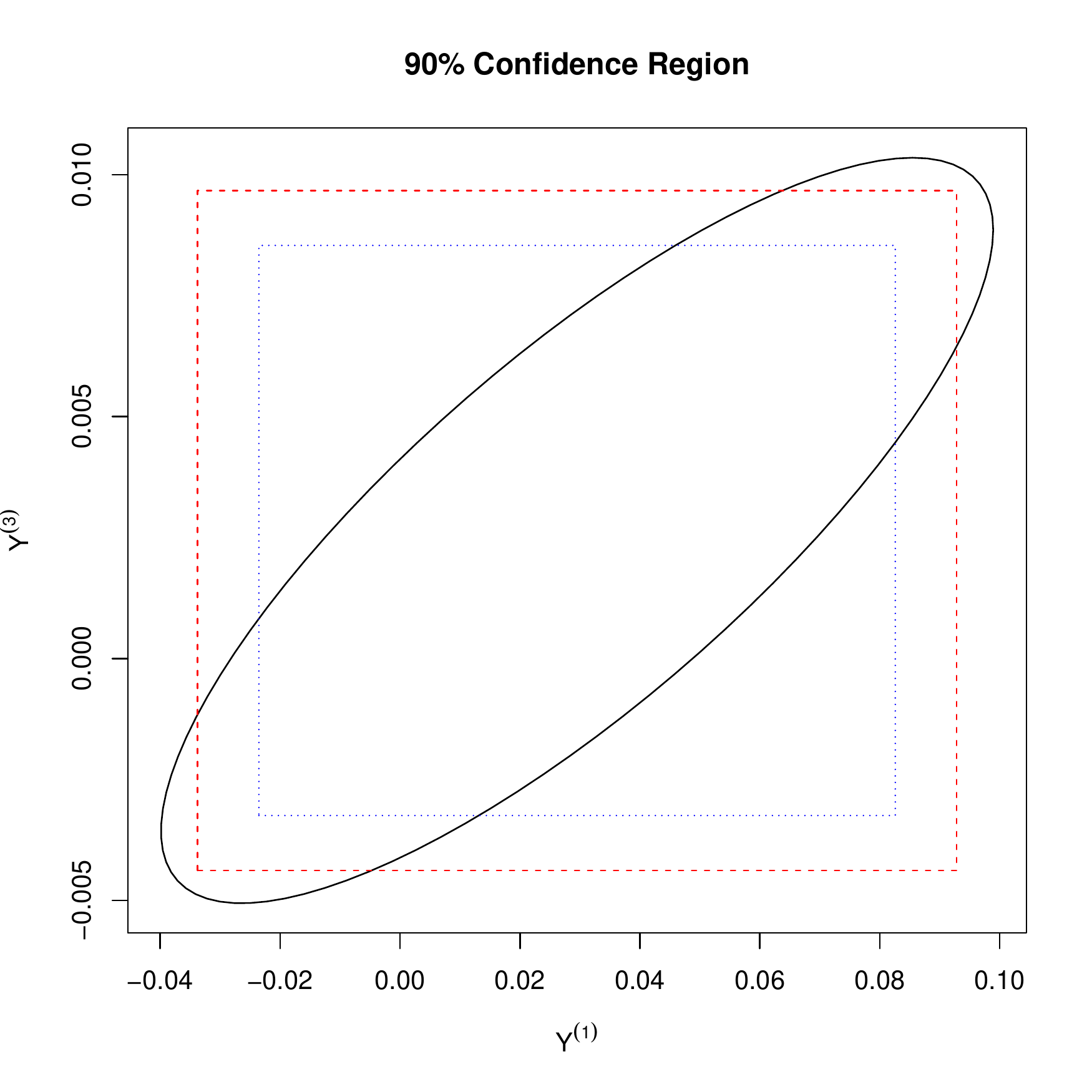}  \label{fig:ellipse_conf_region_var}}

\caption{\footnotesize VAR: (a) ACF plot for $Y^{(1)}$ and $Y^{(3)}$, CCF plot between $Y^{(1)}$ and $Y^{(3)}$, and trace plot for $Y^{(1)}$. Monte Carlo sample size is $10^5$. (b) Joint 90\% confidence region for first the two components of $Y$. The solid ellipse is made using mBM, the dotted box using uBM uncorrected and the dashed line using uBM corrected by Bonferroni. The Monte Carlo sample size is $10^5$. }
\end{figure}


Figure \ref{fig:ellipse_conf_region_var} displays joint  confidence regions for $Y^{(1)}$ and $Y^{(3)}$. Recall that the true mean is $(0,0)$, and is present in all three regions, but the ellipse produced by mBM has significantly smaller volume than the uBM boxes. The orientation of the ellipse is determined by the cross-correlations witnessed in Figure \ref{fig:var_acf_trace}.


We set $n^* = 1000$, $b_n = \lfloor n^{1/2} \rfloor$, $\epsilon$ in $\{0.05, 0.02, 0.01\}$ and at termination of each method, calculate the coverage probabilities and effective sample size. Results are presented in Table \ref{tab:var_all}. Note that as $\epsilon$ decreases, termination time increases and coverage probabilities tend to the 90\% nominal for each method. 
Also note that the uncorrected methods produce confidence regions with undesirable coverage probabilities and thus are not of interest. Consider $\epsilon = .02$ in Table \ref{tab:var_all}. Termination for mBM is at 8.8e4 iterations compared to 9.6e5 for uBM-Bonferroni. However, the estimates for multivariate ESS at 8.8e4 iterations is 4.7e4 samples compared to univariate ESS of 5.6e4 samples for 9.6e5 iterations. This is because the leading component $Y^{(1)}$ mixes much slower than the other components and defines the behavior of the univariate ESS.

\begin{table}[h]
\footnotesize 
    \caption{\footnotesize \label{tab:var_all}VAR: Over 1000 replications, we present termination iterations, effective sample size at termination and coverage probabilities at termination for each corresponding method. Standard errors are in parentheses.}
\begin{center}
  \begin{tabular}{c|ccc}
\hline
 & mBM & uBM-Bonferroni &  uBM\\ \hline

          \multicolumn{4}{c}{Termination Iteration} \\ 
          \hline
$\epsilon= 0.05$  & 14574 \tiny{(27)} & 169890 \tiny{(393)}  & 83910 \tiny{(222)}\\
$\epsilon = 0.02$ & 87682 \tiny{(118)} & 1071449 \tiny{(1733)} &   533377  \tiny{(1015)}\\
$\epsilon = 0.01$ & 343775 \tiny{(469)} &  4317599      \tiny{(5358)} & 2149042  \tiny{(3412)}\\
 \hline
  \multicolumn{4}{c}{Effective Sample Size} \\  \hline

$\epsilon = 0.05$ &  8170    \tiny{(11)}   &    9298        \tiny{(13)}  & 4658     \tiny{(7)}\\
$\epsilon = 0.02$ & 48659    \tiny{(50)}   &  57392        \tiny{(68)} &  28756    \tiny{(37)}\\
$\epsilon = 0.01$ & 190198   \tiny{(208)}  &   228772       \tiny{(223)} & 114553   \tiny{(137)}\\ \hline



\multicolumn{4}{c}{Coverage Probabilities} \\ \hline

$\epsilon = 0.05$ &  0.911  \tiny{(0.0090)}  &     0.940      \tiny{(0.0075)}  &  0.770  \tiny{(0.0133)}\\
$\epsilon = 0.02$ &  0.894  \tiny{(0.0097)}  &     0.950      \tiny{(0.0069)} &   0.769  \tiny{(0.0133)}\\
$\epsilon = 0.01$ &  0.909  \tiny{(0.0091)}   &    0.945      \tiny{(0.0072)}  &  0.779  \tiny{(0.0131)}\\ \hline
  \end{tabular}
  \end{center}
\end{table}

A small study presented in Table \ref{tab:var_ess} elaborates on this behavior. We present the mean estimate of ESS using multivariate and univariate methods based on 100 replications of Monte Carlo sample sizes $10^5$ and $10^6$. The estimate of ESS for the first component is significantly smaller than all other components leading to a conservative univariate estimate of ESS.

\begin{table}[h]
\footnotesize 
  \caption{\footnotesize  \label{tab:var_ess}VAR: Effective sample sample (ESS) estimated using proposed multivariate method and the univariate method of \cite{gong:fleg:2015} for Monte Carlo sample sizes of $n = 1e5, 1e6$ and 100 replications. Standard errors are in parentheses.}
\begin{center}
  \begin{tabular}{c|c|ccccc}
  \hline
  $n$ & ESS & ESS$_1$ & ESS$_2$ & ESS$_3$ & ESS$_4$ & ESS$_5$\\ \hline
1e5 & 55190 \tiny{(200)} & 5432 \tiny{(41)} & 33707 \tiny{(280)} & 82485 \tiny{(728)} & 82903 \tiny{(731)} & 82370 \tiny{(726)}\\ \hline
1e6 & 551015 \tiny{(945)} & 53404 \tiny{(193)} & 334656 \tiny{(1345)} & 819449 \tiny{(3334)} & 819382 \tiny{(3209)} & 819840 \tiny{(2858)}\\ \hline
  \end{tabular}
  \end{center}
\end{table}

\subsection{Bayesian Lasso} 
\label{sub:bayesian_lasso}

Let $y$ be a $K \times 1$ response vector and $X$ be a $K \times r$ matrix of predictors. We consider the following Bayesian lasso formulation of \cite{park:cas:2008}. 
\begin{align*}
y| \beta, \sigma^2, \tau^2 & \sim N_K(X\beta, \sigma^2 I_n) \\
\beta | \sigma^2, \tau^2 & \sim N_r(0, \sigma^2 D_{\tau}) \quad \text{where} \quad D_{\tau} = \text{diag}(\tau^2_1, \tau^2_2, \dots, \tau^2_r)\\
\sigma^2 & \sim \text{Inverse-Gamma}(\alpha, \xi)\\
\tau^2_{j} & \overset{iid}{\sim} \text{Exponential} \left( \dfrac{\lambda^2}{2} \right) \quad \text{ for } j = 1, \dots, r,
\end{align*}
where $\lambda, \alpha$, and $\xi$ are fixed and the Inverse-Gamma$(a,b)$ distribution with density proportional to $x^{-a -1} e^{-b/x}$.  We use a deterministic scan Gibbs sampler to draw approximate samples from the posterior; see \citet{khare:hobe:2013} for a full description of the algorithm. \cite{khare:hobe:2013} showed that for $K \geq 3$, this Gibbs sampler is geometrically ergodic for arbitrary $r$, $X$, and $\lambda$. 

We fit this model to the cookie dough dataset of \cite{osb:fearn:mill:doug:1984}. The data was collected to test the feasibility of near infra-red (NIR) spectroscopy for measuring the composition of biscuit dough pieces. There are 72 observations. The response variable is the amount of dry flour content measured and the predictor variables are 25 measurements of spectral data spaced equally between 1100 to 2498 nanometers. Since we are interested in estimating the posterior mean for $(\beta, \tau^2, \sigma^2)$, $p = 51$. The data is available in the \texttt{R} package \texttt{ppls}, and the Gibbs sampler is implemented in function \texttt{blasso} in \texttt{R} package \texttt{monomvn}. The ``truth'' was declared by averaging posterior means from 1000 independent runs each of length 1e6. We set $n^* = $ 2e4 and $b_n = \lfloor n^{1/3} \rfloor$.

In Table \ref{tab:blasso_all} we present termination results from 1000 replications. With $p$ = 51, the uncorrected univariate regions produce confidence regions with low coverage probabilities. The uBM-Bonferroni and mBM provide competitive coverage probabilities at termination. However, termination for mBM is significantly earlier than univariate methods over all values of $\epsilon$. For $\epsilon = .05$ and $.02$ we observe zero standard error for termination using mBM since termination is achieved at the same 10\% increment over all 1000 replications. Thus the variability in those estimates is less than $10\%$ of the size of the estimate.

\begin{table}[h]
\footnotesize 
  \caption{\footnotesize \label{tab:blasso_all}Bayesian Lasso: Over 1000 replications, we present termination iterations, effective sample size at termination and coverage probabilities at termination for each corresponding method. Standard errors are in parentheses.}

\begin{center}
  \begin{tabular}{c|ccc}
\hline
 & mBM & uBM-Bonferroni &  uBM\\ \hline

          \multicolumn{4}{c}{Termination Iteration} \\ 
          \hline
$\epsilon= 0.05$  & 20000  \tiny{(0)}& 69264 \tiny{(76)} &  20026    \tiny{(7)}\\
$\epsilon = 0.02$ & 69045 \tiny{(0)} & 445754 \tiny{(664)} & 122932   \tiny{(103)}\\
$\epsilon = 0.01$ & 271088  \tiny{(393)} &  1765008    \tiny{(431)} &508445   \tiny{(332)}\\
 \hline
  \multicolumn{4}{c}{Effective Sample Size} \\  \hline

$\epsilon = 0.05$ &  15631    \tiny{(4)}   &   16143  \tiny{(15)}&   4778     \tiny{(6)}\\
$\epsilon = 0.02$ & 52739    \tiny{(8)}   &  101205   \tiny{(122)} & 28358    \tiny{(24)}\\
$\epsilon = 0.01$ & 204801   \tiny{(283)}  &  395480  \tiny{(163)}&115108    \tiny{(74)}\\ \hline



  \multicolumn{4}{c}{Coverage Probabilities} \\ \hline

$\epsilon = 0.05$ &  0.898  \tiny{(0.0096)}   &    0.896      \tiny{(0.0097)}  & 0.010  \tiny{(0.0031)}\\
$\epsilon = 0.02$ &  0.892  \tiny{(0.0098)}    &   0.905      \tiny{(0.0093)} &  0.009  \tiny{(0.0030)}\\
$\epsilon = 0.01$ &  0.898  \tiny{(0.0096)}   &    0.929     \tiny{(0.0081)}  & 0.009  \tiny{(0.0030)}\\ \hline
  \end{tabular}
  \end{center}
\end{table}

\subsection{Bayesian Dynamic Spatial-Temporal Model}
\label{sec:spatio_example}

\cite{gelf:ban:gam:2005} propose a Bayesian hierarchical model for modeling univariate and multivariate dynamic spatial data viewing time as discrete and space as continuous. The methods in their paper have been implemented in the R package \texttt{spBayes}. We present a simpler version of the dynamic model as described by \cite{fin:ban:gel:2015}.

Let $s = 1, 2,  \dots, N_s$ be location sites, $t = 1, 2,  \dots, N_t$ be time-points, and the observed measurement at location $s$ and time $t$ be denoted by $y_t(s)$. In addition, let $x_t(s)$ be the $r \times 1$ vector of predictors, observed at location $s$ and time $t$, and $\beta_t$ be the $r \times 1$ vector of coefficients. For $t = 1, 2, \dots, N_t$,
\begin{align*}
y_t(s) & = x_t(s)^T \beta_t + u_t(s) + \epsilon_t(s), \quad \epsilon_t(s) \overset{ind}{\sim} N(0, \tau^2_t); \numberthis \label{eq:spatio_measurement}\\
\beta_t & = \beta_{t-1} + \eta_t, \quad \eta_t \overset{iid}{\sim} N(0, \Sigma_{\eta});\\ \numberthis \label{eq:spatio_transition}
u_t(s) & = u_{t-1}(s) + w_t(s), \quad w_t(s) \overset{ind}{\sim} GP(0, \sigma^2_t \rho(\cdot; \phi_t)),
\end{align*}
where $GP(0, \sigma^2_t \rho(\cdot; \phi_t))$ denotes a spatial Gaussian process with covariance function $\sigma^2_t \rho(\cdot; \phi_t)$. Here, $\sigma_t^2$ denotes the spatial variance component and $\rho(\cdot, \phi_t)$ is the correlation function with exponential decay.
Equation \eqref{eq:spatio_measurement} is referred to as the measurement equation and $\epsilon_t(s)$ denotes the measurement error, assumed to be independent of location and time. Equation \eqref{eq:spatio_transition}  contains the transition equations which emulate the Markovian nature of dependence in time. To complete the Bayesian hierarchy, the following priors are assumed
\begin{align*}
\beta_0 \sim N(m_0, C_0) & \quad \text{ and } \quad u_0(s) \equiv 0;\\
\tau^2_t \sim \text{IG}(a_{\tau}, b_{\tau}) & \quad \text{ and } \quad \sigma^2_t \sim \text{IG}(a_s, b_s);\\
\Sigma_{\eta} \sim \text{IW}(a_{\eta}, B_{\eta}) & \quad \text{ and } \quad \phi_t \sim \text{Unif}(a_{\phi}, b_{\phi}) \, ,
\end{align*}
where IW is the Inverse-Wishart distribution with probability density function proportional to $|\Sigma_{\eta}| ^{-\frac{a_{\eta} + q + 1}{2}} e^{-\frac{1}{2} tr(B_{\eta} \Sigma_{\eta}^{-1} )} $ and IG$(a, b)$ is the Inverse-Gamma distribution with density proportional to $x^{-a -1} e^{-b/x}$. We fit the model to the \texttt{NETemp} dataset in the \texttt{spBayes} package. This dataset contains monthly temperature measurements from 356 weather stations on the east coast of the USA collected from January 2000 to December 2010. The elevation of the weather stations is also available as a covariate. We choose a subset of the data with 10 weather stations for the year 2000, and fit the model with an intercept. The resulting posterior has $p$ = 185 components.

A component-wise Metropolis-Hastings sampler \citep{john:jone:neat:2013, jone:robe:rose:2014} is described in \cite{gelf:ban:gam:2005} and implemented in the \texttt{spDynLM} function. Default hyper parameter settings were used. The posterior and the rate of convergence for this sampler have not been studied; thus we do not know if the conditions of our theoretical results are satisfied. Our goal is to estimate the posterior expectation of $\theta = (\beta_t, u_t(s), \sigma_t^2, \Sigma_{\eta}, \tau_t^2, \phi_t)$. The truth was declared by averaging over 1000 independent runs of length 2e6 MCMC samples. We set $b_n = \lfloor n^{1/2} \rfloor$ and $n^*$ = 5e4 so that $a_n > p$ to ensure positive definitiveness of $\Sigma_n$.

Due to the Markovian transition equations in \eqref{eq:spatio_transition}, the $\beta_t$ and $u_t$ exhibit a significant covariance in the posterior distribution. This is evidenced in Figure \ref{fig:spatio_conf_region} where for Monte Carlo sample size $n = 10^5$, we present confidence regions for $\beta_1^{(0)}$ and $\beta_2^{(0)}$ which are the intercept coefficient for the first and second months, and for $u_1(1)$ and $u_2(1)$ which are the additive spatial coefficients for the first two weather stations. The thin ellipses indicate that the principle direction of variation is due to the correlation between the components. This significant reduction in volume, along with the conservative Bonferroni correction ($p$ = 185) results in increased delay in termination when using univariate methods. For smaller values  of $\epsilon$ it was not possible to store the MCMC output in memory on a 8 gigabyte machine using uBM-Bonferroni methods.
\begin{figure}[h]
  \centering
\includegraphics[width = 3.8in]{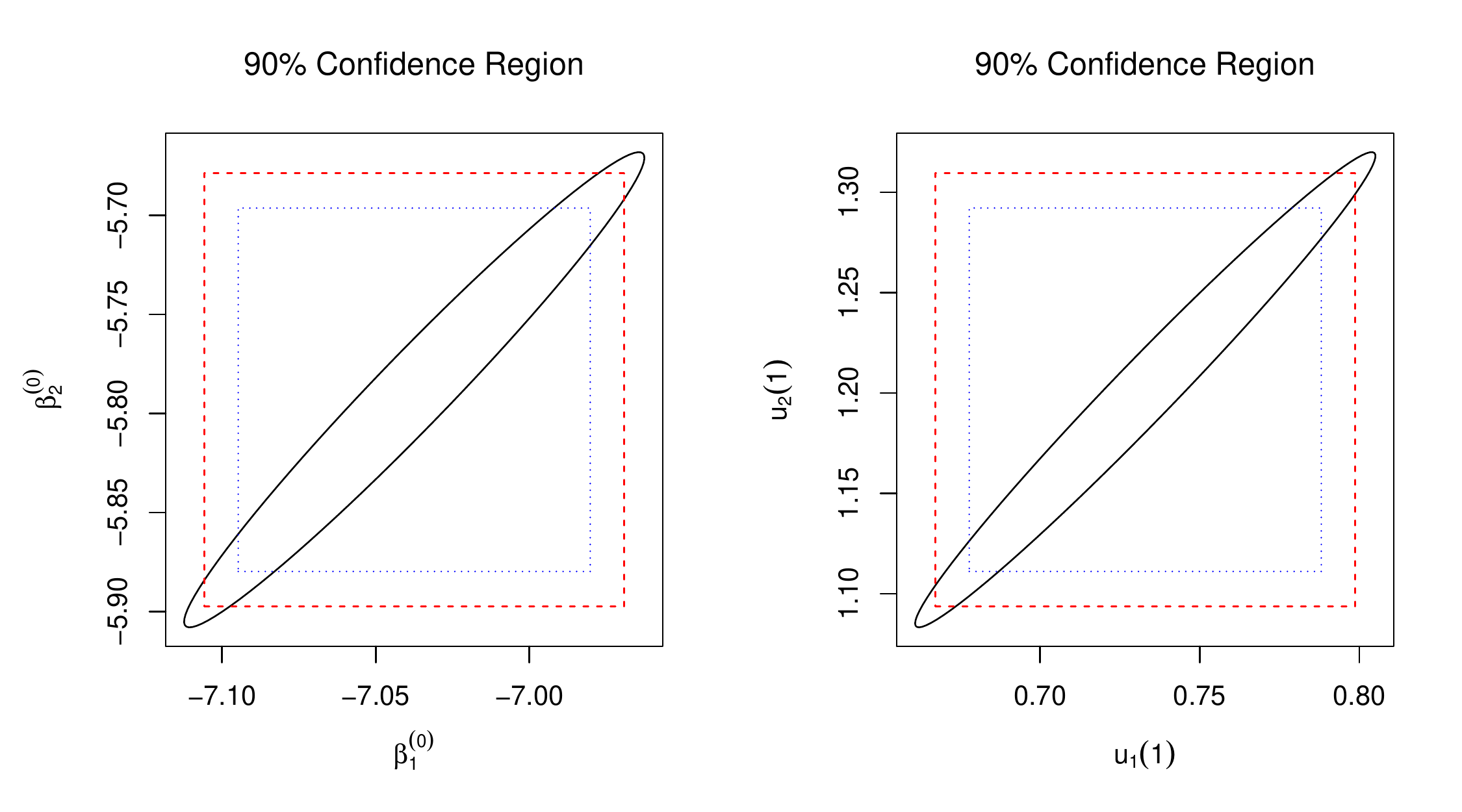}
  \caption{\footnotesize Bayesian Spatial: 90\% confidence regions for $\beta_1^{(0)}$ and $\beta_2^{(0)}$ and $u_1(1)$ and $u_2(1)$. The Monte Carlo sample size = $10^5$.}
  \label{fig:spatio_conf_region}
\end{figure}
As a result (see Table \ref{tab:spatio_all}), the univariate methods could not be implemented for smaller $\epsilon$ values. For $\epsilon = .10$, termination for mBM was at $n^* = $5e4 for every replication. At these minimum iterations, the coverage probability for mBM is at 88\%, whereas both the univariate methods have far lower coverage probabilities at 0.62 for uBM-Bonferroni and 0.003 for uBM. The coverage probabilities for the uncorrected methods are quite small since we are making 185 confidence regions simultaneously.

\begin{table}[h]
\footnotesize 
    \caption{\footnotesize \label{tab:spatio_all}Bayesian Spatial: Over 1000 replications, we present termination iteration, effective sample size at termination and coverage probabilities at termination for each corresponding method at 90\% nominal levels. Standard errors are in parentheses.}
\begin{center}
  \begin{tabular}{c|ccc}
\hline
 & mBM & uBM-Bonferroni &  uBM\\ \hline

          \multicolumn{4}{c}{Termination Iteration} \\ 
          \hline
$\epsilon= 0.10$  & 50000  \tiny{(0)}& 1200849 \tiny{(28315)} &  311856    \tiny{(491)}\\
$\epsilon= 0.05$  & 50030  \tiny{(12)}& - &  1716689 \tiny{(2178)}\\
$\epsilon = 0.02$ & 132748 \tiny{(174)} &-  & - \\
 \hline
  \multicolumn{4}{c}{Effective Sample Size} \\  \hline
$\epsilon = 0.10$ &  55170    \tiny{(20)}   &  3184 \tiny{(75)}  &   1130 \tiny{(1)}\\
$\epsilon = 0.05$ &  55190    \tiny{(20)}   & - & 4525 \tiny{(4)} \\
$\epsilon = 0.02$ & 105166    \tiny{(97)}   & - & - \\ \hline


  \multicolumn{4}{c}{Coverage Probabilities} \\ \hline
$\epsilon = 0.10$ &  0.882  \tiny{(0.0102)}   &    0.625      \tiny{(0.0153)}  & 0.007  \tiny{(0.0026)}\\
$\epsilon = 0.05$ &  0.881  \tiny{(0.0102)}   &    - & 0.016 \tiny{(0.0040)} \\
$\epsilon = 0.02$ &  0.864  \tiny{(0.0108)}    &   - &  -\\ \hline
  \end{tabular}
  \end{center}
\end{table}

\section{Discussion} 
\label{sec:discussion}

Multivariate analysis of MCMC output data has received little attention. \cite{seil:1982} and \cite{chen:seila:1987} built a framework for multivariate analysis for a Markov chain using regenerative simulation. Since establishing regenerative properties for a Markov chain requires a separate analysis for every problem and will not work well in component-wise Metropolis-Hastings samplers, the application of their work is limited. \cite{paul:mac:2012} used a specific multivariate Markov chain CLT for their MCMC convergence diagnostic.  More recently, \cite{vats:fleg:jones:2017} showed strong consistency of the multivariate spectral variance (mSV) estimators of $\Sigma$, which could potentially substitute for the mBM estimator in applications to termination rules. However, outside of toy problems where $p$ is small, the mSV estimator is computationally demanding compared to the mBM estimator. To compare these two estimators, we extend the VAR(1) example discussed in Section \ref{sec:var}, to $p = 50$. Since in this case we know the true $\Sigma$, we assess the relative error in estimation $\|\hat{\Sigma} - \Sigma\|_F/\|\Sigma\|_F$ where $\hat{\Sigma}$ is either the mBM or mSV estimator and $\|\cdot\|_F$ is the Frobenius norm. The results are reported in Table \ref{tab:comparison_msve}. The mSV estimator overall has slightly smaller relative error, but as the Monte Carlo sample size increases, computation time is significantly higher than the mBM estimator. Also note that the relative error for both estimators decreases with an increase in Monte Carlo sample size. The mSV and mBM estimators along with multivariate ESS have been implemented in the \texttt{mcmcse} R package \citep{fleg:hugh:vats:2015}. 
 \begin{table}[h]
 \footnotesize 
    \caption{\footnotesize  \label{tab:comparison_msve}  Relative error and computation time (in seconds) comparison between mSV and mBM estimators for a VAR(1) model with $p = 50$ over varying Monte Carlo samples sizes. Standard errors are in parentheses.}
\begin{center}
  \begin{tabular}{lccc}
  \hline
    Method & $n$ & Relative Error & Computation Time\\ \hline
    mBM & 1e3 & 0.373 \tiny{(0.00374)}  & 0.00069 \tiny{(1.5e-05)} \\
    mSV & 1e3 & 0.371 \tiny{(0.00375)} & 0.06035 \tiny{(2.1e-05)}\\
  \hline
    mBM & 1e4 & 0.177 \tiny{(0.00205)} & 0.00376 \tiny{(1.7e-05)} \\
    mSV & 1e4 & 0.163 \tiny{(0.00197)}& 2.13193 \tiny{(1.8e-04)} \\
  \hline
    mBM & 1e5 & 0.095 \tiny{(0.00113)} & 0.04038 \tiny{(8.6e-05)} \\
    mSV & 1e5 & 0.081 \tiny{(0.00100)}& 68.2796 \tiny{(0.11416)} \\
  \hline
  \end{tabular}
  \end{center}
\end{table}


There are two aspects of the proposed methodology that will benefit from further research. First, the rate of convergence of the Markov chain affects the choice of $b_n$ through the $\lambda$ in the SIP. Aside from \cite{damerdji:1995} and \cite{fleg:jone:2010}, little work has been done in optimal batch size selection for batch means estimators. This area warrants further research both in asymptotic and finite sample optimal batch selection.  In the supplement we study the effect of different batch sizes on simulation termination using the relative standard deviation fixed-volume sequential stopping rule. We notice that a decrease in the tolerance level $\epsilon$, decreases the sensitivity of termination to the choice of $b_n$. We have found that using a large batch size such as $b_n = \lfloor n^{1/2} \rfloor$ often works well.

Second, when using the mBM estimator, a truly large $p$ requires a large Monte Carlo sample size to ensure a positive definite estimate of $\Sigma$. The mSV estimator might yield positive definite estimates at smaller sample sizes, but is far too computationally expensive (see Table \ref{tab:comparison_msve}). It could be interesting to investigate the use of dimension reduction techniques or high-dimensional asymptotics to address this problem.

\bibliographystyle{apalike}
\bibliography{mcref}

\bigskip
\begin{center}
{\large\bf SUPPLEMENTARY MATERIAL}
\end{center}

\appendix
\section{Appendix} 
\label{sec:appendix}

\section[S.]{Proof of Theorem \prettyref{Sthm:asymp_valid} }
\label{subsec:thm_asymp_valid}

By \cite{vats:fleg:jones:2017}, since $\E_F \|g\|^{2 + \delta} < \infty$ and  $\{X_t\}$ is polynomially ergodic of order $k > (1+\epsilon_1)(1 + 2/\delta)$, Condition \prettyref{Scond:msip} holds with $\gamma(n) = n^{1/2 - \lambda}$ for $\lambda > 0$.  This implies the existence of an FCLT. Now, recall that
\[ T^*(\epsilon) = \inf \left \{ n \geq 0: \text{Vol}(C_\alpha(n))^{1/p} + s(n) \leq \epsilon K_n(Y,p) \right \}.  \]
For cleaner  notation we will use $K_n$ for $K_n(Y,p)$ and $K$ for $K(Y,p)$.  First, we show that as $\epsilon \to 0$, $T^*(\epsilon) \to \infty$. Recall $s(n) = \epsilon K_nI(n < n^*) + n^{-1}$. Consider the rule,
\[t(\epsilon) = \inf\{n \geq 0: s(n) < \epsilon K_n \} = \inf \{n \geq 0: \epsilon I(n < n^*) + (K_n n)^{-1} < \epsilon \}. \]
As $\epsilon \to 0$, $t(\epsilon) \to \infty$. Since $T^*(\epsilon) > t(\epsilon)$, $T^*(\epsilon) \to \infty$ as $ \epsilon \to 0$.

Define $V(n) = \text{Vol}(C_\alpha(n))^{1/p} + s(n)$. Then
\begin{align*}
T^*(\epsilon) & = \inf \{n \geq 0: V(n) \leq  \epsilon K_n \}.
\end{align*}
Let
\[d_{\alpha,p} = \dfrac{2\pi^{p/2} }{p \Gamma(p/2)}  \left(\chi^2_{1-\alpha, p} \right)^{p/2} .\]
Since $s(n) = o(n^{-1/2})$ and $\Sigma$ is positive definite,
\begin{align*}
n^{1/2} V(n) & = n^{1/2} \left[n^{-1/2}  \left(\dfrac{2\pi^{p/2} }{p \Gamma(p/2)} \left( \dfrac{p (a_n - 1)}{(a_n - p)} F_{1-\alpha, p, a_n-p}  \right)^{p/2} |\Sigma_n |^{1/2} \right)^{1/p} + s(n) \right]\\
& = \left(\dfrac{2\pi^{p/2} }{p \Gamma(p/2)} \left( \dfrac{p (a_n - 1)}{(a_n - p)} F_{1-\alpha, p, a_n-p}  \right)^{p/2} |\Sigma_n |^{1/2} \right)^{1/p} + n^{1/2} s(n)\\
& \to (d_{\alpha,p} |\Sigma|^{1/2})^{1/p} > 0\text{ w.p. 1 as } n \to \infty. \numberthis \label{eq:vn_limit}
\end{align*}
By definition of $T^*(\epsilon)$
\begin{equation}
\label{eq:vn_upper}
  V(T^*(\epsilon) - 1) > \epsilon K_{T^*(\epsilon) - 1},
\end{equation}
and there exists a random variable $Z(\epsilon)$ on $[0,1]$ such that,
\begin{equation}
\label{eq:vn_lower}
V(T^*(\epsilon) + Z(\epsilon) ) \leq \epsilon K_{T^*(\epsilon) + Z(\epsilon)}.
\end{equation}
Since $K_n$ is strongly consistent for $K$ and $T^*(\epsilon) \to \infty$ w.p. 1 as $\epsilon \to 0$,
\begin{equation}
\label{eq:lambda_const}
K_{T^*(\epsilon)}\to K \quad \text{w.p. 1},
\end{equation}
Using \eqref{eq:vn_limit}, \eqref{eq:vn_upper}, \eqref{eq:vn_lower}, and \eqref{eq:lambda_const}
\begin{align*}
\limsup_{\epsilon \to 0} \epsilon T^*(\epsilon)^{1/2} \leq \limsup_{\epsilon \to 0} \dfrac{T^*(\epsilon)^{1/2} V(T^*(\epsilon) - 1)}{K_{T^*(\epsilon) - 1}}  = d_{\alpha,p}^{1/p} \dfrac{|\Sigma|^{1/2p}}{K}   \quad \text{w.p. 1}\\
\liminf_{\epsilon \to 0} \epsilon T^*(\epsilon)^{1/2} \geq \liminf_{\epsilon \to 0} \dfrac{T^*(\epsilon)^{1/2}V(T^*(\epsilon) + Z(\epsilon))}{ K_{T^*(\epsilon) + Z(\epsilon)}} = d_{\alpha,p}^{1/p} \dfrac{|\Sigma|^{1/2p}}{K} \quad \text{w.p. 1}.
\end{align*}
Thus, 
\begin{equation}
\label{eq:eps_teps}
\lim_{\epsilon \to 0} \epsilon T^*(\epsilon)^{1/2} = d_{\alpha,p}^{1/p} \dfrac{|\Sigma|^{1/2p}}{K}.
\end{equation}

 Using \eqref{eq:eps_teps} and the existence of a FCLT, by a  standard random-time-change argument \cite[p. 151]{bill:1968}
\begin{equation*}
 \label{eq:random_time_change}
 \sqrt{T^*(\epsilon)} \Sigma^{-1/2}_{T^*(\epsilon)} (\theta_{T^*(\epsilon)} - \theta) \overset{d}{\to} N_p(0,I_p) \quad \text{ as } \epsilon \to 0.
 \end{equation*}

Finally,
\begin{align*}
& \Pr[\theta \in C_\alpha(T^*(\epsilon))] \\
& = \Pr\left[ T^*(\epsilon) (\theta_{T^*(\epsilon)} - \theta)^T \Sigma^{-1}_{T(\epsilon)} (\theta_{T^*(\epsilon)} - \theta)  \leq \dfrac{p (a_{T^*(\epsilon)} - 1)}{(a_{T^*(\epsilon)} - p)} F_{1-\alpha, p, a_{T^*(\epsilon)}-p}  ; |\Sigma_{T^*(\epsilon)}| \ne 0 \right]\\
 & \quad + \Pr\left[\theta \in C_\alpha(T^*(\epsilon)); |\Sigma_{T^*(\epsilon)}| = 0 \right] \\
 & \to 1 - \alpha \text{ w.p. 1 as } n \to \infty \text{ since }  \Pr(|\Sigma_{T^*(\epsilon)}| = 0 ) \to 0 \text{ as } \epsilon \to 0. 
\end{align*}

\section{Proof of Theorem \ref{Sthm:mbm} }
\label{subsec:thm1}

Note that in this section we use the notation $| \cdot |$ to denote absolute value and not determinant. We first state the theorem more generally for processes that satisfy a strong invariance principle. Then, the proof of Theorem \ref{Sthm:mbm} will follow from Theorem \ref{thm:mbm_general} below and Corollary 5 in \cite{vats:fleg:jones:2017}.

\begin{theorem} \label{thm:mbm_general} Let Conditions \prettyref{Scond:msip} and \prettyref{Scond:bn_conditions} hold. If $b_n^{-1/2} (\log n)^{1/2} \gamma(n) \to 0$ as $n \to \infty$, then $\Sigma_{n} \to \Sigma$ w.p. 1 as $n \to \infty$.  \end{theorem}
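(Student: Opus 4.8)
The plan is to reduce the mean-centered estimator \eqref{eq:mbm} to a functional of the Brownian motion supplied by Condition \ref{cond:msip}, and then analyze that functional directly. First I would re-center the batch means at the true mean $\theta$ rather than $\theta_n$. Writing $S_m=\sum_{t=1}^m Y_t$ and using $\sum_{k=0}^{a_n-1}\bar Y_k=a_n\theta_n$, the estimator splits as $\Sigma_n=A_n-C_n$, where $A_n=\frac{b_n}{a_n-1}\sum_{k=0}^{a_n-1}(\bar Y_k-\theta)(\bar Y_k-\theta)^T$ and $C_n=\frac{n}{a_n-1}(\theta_n-\theta)(\theta_n-\theta)^T$. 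The correction $C_n$ is negligible: by \eqref{eq:sip} and the law of the iterated logarithm for $B$, $\|\theta_n-\theta\|^2=O(n^{-1}\log\log n)$ w.p.\ 1, so $\|C_n\|=O(a_n^{-1}\log\log n)\to 0$, since the summability in Condition \ref{cond:bn_conditions} forces $a_n=n/b_n$ to grow at least polynomially.

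For the main term, set $\hat B_k=B((k+1)b_n)-B(kb_n)$, $\tilde m_k=b_n^{-1}L\hat B_k$, and let $\tilde A_n=\frac{b_n}{a_n-1}\sum_k \tilde m_k\tilde m_k^T$ be the Brownian surrogate of $A_n$. Defining the remainder $U_m=S_m-m\theta-LB(m)$, the invariance principle gives $\|U_m\|<D\gamma(m)\le D\gamma(n)$ for $m\le n$, hence $\bar Y_k-\theta=\tilde m_k+e_k$ with $\|e_k\|\le 2Db_n^{-1}\gamma(n)$ uniformly in $k$. Expanding the quadratic form, $\|A_n-\tilde A_n\|$ is controlled by $\frac{b_n}{a_n-1}\sum_k(2\|\tilde m_k\|\|e_k\|+\|e_k\|^2)$. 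Bounding the increments through $\max_{k}\|\hat B_k\|=O(\sqrt{b_n\log n})$ w.p.\ 1 (a standard maximal bound for the $a_n\le n$ Gaussian increments) gives $\max_k\|\tilde m_k\|=O(b_n^{-1/2}(\log n)^{1/2})$, so the cross term is $O(b_n^{-1/2}(\log n)^{1/2}\gamma(n))$ and the quadratic term is $O(b_n^{-1}\gamma(n)^2)$; both vanish precisely under the hypothesis $b_n^{-1/2}(\log n)^{1/2}\gamma(n)\to 0$. This is exactly where that assumption enters.

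It then remains to show $\tilde A_n\to\Sigma$ w.p.\ 1. Putting $Z_k=b_n^{-1/2}\hat B_k$, for each fixed $n$ the increments $Z_0,\dots,Z_{a_n-1}$ are i.i.d.\ $N_p(0,I_p)$, and $\tilde A_n=\frac{a_n}{a_n-1}L\bigl(\frac1{a_n}\sum_k Z_kZ_k^T\bigr)L^T$, so with $LL^T=\Sigma$ it suffices that $\frac1{a_n}\sum_k Z_kZ_k^T\to I_p$ w.p.\ 1. This is the main obstacle, because the partition—and hence the entire array $\{Z_k\}$—changes with $n$, so an ordinary strong law does not apply. I would instead derive an entrywise Rosenthal-type moment bound $\E\bigl\|\frac1{a_n}\sum_k(Z_kZ_k^T-I_p)\bigr\|^{2c}=O(a_n^{-c})=O((b_n/n)^c)$, valid because the Gaussian coordinates have all moments, and then combine Markov's inequality with the summability $\sum_n(b_n/n)^c<\infty$ of Condition \ref{cond:bn_conditions}. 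Borel–Cantelli yields $\frac1{a_n}\sum_k Z_kZ_k^T\to I_p$ w.p.\ 1, and assembling the three pieces gives $\Sigma_n\to\Sigma$ w.p.\ 1. Theorem \ref{thm:mbm} then follows by specializing to $\gamma(n)=n^{1/2-\lambda}$ through the result of \cite{vats:fleg:jones:2017}.
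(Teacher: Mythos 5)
Your proposal is correct, and while it shares the paper's skeleton---couple the batch means to Brownian increments via the SIP, control the coupling error using the hypothesis $b_n^{-1/2}(\log n)^{1/2}\gamma(n)\to 0$, and prove the purely Brownian statement by a moment bound plus Borel--Cantelli exploiting the summability $\sum_n (b_n/n)^c<\infty$---your organization is genuinely different and more economical. You first remove the $\theta_n$-centering \emph{exactly}, writing $\Sigma_n=A_n-\tfrac{n}{a_n-1}(\theta_n-\theta)(\theta_n-\theta)^T$ (valid since $\sum_k\bar Y_k=a_n\theta_n$ when $n=a_nb_n$) and killing the correction with the law of the iterated logarithm together with the observation that summability of $a_n^{-c}$ plus monotonicity of $a_n$ forces $a_n^{-1}\log\log n\to 0$; the paper instead carries the sample-mean centering through the entire argument, inserting the Brownian batch means $\bar C_k$ and grand mean $\bar C(n)$ and bounding thirteen separate terms one by one with the Cs\"org\H{o}--R\'ev\'esz increment bounds. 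Correspondingly, your Brownian surrogate uses uncentered increments, so the core limit is $\tfrac{1}{a_n}\sum_k Z_kZ_k^T\to I_p$ for row-wise i.i.d.\ standard normal vectors, which you establish via an entrywise Rosenthal-type moment bound; the paper proves the analogous statement for the mean-centered surrogate by the polarization trick $AB=\tfrac14(A+B)^2-\tfrac14(A-B)^2$, reducing off-diagonal entries to differences of $\chi^2$ variables and invoking a $\chi^2$ moment inequality plus a summability-implies-a.s.-convergence lemma. Both are the same Markov/Borel--Cantelli mechanism; the paper's tools are slightly more elementary, while yours stay at the matrix level and isolate cleanly where each hypothesis enters. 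Two points you should make explicit in a full write-up, though neither is a gap: the almost-sure maximal bound $\max_k\|\hat B_k\|=O((b_n\log n)^{1/2})$ is precisely the Cs\"org\H{o}--R\'ev\'esz increment theorem and requires the monotonicity assumptions in Condition 2 part 1; and the uniform bound $\|e_k\|\le 2Db_n^{-1}\gamma(n)$ uses monotonicity of $\gamma$ together with absorbing the finitely many exceptional initial indices of the SIP into the random constant $D$---both handled identically, if implicitly, in the paper's own argument.
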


The proof of this theorem will be presented in a series of lemmas. We first construct $\widetilde{\Sigma}$, a Brownian motion equivalent of the batch means estimator and show that $\widetilde{\Sigma}$ converges to the identity matrix with probability 1 as $n$ increases. This result will be critical in proving the theorem.

Let $B(t)$ be a $p$-dimensional standard Brownian motion, and for $i = 1, \dots, p$, let $B^{(i)}$ denote the $i$th component univariate Brownian motion. For $k = 0, \dots, a_n - 1$ define 
\[\bar{B}^{(i)}_k = \dfrac{1}{b_n} (B^{(i)}((k+1)b_n) - B^{(i)}(kb_n)) \quad \text{ and } \quad \bar{B}^{(i)}(n) = \dfrac{1}{n} B^{(i)}(n). \] 
For $\bar{B}_k = ( \bar{B}^{(1)}_k, \dots, \bar{B}^{(p)}_k )^T$ and  $\bar{B}(n) = \left( \bar{B}^{(1)}(n), \dots, \bar{B}^{(p)}(n) \right)^T$, define
\[ \widetilde{\Sigma} = \dfrac{b_n}{a_n - 1} \ds \sum_{k=0}^{a_n - 1} [\bar{B}_k - \bar{B}(n)][\bar{B}_k - \bar{B}(n)]^T. \]

Here $\widetilde{\Sigma}$ is the Brownian motion equivalent of $\Sigma_n$ and in Lemma \ref{lemma:sigma_tilde_i} we will show that $\widetilde{\Sigma}$ converges to the identity matrix with probability 1. But first we state some results we will need.

\begin{lemma}[\citet{kend:stua:1963}]
\label{lemma:kendall_chi}
If $Z \sim \chi^2_v$, then for all positive integers $r$ there exists a constant $K:= K(r)$ such that $\E[(Z-v)^{2r}] \leq Kv^r$.
\end{lemma}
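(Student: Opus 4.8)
The plan is to reduce the claim to a standard moment estimate for a sum of independent, mean-zero summands. Write $Z = \sum_{i=1}^{v} X_i^2$ with $X_1,\dots,X_v$ i.i.d.\ $N(0,1)$, and put $W_i = X_i^2 - 1$, so that $Z - v = \sum_{i=1}^{v} W_i$ is a sum of i.i.d.\ variables with $\E W_i = 0$ and a \emph{$v$-free} distribution; in particular each $\E|W_i|^{s}$ is a finite constant not depending on $v$.

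First I would expand the $2r$-th moment multinomially,
\[
\E\left[(Z-v)^{2r}\right] = \sum_{i_1,\dots,i_{2r}=1}^{v} \E\left[ W_{i_1}\cdots W_{i_{2r}} \right],
\]
and invoke independence together with $\E W_i = 0$: an index configuration contributes a nonzero expectation only if every distinct value occurring among $i_1,\dots,i_{2r}$ occurs at least twice. Since there are only $2r$ factors, this forces the number of distinct indices in any surviving term to be at most $r$.

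Next I would group the surviving terms by the number $j \le r$ of distinct indices they involve. For each $j$ there are $O(v^{j})$ ways to choose those indices, while the associated expectation is a product of fixed moments of $W_1$ and hence bounded by a constant depending only on $r$. Summing over $j = 1,\dots,r$ gives a bound of the form $\E[(Z-v)^{2r}] \le \sum_{j=1}^{r} c_j v^{j}$, a polynomial in $v$ of degree $r$; for $v \ge 1$ this is at most $K(r)\,v^{r}$ with $K(r)=\sum_{j=1}^{r} c_j$, which is the assertion. An equivalent route is to note that the cumulants of $\chi^2_v$ are $\kappa_s = 2^{s-1}(s-1)!\,v$, all linear in $v$, and then apply the partition formula expressing the central moment $\mu_{2r}$ as a sum over set partitions of $\{1,\dots,2r\}$ into blocks of size at least two; each such partition has at most $r$ blocks, so the top power of $v$ is again $v^{r}$.

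The only genuine bookkeeping — and the single point I would be careful about — is the claim that the highest power of $v$ appearing is exactly $v^{r}$ rather than something larger. Both arguments reduce this to the same elementary combinatorial fact: partitioning $2r$ objects into blocks each of size at least two yields at most $r$ blocks, with $r$ achieved only by a perfect pairing. Everything else in the estimate is a finite constant independent of $v$.
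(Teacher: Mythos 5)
Your proof is correct, but it is worth noting that the paper itself does not prove this lemma at all: it is stated with an attribution to \citet{kend:stua:1963}, where it follows from the explicit formulas for the central moments (equivalently, the cumulants $\kappa_s = 2^{s-1}(s-1)!\,v$) of the $\chi^2_v$ distribution, each central moment $\mu_{2r}$ being a polynomial in $v$ of degree $r$. Your first argument is genuinely different and more general: writing $Z - v = \sum_{i=1}^v (X_i^2 - 1)$ and running the multinomial expansion with the mean-zero/independence cancellation proves the $O(v^r)$ bound for \emph{any} sum of i.i.d.\ centered variables with finite $2r$-th moment, with the chi-square structure entering only through finiteness of the moments of $X_1^2 - 1$; this is essentially a Rosenthal/Marcinkiewicz--Zygmund-type estimate, and the combinatorial core (blocks of size at least two, hence at most $r$ distinct indices, hence at most $r$ powers of $v$) is handled correctly, including the restriction to $v \geq 1$ when collapsing the polynomial $\sum_j c_j v^j$ into $K(r)v^r$. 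Your second route, via the partition formula for central moments in terms of cumulants, is in substance the same computation that underlies the textbook result the paper cites, so it recovers the paper's implicit argument. What the citation buys the paper is brevity; what your self-contained argument buys is independence from the exact chi-square moment formulas and a bound that transparently extends beyond the chi-square case.
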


\begin{lemma}[\citet{bill:1968}]
\label{lemma:billingsley}
Consider a family of random variables $\{ R_n: n \geq 1\}$. If $\E[|R_n|] \leq s_n$, where $s_n$ is a sequence such that $\ds \sum_{n} s_n < \infty$, then $R_n \to 0$ w.p. 1 as $n \to \infty$.
\end{lemma}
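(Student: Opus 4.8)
The plan is to deduce almost sure convergence of $R_n$ to $0$ from almost sure convergence of the nonnegative random series $\sum_n |R_n|$, exploiting the elementary fact that the general term of a convergent series must tend to zero. Thus the whole argument reduces to showing that $\sum_{n=1}^\infty |R_n| < \infty$ with probability 1, and the hypothesis $\sum_n s_n < \infty$ with $\E|R_n| \le s_n$ is tailor-made for this.

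First I would introduce the nonnegative (possibly infinite-valued) random variable $S := \sum_{n=1}^\infty |R_n|$ and compute its expectation. Because every summand $|R_n|$ is nonnegative, Tonelli's theorem — equivalently the monotone convergence theorem applied to the increasing partial sums $\sum_{n=1}^{N} |R_n|$ — justifies interchanging expectation and summation, giving
\[ \E[S] = \sum_{n=1}^\infty \E|R_n| \le \sum_{n=1}^\infty s_n < \infty, \]
where the first inequality is the stated bound $\E|R_n|\le s_n$ and finiteness is the summability assumption on $\{s_n\}$.

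With $\E[S]<\infty$ in hand, the nonnegative random variable $S$ is finite with probability 1; writing $A=\{S<\infty\}$ we have $\Pr(A)=1$. For every outcome in $A$ the deterministic series $\sum_n |R_n|$ converges, so its terms satisfy $|R_n|\to 0$, i.e. $R_n\to 0$. Since this holds on the full-probability event $A$, we conclude $R_n\to 0$ with probability 1, as claimed.

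There is no genuine obstacle here, only one technical point worth flagging: the legitimacy of exchanging $\E$ with the infinite sum. This is exactly where nonnegativity of the $|R_n|$ is used, since Tonelli then applies with no prior integrability assumption and produces the bound directly. For completeness I note the classical alternative via the first Borel--Cantelli lemma: for fixed $\varepsilon>0$, Markov's inequality gives $\Pr(|R_n|\ge\varepsilon)\le s_n/\varepsilon$, which is summable, so $\{|R_n|\ge\varepsilon\}$ occurs only finitely often almost surely; intersecting the resulting full-probability events over $\varepsilon=1/k$, $k\in\N$, yields $\limsup_n |R_n|\le 1/k$ for all $k$ on a full-probability event, hence $R_n\to 0$ with probability 1. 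The Tonelli route above is the more direct of the two.
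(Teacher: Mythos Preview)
Your argument is correct. The Tonelli/monotone-convergence step is valid because the terms $|R_n|$ are nonnegative, so $\E\bigl[\sum_n |R_n|\bigr]=\sum_n \E|R_n|\le\sum_n s_n<\infty$, whence $\sum_n |R_n|<\infty$ almost surely and $R_n\to 0$ almost surely. The Borel--Cantelli alternative you sketch is also fine.

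As for comparison: the paper does not supply its own proof of this lemma. It is quoted as a known result from \citet{bill:1968} and used as a black box (specifically, to conclude $X/a_n\to 1$ and $Y/a_n\to 1$ almost surely in the proof of Lemma~\ref{lemma:sigma_tilde_i}). So there is nothing in the paper to compare your argument against; your write-up simply fills in a standard fact that the authors chose to cite rather than prove.
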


\begin{lemma}
\label{lemma:sigma_tilde_i}
If Condition \prettyref{Scond:bn_conditions} holds, then $\widetilde{\Sigma} \to I_p$ with probability 1 as $n \to \infty$ where $I_p$ is the $p \times p$ identity matrix.
\end{lemma}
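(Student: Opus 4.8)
The plan is to show that $\widetilde{\Sigma}$ is \emph{exactly} the unbiased sample covariance matrix of $a_n$ i.i.d.\ $N_p(0,I_p)$ vectors, and then to exploit this distributional structure. The first step is a reduction: for $k = 0,\dots,a_n-1$ and $i = 1,\dots,p$, set $\xi_k^{(i)} = b_n^{-1/2}\bigl(B^{(i)}((k+1)b_n) - B^{(i)}(kb_n)\bigr)$. Because $B$ is a standard $p$-dimensional Brownian motion with independent increments and independent coordinates, the vectors $\xi_k = (\xi_k^{(1)},\dots,\xi_k^{(p)})^T$ are i.i.d.\ $N_p(0,I_p)$. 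Using $n = a_n b_n$ one checks the key identities $b_n^{1/2}\bar B_k^{(i)} = \xi_k^{(i)}$ and $b_n^{1/2}\bar B^{(i)}(n) = a_n^{-1}\sum_{k}\xi_k^{(i)} =: \bar\xi^{(i)}$, so that
\[ \widetilde{\Sigma}_{ij} = \frac{1}{a_n-1}\sum_{k=0}^{a_n-1}\bigl(\xi_k^{(i)}-\bar\xi^{(i)}\bigr)\bigl(\xi_k^{(j)}-\bar\xi^{(j)}\bigr). \]
Thus $\widetilde{\Sigma}$ is the sample covariance matrix of $a_n$ standard normal vectors. Since $p$ is fixed, it suffices to prove entrywise convergence: $\widetilde{\Sigma}_{ii}\to 1$ and, for $i\neq j$, $\widetilde{\Sigma}_{ij}\to 0$ w.p.\ 1.

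For the diagonal entries I would use the exact distribution $(a_n-1)\widetilde{\Sigma}_{ii}\sim\chi^2_{a_n-1}$. Writing $R_n = \widetilde{\Sigma}_{ii}-1 = \{(a_n-1)\widetilde{\Sigma}_{ii} - (a_n-1)\}/(a_n-1)$ and applying Lemma~\ref{lemma:kendall_chi} with $v = a_n-1$ gives, for every positive integer $r$, $\E[R_n^{2r}] \le K(a_n-1)^{-r}$. The second part of Condition~\prettyref{Scond:bn_conditions} states $\sum_n (b_n/n)^c = \sum_n a_n^{-c} < \infty$ for some $c\ge 1$; choosing the integer $r \ge c$ and using $a_n\to\infty$ (so $a_n-1\ge a_n/2$ eventually) makes $\sum_n (a_n-1)^{-r}$ summable. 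Lemma~\ref{lemma:billingsley} applied to the family $\{R_n^{2r}\}$ then yields $R_n^{2r}\to 0$, and hence $\widetilde{\Sigma}_{ii}\to 1$ w.p.\ 1.

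For an off-diagonal entry $i\neq j$ I would avoid a direct moment computation by polarization. Setting $U_k = (\xi_k^{(i)}+\xi_k^{(j)})/\sqrt2$ and $V_k = (\xi_k^{(i)}-\xi_k^{(j)})/\sqrt2$, both families are i.i.d.\ $N(0,1)$ across $k$, and a short algebraic identity gives $\widetilde{\Sigma}_{ij} = \tfrac12(\widetilde S_{UU} - \widetilde S_{VV})$, where $\widetilde S_{UU}$ and $\widetilde S_{VV}$ are the sample variances of the $U_k$ and $V_k$. Each converges to $1$ w.p.\ 1 by the diagonal argument of the previous paragraph, so $\widetilde{\Sigma}_{ij}\to 0$ w.p.\ 1. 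Intersecting the finitely many probability-one events over all pairs $(i,j)$ gives $\widetilde{\Sigma}\to I_p$ w.p.\ 1.

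The main obstacle is conceptual rather than computational: because the batch size $b_n$ changes with $n$, the normalized increments $\xi_k$ form a \emph{triangular array} whose entries are re-defined at every $n$, so one cannot simply invoke the strong law for a single i.i.d.\ sequence. This forces the moment-bound-plus-Borel--Cantelli route through Lemmas~\ref{lemma:kendall_chi} and~\ref{lemma:billingsley}, and the quantitative heart of the argument is matching the polynomial moment order $2r$ to the exponent $c$ in Condition~\prettyref{Scond:bn_conditions} so that the bounding series converges; the polarization identity is then the clean device that transfers the exact chi-squared behaviour from the diagonal to the off-diagonal entries.
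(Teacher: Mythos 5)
Your proof is correct, but it takes a genuinely different route from the paper's. The paper proves the lemma entrywise by brute-force expansion: the diagonal case is cited from \cite{dame:1994}, and each off-diagonal entry $\widetilde{\Sigma}_{ij}$ is split into four terms, $\bar{B}_k^{(i)}\bar{B}_k^{(j)}$, the two mean-cross terms $\bar{B}_k^{(i)}\bar{B}^{(j)}(n)$ and $\bar{B}^{(i)}(n)\bar{B}_k^{(j)}$, and $\bar{B}^{(i)}(n)\bar{B}^{(j)}(n)$. Only the first term is handled the way you handle everything (an \emph{uncentered} polarization into $\chi^2_{a_n}$ variables, then Lemma~\ref{lemma:kendall_chi}, Lemma~\ref{lemma:billingsley}, and Condition~\prettyref{Scond:bn_conditions}); the remaining three terms require separate Brownian-motion computations (telescoping the increments and a scaling argument that the normalized quantities ``tend to Dirac's delta function''). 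Your key move--observing that $\widetilde{\Sigma}$ is \emph{exactly} the unbiased sample covariance of $a_n$ i.i.d.\ $N_p(0,I_p)$ vectors and centering \emph{before} polarizing--makes those three mean terms vanish into the centering, so that both $\widetilde{S}_{UU}$ and $\widetilde{S}_{VV}$ have exact $\chi^2_{a_n-1}$ laws and the entire lemma (diagonal case included, with no appeal to \cite{dame:1994}) reduces to a single moment-plus-Borel--Cantelli argument; you also correctly identify the triangular-array obstruction that forces this route rather than a plain strong law. What your version buys is a shorter, self-contained, and more rigorous argument: it avoids the paper's somewhat informal Dirac-delta step, which as written only yields convergence in probability for those terms. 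What the paper's version buys is structural: its four-term expansion and the path-wise bounds on $\bar{B}_k^{(i)}$ and $\bar{B}^{(i)}(n)$ are exactly the machinery reused in the thirteen-term expansion of the main proof of Theorem~\ref{thm:mbm_general}, where the process $\bar{V}_k$ is not Gaussian and your exact-distribution shortcut is unavailable.
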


\begin{proof}
For $i,j = 1, \dots, p$, let $\widetilde{\Sigma}_{ij}$ denote the $(i,j)$th component of $\widetilde{\Sigma}$. For $i = j$, \cite{dame:1994} showed that $\widetilde{\Sigma}_{ij} \to 1$ with probability 1 as $n \to \infty$. Thus it is left to show that for $i \ne j$, $\widetilde{\Sigma}_{ij} \to 0$ with probability 1 as $n \to \infty$. 
\begin{align}\label{eq:tildesig}
\begin{split}
\widetilde{\Sigma}_{ij} & = \dfrac{b_n}{a_n - 1} \ds \sum_{k=0}^{a_n - 1} \left[\bar{B}_k^{(i)} - \bar{B}^{(i)}(n)  \right]\left[\bar{B}_k^{(j)} - \bar{B}^{(j)}(n)  \right]\\
& = \dfrac{b_n}{a_n - 1} \ds \sum_{k=0}^{a_n - 1}  \left[ \bar{B}^{(i)}_k \bar{B}^{(j)}_k - \bar{B}^{(i)}_k \bar{B}^{(j)}(n) - \bar{B}^{(i)}(n)\bar{B}^{(j)}_k + \bar{B}^{(i)}(n)\bar{B}^{(j)}(n) \right].
\end{split}
\end{align}

We will show that each of the four terms in \eqref{eq:tildesig} converges to 0 with probability 1 as $n \to \infty$. But first we note that by the properties of Brownian motion, for all $k = 0, \dots, a_n - 1$, 
\begin{align*}
& \bar{B}_k^{(i)} \overset{iid}{\sim} N\left(0, \dfrac{1}{b_n} \right)
\text{ and }
\bar{B}_k^{(j)} \overset{iid}{\sim} N\left(0, \dfrac{1}{b_n} \right) \text{ independently}\\
\Rightarrow & \sqrt{b_n}\bar{B}_k^{(i)} \overset{iid}{\sim} N\left(0, 1 \right)
\text{ and }
\sqrt{b_n}\bar{B}_k^{(j)} \overset{iid}{\sim} N\left(0, 1 \right) \text{ independently}.\numberthis \label{eq:b_k_standardized}
\end{align*}

\begin{enumerate}
\item 
Naturally by \eqref{eq:b_k_standardized},
\begin{equation*}\label{normaltrick}
X_k := \sqrt{\dfrac{b_n}{2}}\bar{B}_k^{(i)} + \sqrt{\dfrac{b_n}{2}}\bar{B}_k^{(j)} \overset{iid}{\sim} N\left(0, 1 \right)
\, \text{ and } \,
Y_k := \sqrt{\dfrac{b_n}{2}}\bar{B}_k^{(i)} - \sqrt{\dfrac{b_n}{2}}\bar{B}_k^{(j)} \overset{iid}{\sim} N\left(0, 1 \right)
\end{equation*}

Notice that $AB =  (A+B)^2/4 - (A-B)^2/4$. Using $X_k$ as $(A+B)$ and $Y_k$ as $(A-B)$, we can write $ {b_n} \bar{B}^{(i)}_k \bar{B}^{(j)}_k / 2$ as a linear combination of two $\chi^2$ random variables. Specifically,
\begin{align*}
\dfrac{b_n}{a_n-1} \ds \sum_{k=0}^{a_n - 1} \bar{B}^{(i)}_k \bar{B}^{(j)}_k & = \dfrac{2}{a_n - 1} \ds \sum_{k=0}^{a_n - 1} \sqrt{\dfrac{b_n}{2}}\bar{B}^{(i)}_k  \sqrt{\dfrac{b_n}{2}}\bar{B}^{(j)}_k\\
& = \dfrac{1}{2(a_n - 1)} \ds \sum_{k=0}^{a_n - 1} [X_k^2 - Y_k^2]\\
& = \dfrac{1}{2(a_n - 1)} \ds \sum_{k=0}^{a_n - 1}X_k^2 - \dfrac{1}{2(a_n - 1)} \ds \sum_{k=0}^{a_n - 1}Y_k^2 \\
& = \dfrac{a_n}{2(a_n - 1)}  \dfrac{X}{a_n} - \dfrac{a_n}{2(a_n - 1)} \dfrac{Y}{a_n}, \numberthis \label{eq:firstterm}
\end{align*}

where $X =  \sum_{k=0}^{a_n - 1}X_k^2 \sim \chi^2_{a_n}$ and $Y =  \sum_{k=0}^{a_n - 1}Y_k^2 \sim \chi^2_{a_n}$ independently.

By Lemma \ref{lemma:kendall_chi}, for all positive integers $c$,
\begin{equation*}
\E \left[ (X - a_n)^{2c}\right] \leq K a_n^c \Rightarrow \E \left[ \left(\dfrac{X}{a_n} - 1 \right)^{2c} \right] \leq K \left(\dfrac{b_n}{n} \right)^c.
\end{equation*}

Thus by Lemma \ref{lemma:billingsley} and Condition \prettyref{Scond:bn_conditions}\prettyref{Scond:bn_by_n_c}, ${X}/{a_n} \to 1 $ with probability 1, as $n\to \infty$. Similarly, ${Y}/{a_n} \to 1 $ with probability 1, as $n\to \infty$. Using this result in \eqref{eq:firstterm} and the fact that ${a_n}/({a_n - 1}) \to 1$ as $n \to \infty$,
\[
\dfrac{b_n}{a_n-1} \ds \sum_{k=0}^{a_n - 1} \bar{B}^{(i)}_k \bar{B}^{(j)}_k \to 0 \text{ w.p. 1 as } n \to \infty.
\]
 
\item  By the definition of $\bar{B}(n)$ and $\bar{B}_k$,
\begin{align*}
\dfrac{b_n}{a_n-1} \ds \sum_{k=0}^{a_n - 1} \bar{B}_k^{(i)} \bar{B}^{(j)}(n)& = \dfrac{1}{a_n - 1} \dfrac{1}{n} B^{(j)}(n) \ds \sum_{k=0}^{a_n-1} B^{(i)}((k+1)b_n) - B^{(i)}(kb_n) \\
& = \dfrac{1}{a_n - 1} \dfrac{1}{n} B^{(j)}(n)  B^{(i)}(a_nb_n)\\
& = \dfrac{a_n}{a_n - 1} \dfrac{\sqrt{b_n}}{n} B^{(j)}(n) \dfrac{\sqrt{b_n}}{a_n b_n} B^{(i)}(a_nb_n) \; . \numberthis \label{eq:secondterm}
\end{align*}

Using properties of Brownian motion,
\begin{align*}
B^{(j)}(n) \sim N(0,n) & ~~\text{ and }~~ B^{(i)}(a_nb_n) \sim N(0, a_nb_n)\\
\Rightarrow \dfrac{\sqrt{b_n}}{n}B^{(j)}(n) \overset{d}{\sim} N\left(0,\dfrac{b_n}{n}\right)& ~~\text{ and }~~ \dfrac{\sqrt{b_n}}{a_nb_n}B^{(i)}(a_nb_n) \overset{d}{\sim} N\left(0, \dfrac{1}{a_n}\right). \numberthis \label{eq:dirac}
\end{align*}

As $n \to \infty$ both terms in \eqref{eq:dirac} tend to Dirac's delta function. Thus as $n \to \infty$.
\begin{equation}\label{eq:secondfinal}
\dfrac{\sqrt{b_n}}{n}B^{(j)}(n) \to 0 \text{~w.p. 1} \quad \text{ and }\quad \dfrac{\sqrt{b_n}}{a_nb_n}B^{(i)}(a_nb_n) \to 0 \text{ w.p. 1}.
\end{equation}

Using \eqref{eq:secondfinal} in \eqref{eq:secondterm},
\[ \dfrac{b_n}{a_n-1} \ds \sum_{k=0}^{a_n - 1} \bar{B}_k^{(i)} \bar{B}^{(j)}(n) \to 0 ~~\text{w.p. 1 as } n \to \infty. \]

\item A similar argument as above yields, 
\[ \dfrac{b_n}{a_n-1} \ds \sum_{k=0}^{a_n - 1} \bar{B}^{(i)}(n)\bar{B}_k^{(j)}  \to 0 ~~\text{w.p. 1 as } n \to \infty.  \]

\item By the definition of $\bar{B}^{(i)}(n)$,
\begin{align*}
\dfrac{b_n}{a_n-1} \ds \sum_{k=0}^{a_n-1} \bar{B}^{(i)}(n) \bar{B}^{(j)}(n)& = \dfrac{b_n}{a_n-1} a_n \dfrac{1}{n} B^{(i)}(n)\dfrac{1}{n}B^{(j)}(n)\\
& = \dfrac{a_n}{a_n-1} \dfrac{\sqrt{b_n}}{n} B^{(i)}(n) \dfrac{\sqrt{b_n}}{n} B^{(j)}(n)\\
& \to 0 ~~\text{w.p. 1 as } n \to \infty \text{ by \eqref{eq:secondfinal}}.
\end{align*}
\end{enumerate}

Thus each term in \eqref{eq:tildesig} goes to 0 with probability 1 as $n \to \infty$, yielding $\widetilde{\Sigma} \to I_p$ with probability 1 as $n \to \infty$.
\end{proof}

\begin{corollary}
\label{cor:l_sigmtilde_lt_to_sigma}
If Condition \prettyref{Scond:bn_conditions} holds, then for any conformable matrix $L$, as $n \to \infty$, $L \widetilde{\Sigma} L^T \to LL^T$ with probability 1.
\end{corollary}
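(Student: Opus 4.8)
The plan is to obtain this corollary essentially for free from Lemma \ref{lemma:sigma_tilde_i}, which already supplies all the work by establishing $\widetilde{\Sigma} \to I_p$ with probability 1 as $n \to \infty$. The only additional ingredient needed is the continuity of the map $A \mapsto L A L^T$ on the space of $p \times p$ matrices, so the corollary is really just a repackaging of the lemma into the form $LL^T$ that will be convenient when passing from $\widetilde{\Sigma}$ to the Brownian-motion surrogate of $\Sigma_n$ later in the argument.

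Concretely, I would fix a sample point $\omega$ in the probability-1 event on which $\widetilde{\Sigma} \to I_p$ and argue deterministically along this path in an entrywise fashion. Writing $L$ as an $r \times p$ matrix, the $(a,b)$ entry of $L \widetilde{\Sigma} L^T$ is the finite double sum $\sum_{i=1}^p \sum_{j=1}^p L_{ai} \widetilde{\Sigma}_{ij} L_{bj}$. Because the number of summands is fixed in $n$ and each coordinate satisfies $\widetilde{\Sigma}_{ij} \to (I_p)_{ij}$ by Lemma \ref{lemma:sigma_tilde_i}, the finite linear combination converges to $\sum_{i=1}^p \sum_{j=1}^p L_{ai} (I_p)_{ij} L_{bj} = \sum_{i=1}^p L_{ai} L_{bi} = (LL^T)_{ab}$. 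Since this holds for every index pair $(a,b)$, I conclude $L \widetilde{\Sigma} L^T \to LL^T$ on the chosen path.

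Finally, as the event on which this reasoning is valid has probability 1, the convergence holds with probability 1, which is exactly the claim. Equivalently, one could phrase the last step via the continuous mapping theorem: the function $f(A) = L A L^T$ is linear and hence continuous in $A$, and almost sure convergence is preserved under continuous maps, so $\widetilde{\Sigma} \to I_p$ w.p. 1 immediately yields $f(\widetilde{\Sigma}) \to f(I_p) = LL^T$ w.p. 1. I do not anticipate any genuine obstacle here, since the substantive content is entirely inherited from Lemma \ref{lemma:sigma_tilde_i}; the only point requiring a word of care is that the entrywise argument relies on the fixed (dimension-independent-of-$n$) number of terms, which is what makes termwise passage to the limit legitimate.
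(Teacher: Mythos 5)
Your proposal is correct and takes essentially the same approach as the paper: the paper offers no separate argument for this corollary, treating it as an immediate consequence of Lemma \ref{lemma:sigma_tilde_i} (that $\widetilde{\Sigma} \to I_p$ with probability 1), which is exactly what your entrywise (equivalently, continuous-mapping) deduction makes explicit.
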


For the rest of the proof, we will require some results regarding Brownian motion.

\begin{lemma}[\citet{csor:reve:1981}]
\label{bn.diff.bound}
Suppose Condition \prettyref{Scond:bn_conditions}\prettyref{Scond:bn_to_n} holds, then for all $\epsilon > 0$ and for almost all sample paths, there exists $n_0(\epsilon)$ such that for all $n \geq n_0$ and all $i = 1, \dots, p$
\[\sup_{0\leq t \leq n-b_n} \sup_{0 \leq s \leq b_n} |B^{(i)}(t+s) - B^{(i)}(t)| < (1+\epsilon) \left(2b_n \left(\log{\dfrac{n}{b_n}} + \log{\log n}  \right) \right) ^{1/2}  \]
\[\sup_{0 \leq s \leq b_n} |B^{(i)}(n) - B^{(i)}(n-s)| < (1+\epsilon) \left(2b_n \left(\log{\dfrac{n}{b_n}} + \log{\log n}  \right) \right) ^{1/2}  \] and
\[\left| B^{(i)}(n) \right|  < (1 + \epsilon) \sqrt{2n \log \log n}.\]
\end{lemma}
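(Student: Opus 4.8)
The plan is to reduce everything to a single component and then prove two distinct estimates: a law-of-the-iterated-logarithm bound for the third display and an increment bound for the first two. Since $B^{(1)},\dots,B^{(p)}$ are standard one-dimensional Brownian motions and $p$ is finite, it suffices to establish each bound for a single Brownian motion $B := B^{(i)}$; intersecting the $p$ resulting almost-sure events and taking $n_0(\epsilon)$ to be the largest of the $p$ thresholds then yields the lemma uniformly in $i$.

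For the third bound I would invoke the law of the iterated logarithm. Since $\limsup_{t\to\infty} B(t)/\sqrt{2t\log\log t} = 1$ almost surely and, by $B \overset{d}{=} -B$, $\liminf_{t\to\infty} B(t)/\sqrt{2t\log\log t} = -1$ almost surely, we get $\limsup_{t\to\infty} |B(t)|/\sqrt{2t\log\log t} = 1$ almost surely; specializing to integer $t=n$ gives $|B(n)| < (1+\epsilon)\sqrt{2n\log\log n}$ for all $n \ge n_0(\epsilon)$.

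The first bound is the heart of the lemma and is the upper half of the increment theorem of \citet{csor:reve:1981} for a Wiener process. Writing $\beta(n) = (2 b_n(\log(n/b_n) + \log\log n))^{1/2}$ and $M_n = \sup_{0\le t\le n-b_n}\sup_{0\le s\le b_n}|B(t+s)-B(t)|$, the target is $\limsup_n M_n/\beta(n) \le 1$ almost surely. First I would bound a single window using the reflection principle: because $\{B(t+s)-B(t): 0\le s\le h\}$ is a Brownian motion in $s$, $\Pr[\sup_{0\le s\le h}|B(t+s)-B(t)| \ge x] \le 4\exp(-x^2/(2h))$. Next I would replace the continuous supremum over starting times by a maximum over a grid $t_j = j\eta_n$, with mesh $\eta_n = b_n (b_n/n)^{\alpha}$ for a small $\alpha > 0$, so that the inter-grid oscillation, itself controlled by the same reflection bound, is of order $(b_n/n)^{\alpha/2}\beta(n) = o(\beta(n))$ and hence absorbed into the factor $(1+\epsilon)$. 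Applying a union bound over the $O((n/b_n)^{1+\alpha})$ grid points at level $x = (1+\epsilon)\beta(n)$ and evaluating along the geometric subsequence $n_k = \lfloor \theta^k\rfloor$ gives $\sum_k \Pr[M_{n_k} > (1+\epsilon)\beta(n_k)] < \infty$, whence Borel--Cantelli yields the bound along $n_k$. Finally I would interpolate to all $n$: for $n\in[n_k,n_{k+1}]$ the monotonicity of $b_n$ from Condition \prettyref{Scond:bn_conditions}\prettyref{Scond:bn_to_n} gives $M_n \le M_{n_{k+1}}$, while $\beta(n_{k+1})/\beta(n_k)$ can be driven arbitrarily close to $1$ by taking $\theta \downarrow 1$, so the subsequential bound transfers to the full sequence at the cost of enlarging $\epsilon$. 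The second bound is the same estimate localized at the endpoint: $\sup_{0\le s\le b_n}|B(n)-B(n-s)|$ is a single window increment, so the reflection estimate, the Borel--Cantelli step along $n_k$, and the interpolation apply verbatim, with the discretization over starting times no longer needed.

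The main obstacle is recovering the sharp leading constant $(1+\epsilon)$ rather than something of order $2$. A naive two-block decomposition of a length-$b_n$ window loses a factor of $2$, so the constant must instead come from balancing two competing requirements on the grid: the union-bound penalty contributes a factor $(n/b_n)^{1+\alpha}$, while the Gaussian tail at level $(1+\epsilon)\beta(n)$ contributes $(n/b_n)^{-(1+\epsilon)^2}(\log n)^{-(1+\epsilon)^2}$, so the power of $n/b_n$ must not grow, forcing $1+\alpha \le (1+\epsilon)^2$, after which the surviving factor $(\log n_k)^{-(1+\epsilon)^2}$ is summable because $(1+\epsilon)^2 > 1$; simultaneously the oscillation control forces $\alpha > 0$. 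Verifying that these constraints are compatible for every $\epsilon>0$ and that the resulting $\eta_n$ respects the monotonicity hypotheses of Condition \prettyref{Scond:bn_conditions}\prettyref{Scond:bn_to_n} is precisely where the assumptions on $b_n$ are used.
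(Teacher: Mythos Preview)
The paper does not supply a proof of this lemma: it is quoted verbatim as a result of \citet{csor:reve:1981} and used as a black box in the proof of Theorem~\ref{thm:mbm_general}. So there is no ``paper's own proof'' to compare against; your proposal is effectively a sketch of the original Cs\"org\H{o}--R\'ev\'esz argument (their Theorem~1.2.1 together with the classical law of the iterated logarithm), and as such it is sound.

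A few remarks on the sketch itself. Your handling of the sharp constant is the correct diagnosis: the naive block decomposition loses a factor of~$2$, and the way to avoid it is exactly the balance you describe between the grid cardinality $(n/b_n)^{1+\alpha}$ and the Gaussian tail exponent $(1+\epsilon)^2$, with the summability along $n_k=\lfloor\theta^k\rfloor$ coming from $(\log n_k)^{-(1+\epsilon)^2}\asymp k^{-(1+\epsilon)^2}$. The interpolation step is also fine once you observe that Condition~\ref{cond:bn_conditions}\ref{cond:bn_to_n} forces $1\le b_{n_{k+1}}/b_{n_k}\le n_{k+1}/n_k\approx\theta$ (since both $b_n$ and $n/b_n$ are nondecreasing), which is what makes $\beta(n_{k+1})/\beta(n_k)\to 1$ as $\theta\downarrow 1$; you might make that dependence on the condition explicit. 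Finally, the second display does \emph{not} follow from the first with the sharp constant (substituting $t=n-b_n$ and using the triangle inequality costs a factor of~$2$), so treating it as a separate single-window estimate, as you do, is the right call.
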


\begin{corollary}[\citet{dame:1994}]
Suppose Condition \prettyref{Scond:bn_conditions}\prettyref{Scond:bn_to_n} holds, then for all $\epsilon >0$ and for almost all sample paths, there exists $n_0(\epsilon)$ such that for all $n \geq n_0$ and all $i = 1, \dots, p$
\[ |\bar{B}_k^{(i)}(b_n)| \leq \dfrac{\sqrt{2}}{\sqrt{b_n}} (1+\epsilon) \left( \log \dfrac{n}{b_n} + \log \log n\right)^{1/2}. \] 
\end{corollary}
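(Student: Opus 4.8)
The plan is to obtain this bound as an immediate consequence of Lemma~\ref{bn.diff.bound}, since $\bar{B}_k^{(i)}(b_n)$ is nothing but the increment of the univariate Brownian motion $B^{(i)}$ over the batch window $[kb_n,(k+1)b_n]$, rescaled by $b_n^{-1}$. First I would recall the definition $\bar{B}_k^{(i)} = b_n^{-1}\bigl(B^{(i)}((k+1)b_n)-B^{(i)}(kb_n)\bigr)$, so that $|\bar{B}_k^{(i)}| = b_n^{-1}\,\bigl|B^{(i)}(t+s)-B^{(i)}(t)\bigr|$ with the identifications $t=kb_n$ and $s=b_n$.

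The only point requiring a moment's care is verifying that every batch window falls inside the range over which the supremum in Lemma~\ref{bn.diff.bound} is taken, namely $0\le t\le n-b_n$ and $0\le s\le b_n$. Since $n=a_nb_n$ and $k$ runs over $0,\dots,a_n-1$, we have $t=kb_n\le (a_n-1)b_n = n-b_n$ and $s=b_n$, so the pair $(t,s)$ is admissible for every $k$ at once. Hence the threshold $n_0(\epsilon)$ supplied by the lemma is valid uniformly in $k$ and in the component index $i$, which is exactly the uniform statement the corollary asserts.

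With this in hand I would apply the first displayed inequality of Lemma~\ref{bn.diff.bound}: for almost all sample paths and all $n\ge n_0(\epsilon)$,
\[\bigl|B^{(i)}((k+1)b_n)-B^{(i)}(kb_n)\bigr| < (1+\epsilon)\Bigl(2b_n\bigl(\log\tfrac{n}{b_n}+\log\log n\bigr)\Bigr)^{1/2}.\]
Dividing through by $b_n$ and simplifying $b_n^{-1}\sqrt{2b_n}=\sqrt{2}/\sqrt{b_n}$ yields
\[|\bar{B}_k^{(i)}(b_n)| < \frac{\sqrt{2}}{\sqrt{b_n}}(1+\epsilon)\Bigl(\log\tfrac{n}{b_n}+\log\log n\Bigr)^{1/2},\]
which is the claimed estimate, with the strict inequality being slightly stronger than the stated $\le$.

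Honestly there is no real obstacle here: all of the probabilistic content is already packaged inside the increment bound of Lemma~\ref{bn.diff.bound}, and the corollary is merely a normalization by $b_n^{-1}$ together with the bookkeeping check that the windows $[kb_n,(k+1)b_n]$ lie in $[0,n]$. The two things to be careful about are to take $s=b_n$ exactly (so that the full batch length, rather than a shorter increment, enters the bound) and to invoke the lemma under Condition~\prettyref{Scond:bn_conditions}\prettyref{Scond:bn_to_n}, which guarantees $b_n\to\infty$ and $n/b_n\to\infty$ and is precisely the hypothesis Lemma~\ref{bn.diff.bound} requires.
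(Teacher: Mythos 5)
Your proposal is correct and is exactly the argument the paper intends: the corollary is stated as an immediate consequence of Lemma~\ref{bn.diff.bound}, obtained by taking $t = kb_n$, $s = b_n$ (admissible since $kb_n \le (a_n-1)b_n = n - b_n$) and dividing the increment bound by $b_n$, so that $b_n^{-1}\sqrt{2b_n} = \sqrt{2}/\sqrt{b_n}$ gives the stated estimate uniformly in $k$ and $i$. The paper itself leaves this derivation implicit (citing Damerdji, 1994), and your write-up, including the check that every batch window falls inside the supremum's range, supplies precisely the missing bookkeeping.
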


Recall $L$ in \prettyref{Seq:sip}  and set $\Sigma := LL^{T}$. Define
$C(t) = LB(t)$ and if $C^{(i)}(t)$ is the $i$th component of $C(t)$, define
\[\bar{C}^{(i)}_k  := \dfrac{1}{b_n} \left(C^{(i)}((k+1)b_n) - C^{(i)}(kb_n)
\right) \quad \text{ and } \quad \bar{C}^{(i)}(n)  := \dfrac{1}{n} C^{(i)}(n).\]
Since $C^{(i)}(t) \sim N(0, t \Sigma_{ii})$, where $\Sigma_{ii}$ is the $i$th diagonal of $\Sigma$, $C^{(i)}/\sqrt{\Sigma_{ii}}$ is a 1-dimensional Brownian motion. As a consequence, we have the following corollaries of Lemma \ref{bn.diff.bound}.

\begin{corollary}\label{cor:iter}
For all $\epsilon > 0$ and for almost all sample paths there exists $n_0(\epsilon)$ such that for all $n \geq n_0$ and all $i = 1, \dots,p$
\begin{equation*}
\left| C^{(i)} (n) \right| < (1+ \epsilon) \left[2 \Sigma_{ii} n \log \log n \right]^{1/2},
\end{equation*}
where $\Sigma_{ii}$ is the $i$th diagonal of $\Sigma$.
\end{corollary}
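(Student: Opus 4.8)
The plan is to deduce the bound on $|C^{(i)}(n)|$ directly from the law of the iterated logarithm already recorded as the third inequality of Lemma~\ref{bn.diff.bound}, using that each coordinate of $C(t) = LB(t)$ is a scaled one-dimensional Brownian motion. Since $\Sigma = LL^T$ is positive definite, each diagonal entry $\Sigma_{ii} > 0$, so for a fixed $i \in \{1, \dots, p\}$ one may set $\tilde{B}^{(i)}(t) := C^{(i)}(t)/\sqrt{\Sigma_{ii}}$. As noted just before the statement, $C^{(i)}(t) \sim N(0, t\Sigma_{ii})$; more precisely $\tilde{B}^{(i)}$ is a continuous centered Gaussian process with stationary independent increments and $\Var(\tilde{B}^{(i)}(t)) = t$, hence a standard one-dimensional Brownian motion.

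First I would apply the third inequality of Lemma~\ref{bn.diff.bound} to $\tilde{B}^{(i)}$. Although that lemma is stated for the coordinate processes $B^{(i)}$ of the driving Brownian motion, its third bound is precisely the upper law of the iterated logarithm, which holds for any standard one-dimensional Brownian motion and in particular for $\tilde{B}^{(i)}$. Thus for every $\epsilon > 0$ and almost every sample path there is $n_0^{(i)}(\epsilon)$ such that
\[
|\tilde{B}^{(i)}(n)| < (1+\epsilon)\sqrt{2 n \log\log n} \qquad \text{for all } n \geq n_0^{(i)}(\epsilon).
\]
Multiplying through by $\sqrt{\Sigma_{ii}}$ then gives, on the same event and for the same range of $n$,
\[
|C^{(i)}(n)| = \sqrt{\Sigma_{ii}}\,|\tilde{B}^{(i)}(n)| < (1+\epsilon)\left[2\Sigma_{ii}\, n \log\log n\right]^{1/2}.
\]

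To obtain a single threshold valid for all components simultaneously, I would set $n_0(\epsilon) = \max_{1 \leq i \leq p} n_0^{(i)}(\epsilon)$; because $p$ is finite, the finite union of the associated exceptional null sets is still null, so the displayed bound holds for every $i = 1, \dots, p$ and every $n \geq n_0(\epsilon)$ on a set of full measure. There is no genuine obstacle here: the only points requiring care are the reduction step---checking that $C^{(i)}/\sqrt{\Sigma_{ii}}$ really is a standard Brownian motion so that the iterated-logarithm bound transfers---and the routine bookkeeping of taking a finite maximum over the $p$ components to secure uniformity in $i$.
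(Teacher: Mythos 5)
Your proposal is correct and matches the paper's argument exactly: the paper likewise observes that $C^{(i)}/\sqrt{\Sigma_{ii}}$ is a standard one-dimensional Brownian motion (since $C^{(i)}(t) \sim N(0, t\Sigma_{ii})$ with $\Sigma = LL^T$) and then invokes the third inequality of Lemma~\ref{bn.diff.bound}, rescaling by $\sqrt{\Sigma_{ii}}$. Your additional care in verifying the reduction step and taking the maximum of the thresholds $n_0^{(i)}(\epsilon)$ over the finitely many components is exactly the routine bookkeeping the paper leaves implicit.
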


\begin{corollary}\label{cor:seconditer}
For all $\epsilon > 0$ and for almost all sample paths, there exists $n_0(\epsilon)$ such that for all $n \geq n_0$ and all $i = 1, \dots, p$
\begin{equation*}
\left|\bar{C}_k^{(i)}  \right| \leq \sqrt{\dfrac{2 \Sigma_{ii}}{b_n}} (1+\epsilon) \left( \log \dfrac{n}{b_n} + \log \log n \right)^{1/2},
\end{equation*}
where $\Sigma_{ii}$ is the $i$th diagonal of $\Sigma$.
\end{corollary}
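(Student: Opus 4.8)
The plan is to obtain the bound by pure scaling from the batch-mean estimate of \citet{dame:1994} stated above, which is formulated for a \emph{standard} one-dimensional Brownian motion. The crucial observation, already recorded in the text preceding the statement, is that $W^{(i)}(t) := C^{(i)}(t)/\sqrt{\Sigma_{ii}}$ is a standard one-dimensional Brownian motion, so it may be substituted for $B^{(i)}$ in the earlier results.

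First I would peel off the scale factor using linearity of the batch-mean operator in the underlying path. Since $C^{(i)}(t) = \sqrt{\Sigma_{ii}}\,W^{(i)}(t)$, the definition of $\bar{C}_k^{(i)}$ gives
\[
\bar{C}_k^{(i)} = \frac{1}{b_n}\bigl(C^{(i)}((k+1)b_n) - C^{(i)}(kb_n)\bigr) = \sqrt{\Sigma_{ii}}\,\frac{1}{b_n}\bigl(W^{(i)}((k+1)b_n) - W^{(i)}(kb_n)\bigr) = \sqrt{\Sigma_{ii}}\,\bar{W}_k^{(i)},
\]
so that $|\bar{C}_k^{(i)}| = \sqrt{\Sigma_{ii}}\,|\bar{W}_k^{(i)}|$.

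Next, because $W^{(i)}$ is a standard one-dimensional Brownian motion and Condition \prettyref{Scond:bn_conditions}\prettyref{Scond:bn_to_n} holds, the \citet{dame:1994} batch-mean bound applies to $W^{(i)}$ verbatim: for every $\epsilon>0$ and almost every sample path there is a threshold beyond which $|\bar{W}_k^{(i)}| \le (\sqrt{2}/\sqrt{b_n})(1+\epsilon)(\log(n/b_n)+\log\log n)^{1/2}$. Multiplying through by $\sqrt{\Sigma_{ii}}$ and absorbing the scale as $\sqrt{\Sigma_{ii}}\cdot\sqrt{2}/\sqrt{b_n} = \sqrt{2\Sigma_{ii}/b_n}$ reproduces exactly the claimed inequality.

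The result is therefore essentially a change of scale, and there is no substantive obstacle; the only bookkeeping is uniformity in the component index. The threshold $n_0(\epsilon)$ supplied by the one-dimensional result depends on $\epsilon$, on the sample path, and in principle on $i$, so I would take the maximum of the $p$ thresholds over $i=1,\dots,p$. This maximum is finite and furnishes a single $n_0(\epsilon)$ valid for all components simultaneously on a set of probability one, completing the argument.
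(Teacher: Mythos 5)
Your proposal is correct and matches the paper's argument: the paper likewise observes that $C^{(i)}/\sqrt{\Sigma_{ii}}$ is a standard one-dimensional Brownian motion and obtains the corollary by applying the \citet{dame:1994} batch-mean bound to it and rescaling by $\sqrt{\Sigma_{ii}}$. Your explicit handling of uniformity over the finitely many components $i=1,\dots,p$ (taking the maximum of the thresholds) is a detail the paper leaves implicit but is the same in substance.
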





We finally come to the last leg of the proof, where we will show that for the $(i,j)$th element of $\Sigma_n$, $|\Sigma_{n,ij} - \Sigma_{ij}| \to 0$ with probability 1 as $n \to \infty$. 

Recall $Y_t = g(X_t)$. Let $Y_t^{(i)}$ be the $i$th component of $Y_t$. Define for each $i = 1, \dots, p$, the process $V_l^{(i)} = Y_l^{(i)} - \theta_i$ for $l = 1, 2, \dots$. Further, for $k = 0, \dots, a_n - 1$ and $j = 1, \dots, p$ define 
\[ \bar{V}_k^{(i)} = \dfrac{1}{b_n} \ds \sum_{l=1}^{b_n} V^{(i)}_{kb_n + l} \quad  \text{ and } \quad \bar{V}^{(i)}(n) = \dfrac{1}{n} \ds \sum_{l=1}^{n} V_l^{(i)}.\]

Then
\begin{equation*}\label{mainterm}
\Sigma_{n,ij} = \dfrac{b_n}{a_n - 1} \ds \sum_{k=0}^{a_n - 1} \left[\bar{V}_k^{(i)} - \bar{V}^{(i)}(n) \right]\left[\bar{V}_k^{(j)} - \bar{V}^{(j)}(n) \right].
\end{equation*}

We will show that $\left|\Sigma_{n,ij} - \Sigma_{ij}\right| \to 0$ w.p. 1 as $n \to \infty$.
\begin{align*}
&\left|\Sigma_{n,ij} - \Sigma_{ij} \right|\\
& = \left| \dfrac{b_n}{a_n-1} \ds \sum_{k=0}^{a_n - 1} \left[\bar{V}^{(i)}_k - \bar{V}^{(i)}(n) \right]\left[\bar{V}^{(j)}_k - \bar{V}^{(j)}(n) \right] - \Sigma_{ij} \right|\\
& = \left| \dfrac{b_n}{a_n-1} \ds \sum_{k=0}^{a_n - 1} \left[\bar{V}^{(i)}_k - \bar{V}^{(i)}(n) \pm \bar{C}_k^{(i)} \pm \bar{C}^{(i)}(n) \right]\left[\bar{V}^{(j)}_k - \bar{V}^{(j)}(n)\pm \bar{C}_k^{(j)} \pm \bar{C}^{(j)}(n) \right] - \Sigma_{ij} \right|\\
& = \bigg|\dfrac{b_n}{a_n-1} \ds \sum_{k=0}^{a_n-1} \left[\left(\bar{V}_k^{(i)} - \bar{C}_k^{(i)}\right) - \left(\bar{V}^{(i)}(n) - \bar{C}^{(i)}(n)\right) + \left(\bar{C}_k^{(i)} - \bar{C}^{(i)}(n)  \right) \right]\\
& \qquad \left[\left(\bar{V}_k^{(j)} - \bar{C}_k^{(j)}\right) - \left(\bar{V}^{(j)}(n) - \bar{C}^{(j)}(n)\right) + \left(\bar{C}_k^{(j)} - \bar{C}^{(j)}(n)  \right) \right]  - \Sigma_{ij} \bigg| \\
& \leq \left|\dfrac{b_n}{a_n - 1} \ds \sum_{k=0}^{a_n-1} \left[\bar{C}_k^{(i)} - \bar{C}^{(i)}(n)  \right] \left[\bar{C}_k^{(j)} - \bar{C}^{(j)}(n)  \right] - \Sigma_{ij}\right|\\
& + \dfrac{b_n}{a_n-1} \ds \sum_{k=0}^{a_n-1} \bigg[\left|\left(\bar{V}^{(i)}_k - \bar{C}^{(i)}_k\right)\left(\bar{V}^{(j)}_k - \bar{C}^{(j)}_k\right) \right| \\
& + \left|\left(\bar{V}^{(i)}(n) - \bar{C}^{(i)}(n)\right) \left(\bar{V}^{(j)}(n) - \bar{C}^{(j)}(n)\right)\right| \\
& + \left| \left(\bar{V}^{(i)}_k - \bar{C}^{(i)}_k \right)  \left(\bar{V}^{(j)}(n) - \bar{C}^{(j)}(n) \right) \right|+ \left|\left(\bar{V}^{(i)}(n) - \bar{C}^{(i)}(n) \right)  \left( \bar{V}^{(j)}_k - \bar{C}^{(j)}_k \right)  \right|\\
& + \left| \left( \bar{V}^{(i)}_k - \bar{C}^{(i)}_k \right)\bar{C}^{(j)}_k \right| + \left| \left( \bar{V}^{(j)}_k - \bar{C}^{(j)}_k \right)\bar{C}^{(i)}_k \right|\\
& + \left| \left( \bar{V}^{(i)}_k - \bar{C}^{(i)}_k \right)\bar{C}^{(j)}(n) \right| + \left| \left( \bar{V}^{(j)}_k - \bar{C}^{(j)}_k \right)\bar{C}^{(i)}(n) \right|\\
& + \left| \left( \bar{V}^{(i)}(n) - \bar{C}^{(i)}(n) \right)\bar{C}^{(j)}_k \right| + \left| \left( \bar{V}^{(j)}(n) - \bar{C}^{(j)}(n) \right)\bar{C}^{(i)}_k \right|\\
& + \left| \left( \bar{V}^{(i)}(n) - \bar{C}^{(i)}(n) \right)\bar{C}^{(j)}(n) \right| + \left| \left( \bar{V}^{(j)}(n) - \bar{C}^{(j)}(n) \right)\bar{C}^{(i)}(n) \right| \bigg].
\end{align*}
We will show that each of the 13 terms above tends to 0 w.p. 1 as $n \to \infty$.

\begin{enumerate}[1.]
\item Notice that, \[ \dfrac{b_n}{a_n - 1} \ds \sum_{k=0}^{a_n-1} \left[\bar{C}_k^{(i)} - \bar{C}^{(i)}(n)  \right] \left[\bar{C}_k^{(j)} - \bar{C}^{(j)}(n)  \right], \]
is the $(i, j)$th entry in $L\widetilde{\Sigma}L^T$. Thus, by Corollary \ref{cor:l_sigmtilde_lt_to_sigma}, with probability 1 as $n\to \infty$,
\[\left|\dfrac{b_n}{a_n - 1} \ds \sum_{k=0}^{a_n-1} \left[\bar{C}_k^{(i)} - \bar{C}^{(i)}(n)  \right] \left[\bar{C}_k^{(j)} - \bar{C}^{(j)}(n)  \right] - \Sigma_{ij}  \right|  \to 0. \]

\item By Condition 1
\[\left\| \ds \sum_{l = 0}^{n} V_l - LB(n) \right\|< D \gamma(n)\text{ w.p. 1, } \]
where $V_l = \left(V_l^{(1)}, \dots, V_l^{(p)} \right)$. Hence, for components $i$ and $j$
\begin{equation}\label{Dbound}
\left| \ds \sum_{l=1}^{n} V_l^{(i)} - C^{(i)}(n) \right| < D \gamma(n) \quad \text{ and } \quad  \left|\ds \sum_{l=1}^{n} V_l^{(j)} - C^{(j)}(n) \right| < D \gamma(n).
\end{equation}

Note that,
\begin{align}\label{two1}
\left|\bar{V}_k^{(i)} - \bar{C}_k^{(i)}\right| &=  \left|\dfrac{1}{b_n} \left[\ds \sum_{l=1}^{(k+1)b_n} V_l^{(i)} - C^{(i)}((k+1)b_n)  \right] - \dfrac{1}{b_n} \left[\ds \sum_{l=1}^{kb_n}V_l^{(i)} - C^{(i)}(kb_n)  \right] \right| \nonumber \\
& \leq  \dfrac{1}{b_n} \left[\left| \ds \sum_{l=1}^{(k+1)b_n} V_l^{(i)} - C^{(i)}((k+1)b_n)  \right| +  \left| \ds \sum_{l=1}^{kb_n}V_l^{(i)} - C^{(i)}(kb_n)  \right| \right] \nonumber \\
& \leq \dfrac{2}{b_n} D \gamma(n).
\end{align}

Similarly 
\begin{equation}\label{two2}
\left|\bar{V}_k^{(j)} - \bar{C}_k^{(j)}\right| \leq \dfrac{2}{b_n} D \gamma(n).
\end{equation}

Thus, using (\ref{two1}) and (\ref{two2}),
\begin{align*}
\dfrac{b_n}{a_n-1} \ds\sum_{k=0}^{a_n-1} \left|\left(\bar{V}^{(i)}_k - \bar{C}^{(i)}_k\right)\left(\bar{V}^{(j)}_k - \bar{C}^{(j)}_k\right) \right|& \leq \dfrac{b_n}{a_n-1} a_n \dfrac{4D^2}{b_n^2} \gamma(n)^2\\
& \leq 4D^2 \dfrac{a_n}{a_n-1} \dfrac{\log n}{b_n} \gamma(n)^2\\
& \to 0 \text{ w.p 1 as } n \to \infty.
\end{align*}

\item  By (\ref{Dbound}), we get
\begin{align}\label{three1}
\left| \bar{V}^{(i)}(n) - \bar{C}^{(i)}(n)  \right|& = \dfrac{1}{n} \left|\ds \sum_{l=1}^{n} V_l^{(i)} - C^{(i)}(n)  \right|  < D \dfrac{\gamma(n)}{n}.
\end{align}

Similarly ,
\begin{equation}\label{three2}
\left|\bar{V}^{(i)}(n) - \bar{C}^{(i)}(n) \right|<  D \dfrac{\gamma(n)}{n}.
\end{equation}

Then,
\begin{align*}
& \dfrac{b_n}{a_n-1} \ds \sum_{k=0}^{a_n-1} \left|\left( \bar{V}^{(i)}(n) - \bar{C}^{(i)}(n) \right)\left( \bar{V}^{(j)}(n) - \bar{C}^{(j)}(n) \right) \right| \\
& < \dfrac{b_n}{a_n-1} a_n D^2 \dfrac{\gamma(n)^2}{n^2}\\
& = D^2\dfrac{a_n}{a_n-1} \dfrac{b_n}{n} \dfrac{b_n}{n} \dfrac{\gamma(n)^2}{b_n}\\
& < D^2\dfrac{a_n}{a_n-1} \dfrac{b_n}{n} \dfrac{b_n}{n} \dfrac{\gamma(n)^2 \log n}{b_n}\\
& \to 0 \text{ w.p 1 as } n \to \infty \text{ by Condition \prettyref{Scond:bn_conditions}\prettyref{Scond:bn_to_n}}.
\end{align*}

\item By \eqref{two1} and \eqref{three2}, we have
\begin{align*}
&\dfrac{b_n}{a_n-1} \ds \sum_{k=0}^{a_n-1} \left| \left(\bar{V}^{(i)}_k - \bar{C}^{(i)}_k \right)  \left(\bar{V}^{(j)}(n) - \bar{C}^{(j)}(n) \right) \right| \\
& \leq \dfrac{b_n}{a_n-1} a_n \left( \dfrac{2D}{b_n} \gamma(n) \right) \left( \dfrac{D}{n} \gamma(n)  \right)\\
& < 2 D^2 \dfrac{a_n}{a_n-1} \dfrac{b_n}{n} \dfrac{\gamma(n)^2 \log n}{b_n}\\
& \to 0 \text{ w.p. 1 as }  n \to \infty \text{ by Condition \prettyref{Scond:bn_conditions}\prettyref{Scond:bn_to_n}}.
\end{align*}

\item By (\ref{two2}) and (\ref{three1}), we have
\begin{align*}
& \dfrac{b_n}{a_n-1} \ds \sum_{k=0}^{a_n-1} \left| \left(\bar{V}^{(i)}(n) - \bar{C}^{(i)}(n) \right)\left(\bar{V}^{(j)}_k - \bar{C}^{(j)}_k \right)   \right| \\
& \leq \dfrac{b_n}{a_n-1} a_n \left( \dfrac{2D}{b_n} \gamma(n) \right) \left( \dfrac{D}{n} \gamma(n) \right)\\
& < 2 D^2 \dfrac{a_n}{a_n-1} \dfrac{b_n}{n} \dfrac{\gamma(n)^2 \log n}{b_n}\\
& \to 0 \text{ w.p. 1 as }  n \to \infty \text{ by Condition \prettyref{Scond:bn_conditions}\prettyref{Scond:bn_to_n}}.
\end{align*}

\item By Corollary \ref{cor:seconditer} and (\ref{two1})
\begin{align*}
& \dfrac{b_n}{a_n-1} \ds \sum_{k=0}^{a_n-1} \left| \left(\bar{V}^{(i)}_k - \bar{C}^{(i)}_k  \right)\bar{C}^{(j)}_k \right| \\
& <  \dfrac{b_n}{a_n-1} a_n \left( \dfrac{2D}{b_n} \gamma(n) \right) \left( \sqrt{\dfrac{2\Sigma_{ii}}{b_n}} (1+ \epsilon) \left(\log \dfrac{n}{b_n} + \log \log n  \right)^{1/2} \right)\\
& < 2^{3/2}\Sigma_{ii}^{1/2} D (1+\epsilon)\dfrac{a_n}{a_n-1} \dfrac{\gamma(n)}{\sqrt{b_n}} (2 \log n)^{1/2} \\
& \to 0 \text{ w.p. 1 as }  n \to \infty \text{ by Condition \prettyref{Scond:bn_conditions}\prettyref{Scond:bn_to_n}}.
\end{align*}

\item By Corollary \ref{cor:seconditer} and (\ref{two2})
\begin{align*}
& \dfrac{b_n}{a_n-1} \ds \sum_{k=0}^{a_n-1} \left| \left(\bar{V}^{(j)}_k - \bar{C}^{(j)}_k  \right)\bar{C}^{(i)}_k \right| \\
& <  \dfrac{b_n}{a_n-1} a_n \left( \dfrac{2D}{b_n} \gamma(n) \right) \left( \sqrt{\dfrac{2\Sigma_{ii}}{b_n}} (1+ \epsilon) \left(\log \dfrac{n}{b_n} + \log \log n  \right)^{1/2} \right)\\
& < 2^{3/2}\Sigma_{ii}^{1/2} D (1+\epsilon)\dfrac{a_n}{a_n-1} \dfrac{\gamma(n)}{\sqrt{b_n}} (2 \log n)^{1/2} \\
& \to 0 \text{ w.p. 1 as }  n \to \infty \text{ by Condition \prettyref{Scond:bn_conditions}\prettyref{Scond:bn_to_n}}.
\end{align*}

\item  By Corollary \ref{cor:iter} and (\ref{two1})
\begin{align*}
& \dfrac{b_n}{a_n-1} \ds \sum_{k=0}^{a_n-1} \left|\left(\bar{V}^{(i)}_k - \bar{C}^{(i)}_k  \right)\bar{C}^{(j)}(n) \right| \\
& <  \dfrac{b_n}{a_n-1} a_n \left( \dfrac{2D}{b_n} \gamma(n) \right) \left( \dfrac{1}{n} (1+ \epsilon) \left( 2 \Sigma_{ii} n\log \log n \right)^{1/2} \right)\\
& < 2^{3/2} D \sqrt{\Sigma_{ii}} (1+ \epsilon) \dfrac{a_n}{a_n-1} \dfrac{ \gamma(n)(\log n)^{1/2}}{n^{1/2}}\\
& = 2^{3/2} D \sqrt{\Sigma_{ii}} (1+ \epsilon) \dfrac{a_n}{a_n-1} \left(\dfrac{b_n}{n}\right)^{1/2} \dfrac{\gamma(n) (\log n)^{1/2}}{b_n^{1/2}}\\
& \to 0 \text{ w.p. 1 as }  n \to \infty \text{ by Condition \prettyref{Scond:bn_conditions}\prettyref{Scond:bn_to_n}}.
\end{align*}

\item  By Corollary \ref{cor:iter} and (\ref{two2})
\begin{align*}
& \dfrac{b_n}{a_n-1} \ds \sum_{k=0}^{a_n-1} \left|\left(\bar{V}^{(j)}_k - \bar{C}^{(j)}_k  \right)\bar{C}^{(i)}(n) \right| \\
& < \dfrac{b_n}{a_n-1} a_n \left( \dfrac{2D}{b_n} \gamma(n) \right) \left( \dfrac{1}{n} (1+ \epsilon) \left( 2 \Sigma_{ii} n\log \log n \right)^{1/2} \right)\\
& < 2^{3/2} D \sqrt{\Sigma_{ii}} (1+ \epsilon) \dfrac{a_n}{a_n-1} \dfrac{ \gamma(n)(\log n)^{1/2}}{n^{1/2}}\\
& = 2^{3/2} D \sqrt{\Sigma_{ii}} (1+ \epsilon) \dfrac{a_n}{a_n-1} \left(\dfrac{b_n}{n}\right)^{1/2} \dfrac{\gamma(n) (\log n)^{1/2}}{b_n^{1/2}}\\
& \to 0 \text{ w.p. 1 as }  n \to \infty \text{ by Condition \prettyref{Scond:bn_conditions}\prettyref{Scond:bn_to_n}}.
\end{align*}

\item By (\ref{three1}) and Corollary \ref{cor:seconditer}
\begin{align*}
& \dfrac{b_n}{a_n-1}\ds \sum_{k=0}^{a_n-1} \left|\left(\bar{V}^{(i)}(n) - \bar{C}^{(i)}(n) \right) \bar{C}^{(j)}_k \right| \\
& <  \dfrac{b_n}{a_n-1} a_n \left(\dfrac{D}{n} \gamma(n) \right) \left(\sqrt{\dfrac{2\Sigma_{ii}}{b_n}} (1+\epsilon) \left(\log \dfrac{n}{b_n} + \log \log n\right)^{1/2} \right)\\
& < \sqrt{2\Sigma_{ii}} D (1+ \epsilon) \dfrac{a_n}{a_n-1} \dfrac{b_n}{n} \gamma(n) \left(\dfrac{2}{b_n} \log n   \right)^{1/2}\\
& = 2^{3/2} \Sigma_{ii}^{1/2} D (1+\epsilon) \dfrac{a_n}{a_n - 1} \dfrac{b_n}{n} \dfrac{\gamma(n) (\log n)^{1/2}}{b_n^{1/2}}\\
& \to 0 \text{ w.p. 1 as }  n \to \infty \text{ by Condition \prettyref{Scond:bn_conditions}\prettyref{Scond:bn_to_n}}.
\end{align*}

\item By (\ref{three2}) and Corollary \ref{cor:seconditer}
\begin{align*}
& \dfrac{b_n}{a_n-1}\ds \sum_{k=0}^{a_n-1} \left|\left(\bar{V}^{(j)}(n) - \bar{C}^{(j)}(n) \right) \bar{C}^{(i)}_k \right| \\
& <  \dfrac{b_n}{a_n-1} a_n \left(D \dfrac{\gamma(n)}{n} \right) \left(\sqrt{\dfrac{2\Sigma_{ii}}{b_n}} (1+\epsilon) \left(\log \dfrac{n}{b_n} + \log \log n\right)^{1/2} \right)\\
& < \sqrt{2\Sigma_{ii}} D (1+ \epsilon) \dfrac{a_n}{a_n-1} \dfrac{b_n}{n} \gamma(n) \left(\dfrac{2}{b_n} \log n   \right)^{1/2}\\
& = 2^{3/2} \Sigma_{ii}^{1/2} D (1+\epsilon) \dfrac{a_n}{a_n - 1} \dfrac{b_n}{n} \dfrac{\gamma(n) (\log n)^{1/2}}{b_n^{1/2}}\\
& \to 0 \text{ w.p. 1 as }  n \to \infty \text{ by Condition \prettyref{Scond:bn_conditions}\prettyref{Scond:bn_to_n}}.
\end{align*}

\item By (\ref{three1}) and Corollary \ref{cor:iter}
\begin{align*}
& \dfrac{b_n}{a_n-1} \ds \sum_{k=0}^{a_n-1}\left|\left(\bar{V}^{(i)}(n) - \bar{C}^{(i)}(n)\right) \bar{C}^{(j)}(n) \right| \\
& < \dfrac{b_n}{a_n-1} a_n \left( \dfrac{D}{n} \gamma(n) \right) \left( \dfrac{1}{n}(1+ \epsilon) (2\Sigma_{ii} n \log \log n)^{1/2} \right)\\
& < \sqrt{2\Sigma_{ii}} D (1+ \epsilon) \dfrac{a_n}{a_n-1} \dfrac{b_n}{n} \dfrac{\gamma(n) (\log n)^{1/2}}{n^{1/2}} \\
& \to 0 \text{ w.p. 1 as }  n \to \infty \text{ by Condition \prettyref{Scond:bn_conditions}\prettyref{Scond:bn_to_n}}.
\end{align*}

\item By (\ref{three2}) and Corollary \ref{cor:iter}
\begin{align*}
& \dfrac{b_n}{a_n-1} \ds \sum_{k=0}^{a_n-1}\left|\left(\bar{V}^{(j)}(n) - \bar{C}^{(j)}(n)\right) \bar{C}^{(i)}(n) \right| \\
& < \dfrac{b_n}{a_n-1} a_n \left( D \dfrac{\gamma(n)}{n}  \right) \left( \dfrac{1}{n}(1+ \epsilon) (2\Sigma_{ii} n \log \log n)^{1/2} \right)\\
& < \sqrt{2\Sigma_{ii}} D (1+ \epsilon) \dfrac{a_n}{a_n-1} \dfrac{b_n}{n} \dfrac{\gamma(n) (\log n)^{1/2}}{n^{1/2}} \\
& \to 0 \text{ w.p. 1 as }  n \to \infty \text{ by Condition \prettyref{Scond:bn_conditions}\prettyref{Scond:bn_to_n}}.
\end{align*}
\end{enumerate}
Thus, each of the 13 terms tends to 0 with probability 1 as $n \to \infty$, giving that $\Sigma_{n,ij} \to \Sigma_{ij}$ w.p. 1 as $n \to \infty.$

\section{Proof of Theorem~\prettyref{Sthm:geom_erg_logistic}}
\label{subsec:geom_erg_logistic}

Without loss of generality, we assume $\tau^2 = 1$. The posterior distribution for this Bayesian logistic regression model is,
\begin{align*}
\label{eq:post}
  f(\beta | y, x) & \propto f(\beta) \prod_{i=1}^{K} f(y_i|x_i, \beta)\\
  & \propto e^{-\frac{1}{2} \beta^T\beta} \prod_{i=1}^{K} \left( \dfrac{1}{1 + e^{-x_i \beta}} \right)^{y_i} \left(\dfrac{e^{-x_i \beta}}{1 + e^{-x_i \beta}}   \right)^{1-y_i}. \numberthis
\end{align*}
For simpler notation we will use $f(\beta)$ to denote the posterior density. Note that the posterior has a moment generating function.

Consider a random walk Metropolis-Hastings algorithm with a multivariate normal proposal distribution to sample from the posterior $f(\beta)$. We will use the following result to establish geometric ergodicity of this Markov chain.

\begin{theorem}[\cite{jarn:hans:2000}]
\label{jarn_and_hans}

Let $m(\beta) = \nabla f(\beta) / \|\nabla f(\beta)\|$ and also let $n(\beta) = \beta / \|\beta\|$. Suppose $f$ on $\mathbb{R}^p$ is super-exponential in that it is positive and has continuous first derivatives such that
\begin{equation} \label{eq:jh1}
\lim_{\|\beta\| \to \infty} n(\beta) \cdot \nabla \log f(\beta) = - \infty \, . 
\end{equation}
 In addition let the proposal distribution be bounded away from 0 in some region around zero. If
 \begin{equation} \label{eq:jh2}
\limsup_{\|\beta\| \to \infty} n(\beta) \cdot m(\beta) < 0\, ,
\end{equation}
 then the random walk Metropolis-Hastings algorithm is geometrically ergodic.  \end{theorem}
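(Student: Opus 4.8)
The plan is to establish geometric ergodicity by verifying a geometric drift (Foster--Lyapunov) condition together with a minorization on compact sets, and then invoking the standard drift-and-minorization characterization of geometric ergodicity for $\psi$-irreducible aperiodic chains (Meyn--Tweedie). Write $P$ for the Metropolis--Hastings kernel, $q$ for the symmetric proposal increment density, and $\alpha(\beta, \beta') = \min\{1, f(\beta')/f(\beta)\}$ for the acceptance probability. I would take the Lyapunov function to be $V_s(\beta) = \exp(s\|\beta\|)$ for a sufficiently small $s>0$ and aim to show
\[
\limsup_{\|\beta\| \to \infty} \frac{PV_s(\beta)}{V_s(\beta)} < 1,
\]
which, together with boundedness of $PV_s/V_s$ on compacts, yields $PV_s \le \lambda V_s + b\,\mathbf{1}_C$ for some $\lambda < 1$, $b<\infty$, and a compact set $C$.

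The first ingredient is the minorization step, which is the easier part. Since $f$ is positive and continuous and the proposal density is bounded away from $0$ on a neighbourhood of the origin, on any compact set $C$ the sub-probability kernel $\beta \mapsto \alpha(\beta,\beta')q(\beta'-\beta)$ is uniformly bounded below, giving a one-step minorization $P(\beta,\cdot) \ge \eta\,\nu(\cdot)$ for $\beta \in C$. This simultaneously delivers $\psi$-irreducibility and aperiodicity and shows that every compact set, equivalently every sublevel set of $V_s$, is small.

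The core of the argument is the drift estimate. Because $V_s$ is unchanged on rejection, the drift admits the clean representation
\[
\frac{PV_s(\beta)}{V_s(\beta)} - 1 = \int_{\mathbb{R}^p} \left[e^{s(\|\beta+z\|-\|\beta\|)} - 1\right]\min\!\left\{1, \frac{f(\beta+z)}{f(\beta)}\right\} q(z)\,dz,
\]
and I would split the domain into inward increments $I(\beta) = \{z : \|\beta+z\| < \|\beta\|\}$ and outward increments $O(\beta) = \{z : \|\beta+z\| \ge \|\beta\|\}$. On $O(\beta)$ the bracket is positive, but the super-exponential condition \eqref{eq:jh1} forces $f(\beta+z)/f(\beta) \to 0$ for fixed outward $z$, so the acceptance factor damps this positive contribution. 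On $I(\beta)$ the bracket is negative, and the task is to show that a non-vanishing fraction of inward increments is accepted with probability close to $1$, so that these contribute a strictly negative amount in the limit. Here the curvature condition \eqref{eq:jh2} is essential: since $\limsup n(\beta)\cdot m(\beta) < 0$, the accepted set, which to leading order is the gradient half-space $\{z : z\cdot\nabla f(\beta) > 0\}$, keeps a uniformly positive overlap with the inward cone $\{z : z\cdot n(\beta) < 0\}$, so the inward-and-accepted region retains proposal mass $q(z)\,dz$ bounded below as $\|\beta\|\to\infty$. Choosing $s$ small enough that the exponential gain on outward moves cannot overturn this balance then yields the required $\limsup < 1$.

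The main obstacle is precisely this drift estimate, and within it the uniform control of the interplay between the acceptance ratio and the Lyapunov ratio as $\|\beta\| \to \infty$. The delicate points will be (i) making the heuristic ``accepted set $\approx$ gradient half-space'' rigorous, including the behaviour near the boundary $\{f(\beta+z)=f(\beta)\}$ where $\alpha$ transitions away from $1$, and (ii) obtaining the limits uniformly over radial directions $n(\beta)$ rather than along a fixed increment $z$. This is where both hypotheses must be used jointly: super-exponentiality alone controls the radial decay of $f$, but the curvature condition is what rules out the degenerate scenario in which the accepted inward cone shrinks to negligible proposal mass. Once the drift and minorization are both in hand, geometric ergodicity follows immediately from the drift theorem.
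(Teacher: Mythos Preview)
The paper does not prove this theorem; it is quoted verbatim from \cite{jarn:hans:2000} and used as a black box. The proof that immediately follows in the supplement is a proof of Theorem~\ref{Sthm:geom_erg_logistic}, not of the cited result: it simply verifies the two hypotheses \eqref{eq:jh1} and \eqref{eq:jh2} for the specific logistic-regression posterior and then invokes the quoted theorem. So there is no ``paper's own proof'' to compare against.

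That said, your sketch is broadly the right strategy for the Jarner--Hansen result itself---drift plus small-set minorization via Meyn--Tweedie---but one point of departure is the Lyapunov function. Jarner and Hansen take $V_s(\beta) = f(\beta)^{-s}$ for small $s\in(0,1)$ rather than $\exp(s\|\beta\|)$. For a super-exponential target these are not interchangeable, and the choice $f^{-s}$ is what makes the bookkeeping tractable: the drift integrand becomes $(f(\beta+z)/f(\beta))^{-s}\min\{1,f(\beta+z)/f(\beta)\}$, which is a function of the single ratio $f(\beta+z)/f(\beta)$ and is automatically bounded by $1$ on the rejection region. With $\exp(s\|\beta\|)$ you would additionally need to control how fast $f$ decays relative to $e^{-\|\beta\|}$, which the hypotheses do not directly quantify, so the ``choose $s$ small enough'' step in your outward-moves argument is not obviously closable. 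Your use of \eqref{eq:jh2}---to keep the inward-and-accepted cone of uniformly positive proposal mass---and your minorization argument are both in line with the original.
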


\begin{proof}[Proof of Theorem~\prettyref{Sthm:geom_erg_logistic}]
Note that the multivariate normal proposal distribution $q$ is indeed bounded away from zero in some region around zero. We will first show that $f$ is super-exponential.  It is easy to see that $f$ has continuous first derivatives and is positive.  Next we need to establish \eqref{eq:jh1}.  From \eqref{eq:post} we see that
\begin{align*}
& \log f(\beta) \\
& = \text{const} -\dfrac{1}{2} \beta^T \beta - \ds \sum_{i=1}^{K} y_i \log(1 + e^{-x_i \beta}) - \ds \sum_{i=1}^{K} (1-y_i) x_i \beta - \ds \sum_{i=1}^{K} (1-y_i) \log (1 + e^{-x_i \beta})\\
& = \text{const} -\dfrac{1}{2} \beta^T \beta - \ds \sum_{i=1}^{K} \log(1 + e^{-x_i \beta}) - \ds \sum_{i=1}^{K} (1-y_i) x_i \beta\\
& = \text{const} -\dfrac{1}{2} \ds \sum_{j = 1}^{p} \beta_j^2 - \ds \sum_{i=1}^{K} \log(1 + e^{-\sum_{j=1}^{p} x_{ij}\beta_j}) - \ds \sum_{i=1}^{K} (1-y_i)\ds \sum_{j=1}^{p} x_{ij}\beta_j\, .
\end{align*}
For $l=1,\ldots, p$
\begin{align*}
\dfrac{\partial \log f(\beta)}{\partial \beta_l} & = -\beta_l + \ds \sum_{i=1}^{K} \dfrac{x_{il} e^{-x_i \beta}}{1 + e^{-x_i \beta}} - \ds \sum_{i=1}^{K} (1 - y_i) x_{il}
\end{align*}
and
\begin{align*}
\beta \cdot \nabla \log f(\beta) & = \ds \sum_{j=1}^{p}\left[ - \beta_j^2 + \ds \sum_{i=1}^{K} x_{ij}\beta_j \dfrac{e^{-x_i \beta}}{1 + e^{-x_i \beta}} - \ds \sum_{i=1}^{K} (1-y_i) x_{ij} \beta_j  \right]\\
& = -\|\beta\|^2 + \ds \sum_{i=1}^{K} x_i \beta \dfrac{e^{-x_i \beta}}{1 + e^{-x_i \beta}} - \ds \sum_{i=1}^{K} (1 - y_i) x_i \beta \, .
\end{align*}
Hence
\begin{align*}
\dfrac{\beta}{\|\beta\|}\cdot \nabla \log f(\beta) & = - \|\beta\| + \ds \sum_{i=1}^{K} \dfrac{x_i \beta}{\|\beta\|} \dfrac{e^{-x_i \beta}}{1 + e^{-x_i\beta}} - \ds \sum_{i=1}^{K} (1-y_i)\dfrac{x_i \beta}{\|\beta\|}.
\end{align*}
Taking the limit with $\|\beta\| \to \infty$ we obtain
\begin{align*}
\lim_{\|\beta\| \to \infty} \dfrac{\beta}{\|\beta\|} \cdot \nabla \log f(\beta) &  = - \lim_{\|\beta\| \to \infty} \|\beta\| + \lim_{\|\beta\| \to \infty} \ds \sum_{i=1}^{K} \dfrac{x_i \beta}{\|\beta\|} \dfrac{e^{-x_i \beta}}{1 + e^{-x_i\beta}}\\
& \quad - \lim_{\|\beta\| \to \infty} \ds \sum_{i=1}^{K} (1-y_i)\dfrac{x_i \beta}{\|\beta\|}. \numberthis \label{eq:lim_eq} 
\end{align*}
By the Cauchy-Schwarz inequality we can bound the second term
\begin{align*}
\lim_{\|\beta\| \to \infty} \ds \sum_{i=1}^{K} \dfrac{x_i \beta}{\|\beta\|} \dfrac{e^{-x_i \beta}}{1 + e^{-x_i\beta}} & \leq \lim_{\|\beta\| \to \infty} \ds \sum_{i=1}^{K} \dfrac{|x_i| \|\beta\|}{\|\beta\|} \dfrac{e^{-x_i \beta}}{1 + e^{-x_i\beta}} \leq \ds \sum_{i=1}^{K} |x_{i}|. \numberthis \label{eq:2ndterm} 
\end{align*}
For the third term we obtain
\begin{align*}
\lim_{\|\beta\| \to \infty} \ds \sum_{i=1}^{K} (1-y_i)\dfrac{x_i \beta}{\|\beta\|} & = \lim_{\|\beta\| \to \infty} \ds \sum_{i=1}^{K} (1-y_i)\dfrac{\sum_{j=1}^{p} x_{ij} \beta_j}{\|\beta\|} \\
& = \ds \sum_{i=1}^{K}(1 - y_i) \ds \sum_{j=1}^{p} \lim_{\|\beta\| \to \infty} \dfrac{x_{ij} \beta_j}{\|\beta\|} \\
& \geq \ds \sum_{i=1}^{K}(1 - y_i) \ds \sum_{j=1}^{p} \lim_{\|\beta\| \to \infty} \dfrac{-|x_{ij}| |\beta_j|}{\|\beta\|}\\
& \geq - \ds \sum_{i=1}^{K}(1 - y_i) \ds \sum_{j=1}^{p} \lim_{\|\beta\| \to \infty} |x_{ij}| \quad \quad \text{Since $|\beta_j| \leq \|\beta\|$}\\
& = - \ds \sum_{i=1}^{K}(1- y_i) \|x_{i}\|_1 \numberthis \label{eq:3rdterm}.
\end{align*}
Using \eqref{eq:2ndterm} and \eqref{eq:3rdterm} in \eqref{eq:lim_eq}.
\begin{align*}
\lim_{\|\beta\| \to \infty} \dfrac{\beta}{\|\beta\|} \cdot \nabla \log f(\beta) & \leq - \lim_{\|\beta\| \to \infty} \|\beta\| + \ds \sum_{i=1}^{K} |x_{i}| + \ds \sum_{i=1}^{K}(1- y_i) \|x_{i}\|_1 = - \infty\, .
\end{align*}
Next we need to establish~\eqref{eq:jh2}.  Notice that 
\begin{align*}
f(\beta)& \propto \exp \left[ - \dfrac{1}{2} \ds \sum_{j=1}^{p} \beta_j^2 - \ds \sum_{i=1}^{K} (1-y_i) \sum_{j=1}^{p} x_{ij}\beta_j - \ds \sum_{i=1}^{K} \log(1 + e^{- \sum_{j=1}^{p}x_{ij}\beta_j})  \right] := e^{C(\beta)} 
\end{align*}
and hence for $l=1,\ldots,p$
\begin{align*}
\dfrac{\partial f(\beta)}{\partial \beta_l} & = e^{C(\beta)} \left[ - \beta_l - \ds \sum_{i=1}^{K} (1-y_i) x_{il} - \ds \sum_{i=1}^{K} \dfrac{-x_{il} e^{-x_i\beta}}{1 + e^{-x_i\beta} } \right].
\end{align*}
In order to show the result, we will need evaluate
\[ \lim_{\|\beta\| \to \infty}\dfrac{  e^{C(\beta)} \|\beta\|}{\|\nabla f(\beta) \|}. \]
To this end, we will first show that
\[\lim_{\|\beta \| \to \infty} \dfrac{\| \nabla f(\beta) \|^2}{e^{2C(\beta)} \|\beta \|^2} = 1. \]
We calculate that
\begin{align*}
&\|\nabla f(\beta) \|^2\\
& = e^{2C(\beta)} \ds \sum_{j=1}^{p} \left[ - \beta_j - \ds \sum_{i=1}^{K} (1-y_i) x_{ij} + \ds \sum_{i=1}^{K} \dfrac{x_{ij} e^{-x_i\beta}}{1 + e^{-x_i\beta} } \right]^2  \\
& = e^{2C(\beta)}  \ds \sum_{j=1}^{p}  \left[ \left(   \ds \sum_{i=1}^{K} \dfrac{x_{ij} e^{-x_i\beta}}{1 + e^{-x_i\beta} } \right)^2 + \beta_j^2 + \left( \ds \sum_{i=1}^{K} (1-y_i) x_{ij} \right)^2 + 2 \ds \sum_{i=1}^{K} (1 - y_i) x_{ij} \beta_j \right.\\
& \quad  \left. - 2\left( \beta_j + \ds \sum_{i=1}^{K} (1 - y_i) x_{ij}  \right) \left(\ds \sum_{i=1}^{K} x_{ij} \dfrac{e^{-x_i \beta}}{1 + e^{-x_i \beta}}   \right) \right]\\
& = e^{2C(\beta)} \left[ \ds \sum_{j=1}^{p} \left(   \ds \sum_{i=1}^{K} \dfrac{x_{ij} e^{-x_i\beta}}{1 + e^{-x_i\beta} } \right)^2 + \|\beta\|^2 + \ds \sum_{j=1}^{p} \left( \ds \sum_{i=1}^{K}(1-y_i)x_{ij}  \right)^2 + 2\ds \sum_{i=1}^{K} (1-y_i)x_i \beta \right.\\
& \quad \left. -2\ds \sum_{i=1}^{K} x_i \beta \dfrac{e^{-x_i\beta}}{1 + e^{-x_i\beta} } - 2 \ds \sum_{j=1}^{p} \left( \ds \sum_{i=1}^{K}(1-y_i)x_i \right) \left( \ds \sum_{i=1}^{K} x_{ij} \dfrac{e^{-x_i\beta}}{1 + e^{-x_i\beta}} \right) \right]\\
& = e^{2C(\beta)} \|\beta\|^2 \left[ \dfrac{1}{\|\beta\|^2} \ds \sum_{j=1}^{p} \left(   \ds \sum_{i=1}^{K} \dfrac{x_{ij} e^{-x_i\beta}}{1 + e^{-x_i\beta} } \right)^2 + 1 + \dfrac{1}{\|\beta\|^2} \ds \sum_{j=1}^{p} \left( \ds \sum_{i=1}^{K}(1-y_i)x_{ij}  \right)^2  \right.\\
& \quad \left.+ 2 \ds \sum_{i=1}^{K} (1-y_i) \dfrac{x_i \beta}{\|\beta\|^2}  -2\ds \sum_{i=1}^{K} \dfrac{x_i \beta}{\|\beta\|^2} \dfrac{e^{-x_i\beta}}{1 + e^{-x_i\beta} }  \right.\\
& \quad \left. - 2 \dfrac{1}{\|\beta\|^2} \ds \sum_{j=1}^{p} \left( \ds \sum_{i=1}^{K}(1-y_i)x_i \right) \left( \ds \sum_{i=1}^{K} x_{ij} \dfrac{e^{-x_i\beta}}{1 + e^{-x_i\beta}} \right) \right]
\end{align*}
and
\begin{align*}
& \dfrac{\|\nabla f(\beta)\|^2}{e^{2C(\beta)} \|\beta\|^2} \\
& = \dfrac{1}{\|\beta \|}\left[ \dfrac{1}{\|\beta\|} \ds \sum_{j=1}^{p} \left(   \ds \sum_{i=1}^{K} \dfrac{x_{ij} e^{-x_i\beta}}{1 + e^{-x_i\beta} } \right)^2  + \dfrac{1}{\|\beta\|} \ds \sum_{j=1}^{p} \left( \ds \sum_{i=1}^{K}(1-y_i)x_{ij}  \right)^2 + 2 \ds \sum_{i=1}^{K} (1-y_i) \dfrac{x_i \beta}{\|\beta\|}  \right.\\
& \quad \left. -2\ds \sum_{i=1}^{K} \dfrac{x_i \beta}{\|\beta\|} \dfrac{e^{-x_i\beta}}{1 + e^{-x_i\beta} } - 2 \dfrac{1}{\|\beta\|} \ds \sum_{j=1}^{p} \left( \ds \sum_{i=1}^{K}(1-y_i)x_i \right) \left( \ds \sum_{i=1}^{K} x_{ij} \dfrac{e^{-x_i\beta}}{1 + e^{-x_i\beta}} \right) \right] + 1. \numberthis \label{eq:nablaf}  
\end{align*}
Since $\lim_{\|\beta\| \to \infty} \|\beta\|^{-1} \to 0$, it is left to show that the term in the square brackets is bounded in the limit. Since $y_i$ and $x_{ij}$ are independent of $\beta$, and $e^{-x_i\beta}/(1 + e^{-x_i\beta})$ bounded below by 0 and above by 1, it is only required to show that the third and fourth terms in the square brackets remain bounded in the limit. From \eqref{eq:3rdterm} and the Cauchy-Schwarz inequality
\begin{align*}
- \ds \sum_{i=1}^{K}(1- y_i) ||x_{i}||_1  \leq  2 \lim_{\|\beta\| \to \infty} \ds \sum_{i=1}^{K} (1-y_i) \dfrac{x_i \beta}{\|\beta\|} \leq 2\ds \sum_{i=1}^{K} (1- y_i) |x_i| \, .
\end{align*}
In addition, 
\begin{align*}
  \ds \sum_{i=1}^{K} \dfrac{x_i \beta}{\|\beta\|^2} \dfrac{e^{-x_i\beta}}{1 + e^{-x_i\beta} } & \geq - \ds \sum_{i=1}^{K} \sum_{j=1}^{p} \dfrac{|x_{ij}||\beta_j|}{\|\beta\|} \dfrac{e^{-x_i\beta}}{1 + e^{-x_i\beta} }\\
  & \geq - \ds \sum_{i=1}^{K} \sum_{j=1}^{p} \dfrac{|x_{ij}||\beta_j|}{\|\beta\|}\\
  & \geq - \ds \sum_{i=1}^{K} ||x_i||_1.
\end{align*}
Thus, by the above result and \eqref{eq:2ndterm},
\begin{align*}
- \ds \sum_{i=1}^{K} ||x||_1  \leq \lim_{\|\beta\| \to \infty} \ds \sum_{i=1}^{K} \dfrac{x_i \beta}{\|\beta\|} \dfrac{e^{-x_i\beta}}{1 + e^{-x_i\beta} } \leq \ds \sum_{i=1}^{K} |x_{i}|.
\end{align*}
Using these results in \eqref{eq:nablaf},
\begin{align*}
& \lim_{\|\beta\| \to \infty} \dfrac{\|\nabla f(\beta)\|^2 }{e^{2C(\beta)} \|\beta\|^2} \\
& = 1 + \lim_{\|\beta\| \to \infty} \dfrac{1}{\|\beta \|}\left[ \dfrac{1}{\|\beta\|} \ds \sum_{j=1}^{p} \left(   \ds \sum_{i=1}^{K} \dfrac{x_{ij} e^{-x_i\beta}}{1 + e^{-x_i\beta} } \right)^2  + \dfrac{1}{\|\beta\|} \ds \sum_{j=1}^{p} \left( \ds \sum_{i=1}^{K}(1-y_i)x_{ij}  \right)^2\right.\\
& \quad + 2 \ds \sum_{i=1}^{K} (1-y_i) \dfrac{x_i \beta}{\|\beta\|}   -2\ds \sum_{i=1}^{K} \dfrac{x_i \beta}{\|\beta\|} \dfrac{e^{-x_i\beta}}{1 + e^{-x_i\beta} }\\
& \quad  \left. - 2 \dfrac{1}{\|\beta\|} \ds \sum_{j=1}^{p} \left( \ds \sum_{i=1}^{K}(1-y_i)x_i \right) \left( \ds \sum_{i=1}^{K} x_{ij} \dfrac{e^{-x_i\beta}}{1 + e^{-x_i\beta}} \right) \right]\\
& = 1\, .
\end{align*}
Next observe that
\begin{align*}
\beta \cdot \nabla f(\beta) & = e^{C(\beta)}\ds \sum_{j=1}^{p} \left[-\beta_j^2 - \ds \sum_{i=1}^{K} (1 - y_i)x_{ij} \beta_j + \ds \sum_{i=1}^{K} x_{ij} \beta_j \dfrac{e^{-x_i \beta}}{1 + e^{-x_i \beta}}   \right]\\
& = e^{C(\beta)} \left[ -\|\beta\|^2 - \ds \sum_{i=1}^{K} (1 - y_i)x_i \beta + \ds \sum_{i=1}^{K} x_i \beta \dfrac{e^{-x_i \beta}}{1 + e^{-x_i \beta}}  \right]\,.
\end{align*}
and hence
\begin{align*}
\dfrac{\beta}{\|\beta\|}  \dfrac{\nabla f(\beta)}{\| \nabla f(\beta)\|} &= \dfrac{e^{C(\beta)}}{\|\nabla f(\beta) \|} \left[ -\|\beta\| - \ds \sum_{i=1}^{K} (1-y_i) \dfrac{x_i \beta}{\|\beta\|} + \ds \sum_{i=1}^{K} \dfrac{x_i \beta}{\|\beta\|} \dfrac{e^{-x_i \beta}}{1 + e^{-x_i \beta}}    \right]\, .
\end{align*}
We conclude that
\begin{align*}
\lim_{\|\beta\| \to \infty} \dfrac{\beta}{\|\beta\|}  \dfrac{\nabla f(\beta)}{\| \nabla f(\beta)\|} & = \lim_{\|\beta\| \to \infty} \dfrac{e^{C(\beta)} \|\beta\|}{\|\nabla f(\beta) \|} \left[ -1 - \ds \sum_{i=1}^{K} (1-y_i) \dfrac{x_i \beta}{\|\beta\|^2} + \ds \sum_{i=1}^{K} \dfrac{x_i \beta}{\|\beta\|^2} \dfrac{e^{-x_i \beta}}{1 + e^{-x_i \beta}}    \right]\\
& = -1 
\end{align*}
which establishes \eqref{eq:jh2}.
\end{proof}

\section{Univariate Termination Rules}
\label{sec:uni_term_rules}
In this section we formally present the univariate termination rules we implement in Section~\prettyref{Ssec:examples} of the main document. Recall that for the $i$th component of $\theta$, $\theta_{n,i}$ is the Monte Carlo estimate for $\theta_i$, and $\sigma_i^2$ is the asymptotic variance in the univariate CLT. Let $\sigma_{n,i}^2$ be the univariate batch means estimator of $\sigma_i^2$ and let $\lambda_{n,i}^2$ be the sample variance for the $i$th component.

Common practice is to terminate simulation when \textit{all} components satisfy a termination rule. We focus on the relative standard deviation fixed-width sequential stopping rules. Due to multiple testing, a Bonferroni correction is often used. We will refer to the uncorrected method as the uBM method and the corrected method as the uBM-Bonferroni. To create $100(1-\alpha)\%$ univariate confidence intervals, the relative standard deviation fixed-width rule terminates at the random time,
\begin{equation*}
  \inf\left\{n \geq 0 : \dfrac{1}{\lambda_{n,i}} \left(2t_*\dfrac{\sigma_{n,i}} {\sqrt{n}} + \epsilon_i\lambda_{n,i} I(n < n^*) +\dfrac{1}{n} \right)  \leq \epsilon_i  \quad \text{for all } i = 1, \dots, p\right\}\;,
\end{equation*}
where for uncorrected intervals $t_* =  t_{1-\alpha/2, a_n-1}$ and for Bonferroni corrected intervals $t_* =  t_{1-\alpha/2p, a_n-1}$. See \cite{fleg:gong:2015} for more details.

\begin{remark}
If it is unclear which features of $F$ are of interest \textit{a priori}, i.e., $g$ is not known before simulation, one can implement Scheffe's simultaneous confidence intervals. This method can also be used to create univariate confidence intervals for arbitrary contrasts \textit{a posteriori}. Scheffe's simultaneous intervals are boxes on the coordinate axes that bound the confidence ellipsoids. For all  $a^T\theta$, where $a \in \mathbb{R}^p$,
\[ a^T \theta_n \pm \sqrt{a^T \Sigma_n a  ~~ \dfrac{p (a_n - 1)}{(a_n - p)} F_{1-\alpha, p, a_n-p} } \]
will have simultaneous coverage probability $1 - \alpha$. Since we generally are not interested in all possible linear combinations, Scheffe's intervals are often too conservative. In fact, if the number of  confidence intervals is less than $p$, Scheffe's intervals are more conservative than Bonferroni corrected intervals.
\end{remark}

\section{Computational Cost}
The multivariate termination rule is naturally more expensive than the univariate termination rule, but in our experience, the time to check the termination criterion is insignificant compared to the time it takes for obtaining more samples. Also note that there are cases when it is almost impossible to meet the  termination criterion in machine time using univariate methods. This was demonstrated for the Bayesian dynamic spatial-temporal example.

We present computational time to termination for the Bayesian logistic and the VAR(1) models. For the VAR(1) model, we keep the eigen-structure for $\Phi$ as the same as in Section~\prettyref{Ssec:var} of the main document, but let $p$ = 50. Samples from both of the MCMC processes are obtained fairly quickly due to the inexpensive structure. Calculating the multivariate estimators for the VAR(1) model will take more time since we are estimating a $50 \times 50$ matrix. However, since one component mixes slowly, termination  by univariate methods is delayed.

In Table \ref{tab:logistic_time} we present mean computational time to termination using mBM and uBM-Bonferroni over 100 replications for two values of the precision $\epsilon$ (we make 90\% confidence regions). Time to termination is significantly lower for mBM methods, although the difference might not be practically significant. It is important to note that in the Bayesian logistic regression sampler, all the components seem to be mixing equally, so termination is not delayed due to that reason.

\begin{table}[h]
\footnotesize 
  \caption{\footnotesize  \label{tab:logistic_time}Bayesian Logistic: Computation time in seconds for termination using multivariate and univariate techniques. $ b_n  = \lfloor n^{1/2}\rfloor$.}
\begin{center}

  \begin{tabular}{|l|cc|}
  \hline
 & mBM & uBM-Bonferroni  \\ \hline
$\epsilon = .02$  &    42.16 {\tiny (0.308)}  &  43.30  {\tiny (0.327)}\\
$\epsilon = .01$  &   161.83 {\tiny (0.708)}  &  165.18  {\tiny (0.731)}\\
  \hline
  \end{tabular}
  \end{center}
\end{table}

In Table \ref{tab:var_time} we present mean computational time to termination for the VAR(1) model with $p = 50$ over 100 replications. Here, the Bonferroni correction and one slow mixing component both lead to delayed univariate termination, and large $p$ causes mBM calculation to be slower than univariate methods. However, we see significant gains in computational time using multivariate termination rules.

\begin{table}[h]
\footnotesize 
  \caption{ \footnotesize \label{tab:var_time}VAR(1): Computation time in seconds for termination using multivariate and univariate techniques. $ b_n  = \lfloor n^{1/3}\rfloor$.}

\begin{center}
  \begin{tabular}{|l|cc|}
  \hline
 & mBM & uBM-Bonferroni  \\ \hline
$\epsilon = .02$  &    40.56 {\tiny (0.044)}  &  58.11  {\tiny (0.061)}\\
$\epsilon = .01$  &   172.91 {\tiny (0.826)}  &  251.04  {\tiny (1.477)}\\
  \hline
  \end{tabular}
  \end{center}
\end{table}

Both these examples were in a way best case scenario for the univariate methods because obtaining samples is cheap for both examples. Even then we see multivariate termination methods require less computation time for the same level of precision $\epsilon$.

\section{Sensitivity to Batch Size}

We explore the finite sample properties of using large and small batch sizes for the VAR example. Table \ref{tab:batch_var}  has coverage probabilities over 1000 replications for different choices of tolerance level $\epsilon$ in the relative volume sequential stopping rule. Generally a smaller $\epsilon$ is chosen so as to ensure reasonable coverage probabilities.

The difference in the coverage probabilities is minimal for $\epsilon
\leq .05$ between the three batch sizes. Although, for larger
$\epsilon$, the batch size has a greater impact. However, for large batch sizes, the variability in the estimation of $\Sigma$ is larger since less number of batches are available for estimation. Figure \ref{fig:eigen} shows the estimated density of the largest estimated eigenvalue over 1000 replications.
\begin{figure}[h]
  \centering
  \includegraphics[width = 3in]{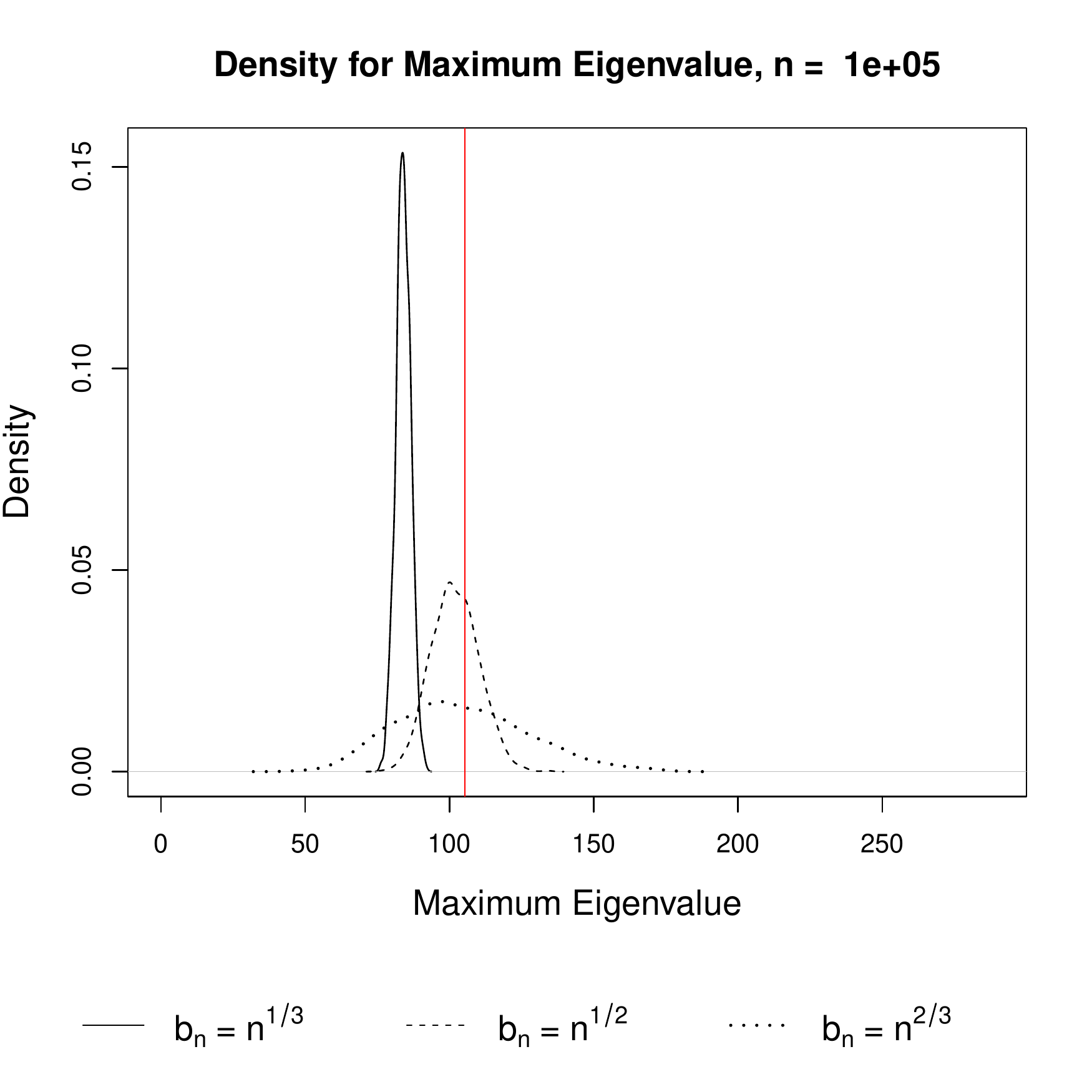}
  \caption{\footnotesize Var: Replications = 1000. Density of the largest estimated eigenvalue of $\Sigma$ using mBM estimator for three batch sizes.}
  \label{fig:eigen}
\end{figure}
Thus, for large batch sizes, the estimation is less biased, however, the variability in the estimates is considerably larger. The opposite phenomenon is witnessed for small batch sizes.  But since the Monte Carlo sample size is large enough, the batch size of $b_n = \lfloor n^{1/2} \rfloor$ has low bias and low variability.

\begin{table}[h]
\footnotesize
  \caption{\label{tab:batch_var}VAR Example. Coverage probability of 90\% confidence regions using mBM. Replications = 1000. $n^* = 1000$.}
\begin{center}
  \begin{tabular}{c|ccccc}
  \hline
 $b_n, \epsilon$  &  0.20   &  0.10   &   0.05   &   0.02   &   0.01 \\ \hline
$\lfloor n^{1/3} \rfloor$ &  0.812  \tiny{(0.0124)}     &    0.869  \tiny{(0.0107)}    &    0.886   \tiny{(0.0101)}   &     0.883   \tiny{(0.0102)}    &    0.900   \tiny{(0.0095)} \\
$\lfloor n^{1/2} \rfloor$  &  0.884  \tiny{(0.0101)}  &   0.898  \tiny{(0.0096)}  &  0.911   \tiny{(0.0090)}  &  0.894   \tiny{(0.0097)} &   0.909   \tiny{(0.0091)} \\
$\lfloor n^{2/3} \rfloor$  &  0.877  \tiny{(0.0104)}  &  0.905  \tiny{(0.0093)}  &  0.912   \tiny{(0.0090)}   & 0.894   \tiny{(0.0097)}  &  0.904   \tiny{(0.0093)} \\ \hline
  \end{tabular}
  \end{center}
\end{table}

\end{document}